\newcommand{\Z}{\mathbb{Z}}
\newcommand{\R}{\mathbb{R}}
\newcommand{\C}{\mathbb{C}}
\renewcommand{\AA}{\mathcal{A}}
\newcommand{\Sc}{\mathcal{S}}
\newcommand{\GL}{\mathrm{GL}}
\newcommand{\SL}{\mathrm{SL}}
\newcommand{\SO}{\mathrm{SO}}
\newcommand{\Sp}{\mathrm{Sp}}
\newcommand{\Rep}{\mathrm{Rep}}
\newcommand{\Irr}{\mathrm{Irr}}
\newcommand{\unit}{\mathrm{unit}}
\newcommand{\gp}{\mathrm{gp}}
\newcommand{\temp}{\mathrm{temp}}
\newcommand{\Ind}{\mathrm{Ind}}
\newcommand{\Jac}{\mathrm{Jac}}
\newcommand{\Frob}{\mathrm{Frob}}
\newcommand{\scusp}{\mathrm{scusp}}
\newcommand{\soc}{\mathrm{soc}}
\newcommand{\MVW}{\mathrm{MVW}}
\newcommand{\good}{\mathrm{good}}
\newcommand{\bad}{\mathrm{bad}}
\newcommand{\ugly}{\mathrm{ugly}}
\newcommand{\Cusp}{\mathscr{C}}
\newcommand{\Rr}{\mathscr{R}}
\newcommand{\iif}{&\quad&\text{if }}
\newcommand{\other}{&\quad&\text{otherwise}}
\newcommand{\resp}{resp.~}
\renewcommand{\1}{\mathbf{1}}
\newcommand{\rel}{\rightsquigarrow}
\newcommand{\pair}[1]{\left\langle #1 \right\rangle}
\newcommand{\half}[1]{\frac{#1}{2}}
\newtheorem{thm}{Theorem}[section]
\newtheorem{lem}[thm]{Lemma}
\newtheorem{prop}[thm]{Proposition}
\newtheorem{cor}[thm]{Corollary}
\newtheorem{rem}[thm]{Remark}
\newtheorem{defi}[thm]{Definition}
\newtheorem{alg}[thm]{Algorithm}
\numberwithin{equation}{section}
\title[The explicit Zelevinsky--Aubert duality]
{The explicit Zelevinsky--Aubert duality}
\author{Hiraku Atobe \and Alberto M{\'i}nguez}
\date{}
\subjclass[2010]{Primary 22E50; Secondary 11S37}
\keywords{Zelevinsky--Aubert duality; Derivatives; Socles}
\address{
Department of Mathematics, Hokkaido University,
Kita 10, Nishi 8, Kita-Ku, Sapporo, Hokkaido, 060-0810, Japan 
}
\email{
atobe@math.sci.hokudai.ac.jp
}
\address{
University of Vienna, 
Fakult{\"a}t f{\"u}r Mathematik,
Oskar-Morgenstern-Platz 1,
1090 Wien
}
\email{
alberto.minguez@univie.ac.at
}
\begin{document}
\maketitle

\begin{abstract}
In this paper, we give an explicit computable algorithm for the Zelevinsky--Aubert duals 
of irreducible representations of $p$-adic symplectic and odd special orthogonal groups. 
To do this, we establish explicit formulas for certain derivatives and socles. 
We also give a combinatorial criterion for the irreducibility of certain parabolically induced representations. 
\end{abstract}

\tableofcontents

\section{Introduction}
Let $F$ be a local non-Archimedean field. 
In 1980, A. Zelevinsky \cite{Z} defined an involution $\tau \mapsto \hat\tau$
on the Grothendieck group of finite length smooth representations of $\GL_n(F)$ and conjectured that this involution should preserve irreducibility.
Assuming this conjecture, in 1986, M{\oe}glin--Waldspurger \cite{MW2} studied the involution 
and gave an algorithm for computing the Langlands (or Zelevinsky) data of $\hat\tau$, for every irreducible representation $\tau$ of $\GL_n(F)$. 
Later, another explicit formula was given by Knight--Zelevinsky \cite{KZ}.
\vskip 10pt

Motivated by the Alvis--Curtis duality for finite groups \cite{A1, A2, Curtis}, 
S.-I. Kato \cite{Kato} defined an involution on the Grothendieck group of smooth finite length Iwahori-fixed representations of a split reductive group over $F$. 
In 1996, A.-M. Aubert showed that 
Kato's involution could be extended to the category of finite length smooth representations of any reductive group $G$ 
and proved that it indeed preserves irreducibility. 
Besides, using different approaches, 
Schneider--Stuhler \cite{SS} and Bernstein--Bezrukavnikov--Kazhdan \cite{Ber, BBK, Bez} 
have defined involutions on the category of smooth representations of $G$. 
For irreducible representations of $\GL_n(F)$, 
all these involutions coincide (up to the contragredient and up to a sign) with the involution defined by Zelevinsky.
\vskip 10pt

For simplicity, when restricted to the set of irreducible smooth representations of a reductive group $G$, 
this involution is commonly known as the \emph{Zelevinsky--Aubert duality} 
and it is the main topic of this article. 
This duality has many interesting applications to Koszul duality (see \cite{MR}) 
or to the Langlands program (see for example \cite{T2} or \cite{W}). 
One important property of the Zelevinsky--Aubert duality is that it does not preserve the fact of being tempered. 
In fact, in the proof of Arthur's local classification, 
the first step beyond tempered representations is to consider the Zelevinsky--Aubert dual of tempered representations \cite[\textsection 7]{Ar}. 
However, one expects that the duality preserves unitarity 
so it should be an important tool for determining the unitary dual of classical groups \cite{T1}. 
\vskip 10pt

Our goal is to extend the result of M{\oe}glin--Waldspurger to the Zelevinsky--Aubert duality, 
that is we give an algorithm for computing the Langlands data of $\hat\pi$ in terms of those of $\pi$, for every irreducible representation $\pi$ of $G$. 
As we will use the endoscopic classification of Arthur \cite{Ar} 
and M{\oe}glin's construction of the local packets \cite{Moe3}, 
we will focus on the case where $F$ is a local non-Archimedean field of characteristic $0$ 
and $G$ is either a symplectic or an odd special orthogonal group.
\vskip 10pt

There have been several attempts to explicitly describe the Zelevinsky--Aubert duality. 
There are some partial results by M{\oe}glin \cite{Moe4}, Mati{\'c} \cite{Ma1,Ma2}, 
Jantzen \cite{J-dual} and the first-named  author \cite{At2}. 
In order to explain the novelties of the present article, let us introduce some notation.
\vskip 10pt

Let $G$ be a connected algebraic reductive group defined over $F$.
Fix a minimal parabolic subgroup $P_0$ of $G$. 
We denote by $\Ind_P^G$ the normalized parabolic induction 
and by $\Jac_P$ its left adjoint functor, the Jacquet functor.
\par

Let $\Pi$ be a smooth finite length representation of $G$. 
We consider the virtual semisimple representation
\[
D_{G}(\Pi) = \sum_{P}(-1)^{\dim A_M} \left[\Ind_P^{G}(\Jac_{P}(\Pi))\right], 
\]
where $P = MN$ runs over all standard parabolic subgroups of $G$, 
and $A_M$ is the maximal split torus of the center of $M$.
Then Aubert \cite{Au} showed that 
if $\pi$ is irreducible, 
then there exists a sign $\epsilon \in \{\pm1\}$ 
such that $\hat{\pi} = \epsilon \cdot D_{G}(\pi)$ is also an irreducible representation. 
We call the map $\pi \mapsto \hat\pi$ the \emph{Zelevinsky--Aubert duality}. 
\par

It satisfies the following important properties: 
\begin{enumerate}
\item
The  dual of $\hat\pi$ is equal to $\pi$, i.e., the map $\pi \mapsto \hat\pi$ is an involution.
\item
If $\pi$ is supercuspidal, then $\hat\pi = \pi$.
\item The duality commutes with Jacquet functors (see \eqref{eq:Jacquet}).
\end{enumerate}
\vskip 10pt

Let us now restrict ourselves to the case where $G=G_n$ is 
either the split special orthogonal group $\SO_{2n+1}(F)$ or the symplectic group $\Sp_{2n}(F)$ of rank $n$. 
In this case, 
when $\pi$ (\resp $\tau_i$) is a smooth representation of $G_{n_0}$ (\resp $\GL_{d_i}(F)$), 
with $d_1+\cdots+d_r+n_0=n$,
we denote by
\[
\tau_1 \times \dots \times \tau_r \rtimes \pi 
\]
the normalized parabolically induced representation of $\tau_1 \boxtimes \dots \boxtimes \tau_r \boxtimes \pi$ 
from the standard parabolic subgroup $P$ of $G_n$ 
with Levi subgroup isomorphic to $\GL_{d_1}(F) \times \dots \times \GL_{d_r}(F) \times G_{n_0}$. 
\par

Given $\pi$ an irreducible representation of $G_n$ and a supercuspidal \emph{non self-dual} representation $\rho$ of $\GL_d(F)$ there exists a unique $k\geq 0$ and a unique irreducible representation $\pi_0$ of $G_{n_0}$, with $n=dk+n_0$ such that:
\begin{itemize}
\item $\pi$ is a unique irreducible subrepresentation of 
\begin{equation}\label{eq:subr}
\underbrace{\rho \times\dots\times \rho}_{k \text{ times}}\rtimes \pi_0.
\end{equation}
\item $k$ is maximal, in the sense that for every irreducible representation $\pi'_0$ of $G_{n_0-d}$, $\pi_0$ is not a subrepresentation of $\rho \rtimes \pi'_0$.
\end{itemize}
We call $\pi_0$ the \emph{highest $\rho$-derivative} of $\pi$ and denote it by $D_\rho^{\max}(\pi)$. An important consequence of the commutativity of the Zelevinsky--Aubert duality with Jacquet functors is that 
\begin{equation}\label{eq:derAu}
D_{\rho}^{\max}(\pi){\widehat{\;}} = D_{\rho^\vee}^{\max}(\hat\pi),
\end{equation}
where $\rho^\vee$ denotes the contragredient of $\rho$.

\vskip 15pt
We can now describe the main idea of the algorithm for explicating the Zelevinsky--Aubert dual of an irreducible representation $\pi$ of $G_n$. 
It is a two-step procedure:
\vskip 10pt

\begin{description}
\item[Step 1]
If there exists a supercuspidal non self-dual representation $\rho$ of $\GL_d(F)$ 
such that $D_\rho^{\max}(\pi) \neq \pi$, 
then we give an explicit formula of the Langlands data of $D_\rho^{\max}(\pi)$ in terms of those of $\pi$. 
By induction we can compute the Langlands data of $D_{\rho}^{\max}(\pi){\widehat{\;}} $. 
We finally give an explicit formula of the Langlands data of $\hat\pi$ 
in terms of those of $D_{\rho}^{\max}(\pi){\widehat{\;}} = D_{\rho^\vee}^{\max}(\hat\pi)$.

\item[Step 2] 
Assume finally that for all supercuspidal representation $\rho$ of $\GL_d(F)$, 
such that 
$\pi$ is a subrepresentation of $\rho \times \pi_0$ for some irreducible representation $\pi_0$ of $G_{n-d}$, 
we have that $\rho$ is self-dual. 
Then:
\begin{itemize}
\item If $\pi$ is tempered, 
then $\pi$ is ``almost supercuspidal'', 
and we can compute its Zelevinsky--Aubert dual explicitly
(see \S \ref{sec.temp}, especially Proposition \ref{temp}).

\item 
If $\pi$ is not tempered, 
then we show that there exists a supercuspidal self-dual representation $\rho$ of $\GL_d(F)$ 
such that $\pi$ is a unique irreducible subrepresentation of
\[
\underbrace{\Delta_\rho[0,-1] \times\dots\times \Delta_\rho[0,-1]}_{k \text{ times}}\rtimes \pi_0,
\]
for some irreducible representation  $\pi_0$ of $G_{n_0}$, with $n=2dk+n_0$, 
and some positive integer $k$ maximal as above, 
where $\Delta_\rho[0,-1]$ is a Steinberg representation (see Paragraph \ref{rep0} for a precise definition). 
We call $\pi_0$ the \emph{highest $\Delta_\rho[0,-1]$-derivative}, 
and denote it $D_{\Delta_\rho[0,-1]}^{\max}(\pi)$. 
Similar to \eqref{eq:derAu}, this derivative satisfies a formula 
\[
D_{\Delta_\rho[0,-1]}^{\max}(\pi){\widehat{\;}} = D_{Z_\rho[0,1]}^{\max}(\hat\pi), 
\]
where $D_{Z_\rho[0,1]}^{\max}(\hat\pi)$ is the \emph{highest $Z_\rho[0,1]$-derivative} of $\hat\pi$ 
(see Paragraph \ref{other-derivatives}). 
As in \emph{Step 1}, this allows us to compute by induction the Zelevinsky--Aubert dual of $\pi$. 
The precise algorithm is explained in Section \ref{sec:algo}. 
\end{itemize} 
\end{description}

Let us first  give a remark on the self-duality condition on $\rho$.
When $\rho$ is self-dual, a representation of the form \eqref{eq:subr} may have several irreducible subrepresentations and there is no simple way of distinguishing them. The same problem was already observed by Jantzen \cite{J-dual}. 
For these reasons he just considered what is called the half-integral case. 
\par

This also explains one of the differences between the case of $\GL_n(F)$ and the case of classical groups that we treat in this article. In the former case, induced representations of the form $\rho \times \pi_0$, with $\rho$ supercuspidal, have always a unique irreducible subrepresentation. The second difference is that for  $\GL_n(F)$, it is much easier to explicate the Langlands data of this subrepresentation in terms of those of $\pi$. However, the most intricate part of this article is  to explicitly describe, in terms of Langlands data, the correspondence  $\pi \leftrightarrow D_{\tau}^{\max}(\pi)$ for $\tau$ either supercuspidal non self-dual or of the form $Z_\rho[0,1]$, 
see Theorems \ref{soc-positive},  \ref{bad-soc-positive} and \ref{der-01}. 
To explicate these formulas we use matching functions as in \cite{LM} 
and $A$-parameters.  These results are interesting on their own. 
In particular, we get a combinatorial criterion for the irreducibility of parabolically induced representations 
of the form $\rho \rtimes \pi_0$ with $\rho$ non self-dual supercuspidal and $\pi_0$ irreducible, see Corollary \ref{irr_cusp}. 
We expect that the explicit formulas established in this paper 
will make M{\oe}glin's construction of $A$-packets more computable.
\vskip 10pt

This paper is organized as follows. 
In Section \ref{sec:Not}, we recall some general results on representation theory of $p$-adic classical groups. 
In Section  \ref{sec:deriv}, we define $\rho$-derivatives and other derivatives, 
and we prove some general result about them, in particular their compatibility with the Zelevinsky--Aubert duality. 
In Section \ref{sec:algo} we give our algorithm to compute the Zelevinsky--Aubert dual using derivatives and socles. 
We will prove explicit formulas for these derivatives and socles in several situations 
in Sections \ref{sub:best}, \ref{sec:pos} and \ref{sec01}. 
To do this, we review Arthur's theory of endoscopic classification in Section \ref{endoscopy} 
and the theory of matching functions at the beginning of Section \ref{sub:best}.

\subsection*{Acknowledgement}
We would like to thank Erez Lapid and Colette M{\oe}glin for useful discussions.
The first-named author was supported by JSPS KAKENHI Grant Number 19K14494.

\section{Notation and preliminaries}\label{sec:Not}
In this section we introduce notation, in particular the functors of induction and restriction, Tadi\'c's formula and  Jantzen decomposition.
\subsection{Notation}
Throughout this article, 
we fix a non-Archimedean locally compact field $F$  of characteristic zero with normalized absolute value $|\cdot|$. 
Let $G$ be the group of $F$-points of a connected reductive group defined over $F$, with the usual topology.
We will only consider smooth representations of $G$, 
that is, representations such that the stabilizer of every vector is an open subgroup of $G$ 
and we write $\Rep(G)$ for the category of smooth complex representations of $G$ of finite length. 
Denote by $\Irr(G)$ the set of equivalence classes of irreducible objects of $\Rep(G)$. 
Let $\Rr(G)$ be the Grothendieck group of $\Rep(G)$. 
The canonical map from the objects of $\Rep(G)$ to $\Rr (G)$ will be denoted by $\pi\mapsto[\pi]$.
\par

For $\pi, \pi' \in\Rep(G)$ we write $\pi \hookrightarrow \pi'$ (\resp $\pi \twoheadrightarrow \pi'$) 
if there exists an injective (\resp surjective) morphism from $\pi$ to $\pi'$.
\par

Fix a minimal $F$-parabolic subgroup $P_0$ of $G$. A parabolic subgroup $P$ of $G$ will be called \emph{standard} if it contains $P_0$. 
Henceforth, the letter $P$ will always denote a standard parabolic subgroup of $G$ 
with an implicit standard Levi decomposition $P=MU$. Let $\Sigma$ denote the set of roots of $G$ with respect to $P_0$ and let $\Delta$ be a basis of $\Sigma$. For $\Theta \subset \Delta$ let $P_\Theta$ denote the standard parabolic subgroup of $G$ corresponding to $\Theta$ and let $M_\Theta$ be a corresponding standard Levi subgroup. Let $W$ be the Weyl group of $G$.

Let $\tau$ be a representation of $M$, regarded as a representation of $P$ on which $U$ acts trivially.
We denote by $\Ind_P^G\tau$, the representation of $G$ parabolically induced from $\tau$. 
(We will always mean the normalized induction.)
We view $\Ind_P^G$ as a functor. 
Its left adjoint, the Jacquet functor with respect to $P$, will be denoted by $\Jac_P$.
\par

An irreducible representation $\pi$ of $G$ is called \emph{supercuspidal} 
if it is not a composition factor of any representation of the form
$\Ind^{G}_P(\tau)$ with $P$ a proper parabolic subgroup of $G$ and $\tau$ a representation of $M$. 
We  write $\Cusp(G)$ for the subset of  $\Irr(G)$ made of supercuspidal representations.
For any $\pi \in \Rep(G)$, we denote by $\pi^\vee$ the contragredient of $\pi$.
(The sets $\Irr(G)$ and $\Cusp(G)$ are invariant under $^\vee$.)
\par

Let $\Pi$ be a smooth representation of $G$ of finite length. 
The \emph{socle} of $\Pi$ is the largest semisimple subrepresentation of $\Pi$. 
It is denoted by $\soc(\Pi)$.
We say that $\Pi$ is \emph{socle irreducible (SI)} 
if $\soc(\Pi)$ is irreducible and occurs with multiplicity one in $[\Pi]$. 

\subsection{The Zelevinsky--Aubert duality}\label{sec:Au}
We consider the map
\begin{align*}
D_{G} \colon \Rr(G) & \longrightarrow  \Rr(G) \\
\pi &\mapsto  \sum_{P}(-1)^{\dim A_M} \left[\Ind_P^{G}(\Jac_{P}(\pi))\right],
\end{align*}
where $P = MN$ runs over all standard parabolic subgroups of $G$.
Then Aubert \cite{Au} showed that 
if $\pi$ is irreducible, 
then there exists a sign $\epsilon \in \{\pm1\}$ 
such that $\hat{\pi} = \epsilon \cdot D_{G}(\pi)$ is also an irreducible representation. 
We call the map 
\begin{align*}
\Irr(G) & \rightarrow  \Irr(G) \\
\pi& \mapsto \hat\pi
\end{align*}
the \emph{Zelevinsky--Aubert duality}. 
\par

It satisfies the following important properties: 
\begin{enumerate}
\item
For any $\pi \in  \Irr(G)$, the  dual of $\hat\pi$ is equal to $\pi$, 
that is, the map $\pi \mapsto \hat\pi$ is an involution \cite[Th{\'e}or{\`e}me 1.7 (3)]{Au}.
\item
If $\pi\in \Cusp(G)$, then $\hat\pi = \pi$ \cite[Th{\'e}or{\`e}me 1.7 (4)]{Au}.
\item Let $\Theta \subset \Delta$ and consider the standard parabolic subgroup $P=P_\Theta$ with Levi decomposition $P=MN$. Let $w_0$ be the longest element in the set $\{w\in W\;|\;w^{-1}(\Theta)>0\}$ and let $P'$ be the standard parabolic with Levi subgroup $M'=w^{-1}(M)$. Then  we have (\textit{cf.} \cite[Th{\'e}or{\`e}me 1.7.(2)]{Au}):
\begin{equation}\label{eq:Jacquet}
\Jac_P\circ D_G= {\rm Ad}(w_0)\circ D_{M'} \circ \Jac_{P'}.
\end{equation}
\end{enumerate}

\subsection{Representations of general linear groups}\label{rep0}
Set $\Irr^\GL \coloneqq \cup_{n\ge0}\Irr (\GL_n(F))$ 
and let $\Cusp^\GL \subset \Irr^\GL$ be the subset of supercuspidal representations of $\GL_n(F)$ for every $n>0$. 
We denote $\Rr^\GL \coloneqq \oplus_{n \geq 0} \Rr(\GL_n(F))$.
\par

Let $d_1, \dots, d_r$ be some positive integers. 
Let  $\tau_i \in \Rep(\GL_{d_i}(F))$ for $1 \leq i \leq r$. 
It is customary to denote the normalized parabolically induced representation by 
\[
\tau_1 \times \dots \times \tau_r \coloneqq \Ind_{P}^{\GL_{k_1+\dots+k_r}(F)}(\tau_1 \boxtimes \dots \boxtimes \tau_r). 
\]
This product  induces a $\Z$-graded ring structure on $\Rr^\GL$. 
We denote the multiplication by $m$. 
If $\tau_1 = \dots = \tau_r = \tau$, we will write $\tau^r = \tau \times \dots \times \tau$ ($r$ times).
\par

The Jacquet functor for $\GL_{m}(F)$ along the maximal parabolic subgroup $P_{(d,m-d)}$ 
with Levi isomorphic to $\GL_{d}(F) \times \GL_{m-d}(F)$
is denoted by $\Jac_{(d,m-d)}$. It induces a co-multiplication, that is a ring homomorphism
\begin{align*}
m^\ast \colon \Rr^\GL &\longrightarrow \Rr^\GL\otimes \Rr^\GL \notag\\
\tau &\mapsto \sum_{n \geq 0} \left(\sum_{n_1+n_2=n}\left[\Jac_{(n_1,n_2)}(\tau)\right]\right)
\end{align*}
We finally set
\[
M^\ast \colon \Rr^\GL \longrightarrow \Rr^\GL\otimes \Rr^\GL 
\]
to be the composition $M^\ast = (m\otimes 1) \circ (\cdot^\vee\otimes m^\ast) \circ s \circ m^\ast$, 
where $s \colon \Rr^\GL\otimes \Rr^\GL \to \Rr^\GL\otimes \Rr^\GL $ denotes 
the transposition $s(\sum_i \tau_i\otimes \tau'_i)=\sum_i \tau'_i\otimes \tau_i$.
\par

If $\tau \in \Irr^\GL$, there exist $\rho_1, \dots, \rho_r \in \Cusp^\GL$ 
such that $\tau$ is a subrepresentation of $\rho_1 \times \dots\times \rho_r $. 
The set $\scusp(\pi) \coloneqq \{\rho_1, \dots, \rho_r \}$ is uniquely determined by $\pi$ 
and is called the \emph{supercuspidal support} of $\tau$.
\par

For  $\pi \in \Rep(\GL_n(F))$ and a character $\chi$ of $F^\times$, 
we denote by $\pi\cdot\chi$ the representation obtained from $\pi$ by twisting by the character $\chi \circ \det$. 
If $\rho \in \Cusp^\GL$, we denote by $\Z_\rho = \{\rho|\cdot|^a \;|\; a\in \Z\}$ \emph{the line of $\rho$}.
\par

A \emph{segment} $[x,y]_\rho$ is a sequence of supercuspidal representations of the form
\[
\rho|\cdot|^{x} ,\rho|\cdot|^{x-1} ,\dots ,\rho|\cdot|^{y},
\]
where $\rho \in \Cusp^\GL$ and $x,y \in \R$ with $x-y \in \Z$ and $x \geq y$.
\par

One can associate to a segment $[x,y]_\rho$ two irreducible representations of $\GL_{d(x-y+1)}(F)$. 
We denote by $\Delta_{\rho}[x,y]$ the \emph{Steinberg representation} of $\GL_{d(x-y+1)}(F)$, i.e., 
the unique irreducible subrepresentation of 
\[
\rho|\cdot|^{x} \times \rho|\cdot|^{x-1} \times \dots \times \rho|\cdot|^{y}, 
\]
and we also write $Z_\rho[y,x] $ for its unique irreducible quotient. 
For example, when $\rho = \1_{\GL_1(F)}$, we have $Z_\rho[-(n-1)/2,(n-1)/2] = \1_{\GL_n(F)}$. 
\par

The Steinberg representation  $\Delta_{\rho}[x,y]$ is an essentially discrete series 
and all essentially discrete series are of this form \cite[Theorem 9.3]{Z}. 
By convention, we set $\Delta_\rho[x,x+1]= Z_\rho[x+1,x]$ to be the trivial representation of the trivial group $\GL_0(F)$.
\par

If the segments $[x_1,y_1]_{\rho_1}, \dots, [x_r,y_r]_{\rho_r}$ satisfy that 
$x_i \geq y_i$ and $x_1+y_1 \leq \dots \leq x_r+y_r$, 
then the socle 
\[
L(\Delta_{\rho_1}[x_1,y_1], \dots, \Delta_{\rho_r}[x_r,y_r]) 
\coloneqq \soc(\Delta_{\rho_1}[x_1,y_1] \times \dots \times \Delta_{\rho_r}[x_r,y_r])
\]
is irreducible.
When $\rho_1 = \dots = \rho_r$, 
$x_1 < \dots < x_r$, $y_1 < \dots < y_r$ and $x_1 \equiv \dots \equiv x_r \bmod \Z$, 
we call it a \emph{ladder representation}. 
As a special case, when $x_i = x_1+i-1$ and $y_i = y_1+i-1$ for $1 \leq i \leq r$, 
the ladder representation $L(\Delta_\rho[x_1,y_1], \dots, \Delta_\rho[x_r,y_r])$ 
is also called a \emph{Speh representation}.
\par

The Jacquet modules of $\Delta_\rho[x,y]$ and $Z_\rho[y,x]$ are given by
\begin{align*}
\Jac_{(d,d(x-y))}(\Delta_{\rho}[x,y]) &= \rho|\cdot|^x \boxtimes \Delta_\rho[x-1,y], \\
\Jac_{(d,d(x-y))}(Z_{\rho}[y,x]) &= \rho|\cdot|^y \boxtimes Z_\rho[y+1,x], 
\end{align*}
respectively (see \cite[Propositions 3.4, 9.5]{Z}). 
For Jacquet modules of ladder representations, see \cite[Theorem 2.1]{KL}. 
\par

\subsection{Representations of classical groups}\label{rep}
In this paper, we let $G_n$ be either the split special orthogonal group $\SO_{2n+1}(F)$ 
or the symplectic group $\Sp_{2n}(F)$ of rank $n$. 
Set $\Irr^{G} \coloneqq \cup_{n \geq 0} \Irr (G_n)$ and $\Rr^G \coloneqq \oplus_{n \geq 0} \Rr(G_{n})$, 
where the union and the direct sum are taken over groups of the same type. 
Let $\Cusp^G \subset \Irr^G$ be the subset of supercuspidal representations of $G_n$ for every $n \geq 0$ of the same type. 
\par

Fix a rational Borel subgroup of $G_n$. 
Let $P$ be the standard parabolic subgroup of $G_n$ with Levi subgroup isomorphic to
$\GL_{d_1}(F) \times \dots \times \GL_{d_r}(F) \times G_{n_0}$. 
Let $\pi\in \Rep(G_{n_0})$ and let $\tau_i \in \Rep(\GL_{d_i}(F))$ for $1 \leq i \leq r$. 
We denote the normalized parabolically induced representation by
\[
\tau_1 \times \dots \times \tau_r \rtimes \pi 
\coloneqq 
\Ind_{P}^{G_n}(\tau_1 \boxtimes \dots \boxtimes \tau_r \boxtimes \pi).
\]
As in the case of general linear groups, 
the Jacquet functors give rise, at the level of Grothendieck groups, to a map
\begin{align*}
\mu^\ast \colon \Rr^G &\longrightarrow \Rr^\GL\otimes \Rr^G, \\
\Rr(G_n) \ni \pi &\longmapsto \sum_{k=0}^{n} \left[\Jac_{P_k}(\pi)\right], 
\end{align*}
where $P_k$ is the standard parabolic subgroup of $G_n$ 
with Levi subgroup isomorphic to $\GL_{k}(F) \times G_{n-k}$. 
The Geometric Lemma at the level of Grothendieck groups is commonly known in this case 
as \emph{Tadi{\'c}'s formula}.
\begin{prop}[Tadi{\'c}'s formula \cite{T}]\label{eq:Tadic}
For $\tau \in \Rr^{\GL}$ and $\pi \in \Rr^G$, we have
\[
\mu^\ast(\tau \rtimes \pi)=M^\ast(\tau)\rtimes\mu^\ast(\pi).
\]
\end{prop}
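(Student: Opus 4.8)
The plan is to deduce this from the Bernstein--Zelevinsky geometric lemma; this is the classical argument of Tadi\'c \cite{T}, which I will only outline. Since $\mu^\ast$, $M^\ast$, $m^\ast$ and the pairing $\rtimes$ are all $\Z$-bilinear, it suffices to treat $\tau\in\Irr^\GL$ and $\pi\in\Irr^G$. I would then reduce to the case where $\tau$ is a single essentially square-integrable representation: by \cite{Z}, $\Rr^\GL$ is spanned as an abelian group by the classes of products $\Delta_{\rho_1}[x_1,y_1]\times\cdots\times\Delta_{\rho_s}[x_s,y_s]$, and writing $\Delta_i=\Delta_{\rho_i}[x_i,y_i]$, the transitivity of parabolic induction gives $\Delta_1\times\cdots\times\Delta_s\rtimes\pi\cong\Delta_1\rtimes(\Delta_2\times\cdots\times\Delta_s\rtimes\pi)$. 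Combining this with the fact that $M^\ast=(m\otimes 1)\circ(\cdot^\vee\otimes m^\ast)\circ s\circ m^\ast$ is a ring homomorphism (a composite of ring homomorphisms, $m^\ast$ being one by construction) and with the fact that $\rtimes$ makes $\Rr^\GL\otimes\Rr^G$ a module over the ring $\Rr^\GL\otimes\Rr^\GL$ via $(a\otimes b)\rtimes(c\otimes d)=(a\times c)\otimes(b\rtimes d)$, an easy induction on $s$ reduces the identity to the case $s=1$, that is, $\tau=\Delta_\rho[x,y]$ and $\pi\in\Rr^G$ arbitrary; bilinearity lets me take $\pi\in\Irr^G$ here as well.

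For this base case I would write $\tau\rtimes\pi=\Ind_P^{G_n}(\tau\boxtimes\pi)$, where $\tau$ is a representation of $\GL_d(F)$ and $P$ is the standard maximal parabolic subgroup with Levi $\GL_d(F)\times G_{n_0}$, and compute $[\Jac_{P_k}(\tau\rtimes\pi)]$ for each $0\le k\le n$ by the geometric lemma, which presents it as a sum over double cosets $W_{M_k}\backslash W_{G_n}/W_M$ ($M$, $M_k$ being the Levi subgroups of $P$, $P_k$), the term attached to a coset being an induced representation of $\GL_k(F)\times G_{n-k}$ built from a Jacquet module of $\tau$ and a Jacquet module of $\pi$. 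Because $G_n$ is symplectic or odd special orthogonal, these cosets have an explicit shape: a coset amounts to a threefold splitting of a Jacquet module of $\tau$, which for a segment reads $\tau\rightsquigarrow\Delta_\rho[x,p]\boxtimes\Delta_\rho[p-1,q]\boxtimes\Delta_\rho[q-1,y]$, together with a splitting $\pi\rightsquigarrow\sigma\boxtimes\pi'$ occurring in $\mu^\ast(\pi)$; in the associated term the ``top'' piece $\Delta_\rho[x,p]$ of the $\GL_d$-block stays on the $\GL_k$-side, the ``bottom'' piece $\Delta_\rho[q-1,y]$ is carried to the $\GL_k$-side after applying $\cdot^\vee$ (it is reflected across to the opposite unipotent radical), the ``middle'' piece $\Delta_\rho[p-1,q]$ is absorbed into the $G_{n-k}$-factor, while $\sigma$ and $\pi'$ contribute to the $\GL_k$- and $G_{n-k}$-parts. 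Summing over all $k$ and all cosets and comparing with the definitions of $M^\ast$ and of $\rtimes$, one reads off exactly $M^\ast(\tau)\rtimes\mu^\ast(\pi)$.

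The hard part will be that middle step --- writing the double cosets $W_{M_k}\backslash W_{G_n}/W_M$ down explicitly and verifying that the representation attached to each one is the corresponding summand of $M^\ast(\tau)\rtimes\mu^\ast(\pi)$, with the correct multiplicity and with $\cdot^\vee$ applied to exactly the right factor. Conceptually, the contragredient appearing in $M^\ast$ is nothing but the effect of this reflection of part of the $\GL_d$-block across the long-root directions of $G_n$; if one replaces $G_n$ by a general linear group there is no reflection and the identity degenerates to the tautology that $m^\ast$ is a ring homomorphism, which is why the classical-group case genuinely requires the combinatorial bookkeeping.
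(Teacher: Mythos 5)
The paper does not prove this proposition; it is quoted from Tadi\'c \cite{T}, so there is no internal argument to compare against. Your outline is the standard proof and its logical skeleton is sound: the reduction by bilinearity, the observation that $M^\ast$ is a ring homomorphism (this uses that $\Rr^\GL$ is commutative and that $\cdot^\vee$ and $m^\ast$ are ring homomorphisms, so $m\otimes 1$ and the other factors all are), the module structure of $\Rr^\GL\otimes\Rr^G$ over $\Rr^\GL\otimes\Rr^\GL$, and the induction on the number of $\GL$-blocks are all correct, and they do reduce the identity to the case of a maximal parabolic $P$ with Levi $\GL_d(F)\times G_{n_0}$. (The further reduction to $\tau$ a single segment is harmless but unnecessary: the geometric-lemma computation for a maximal parabolic works for arbitrary $\tau\in\Rep(\GL_d(F))$, with the splittings of $\tau$ encoded by $m^\ast(\tau)$ rather than written out segment by segment.) Your description of the base case also has the right shape: the minimal double-coset representatives in $W_{M_k}\backslash W_{G_n}/W_M$ for $\Sp_{2n}$ or $\SO_{2n+1}$ are parametrized by a threefold partition of the $\GL_d$-block (top part to $\GL_k$, bottom part reflected and dualized to $\GL_k$, middle part absorbed into $G_{n-k}$) together with the size of the $\GL$-contribution from $\Jac_{P_\bullet}(\pi)$, and matching these against $(m\otimes 1)\circ(\cdot^\vee\otimes m^\ast)\circ s\circ m^\ast$ recovers $M^\ast(\tau)\rtimes\mu^\ast(\pi)$. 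However, as you yourself flag, the decisive step --- exhibiting these representatives, checking that each occurs exactly once, and verifying that the associated subquotient is the corresponding summand with $\cdot^\vee$ landing on precisely the reflected factor --- is asserted rather than carried out; that verification is the actual content of \cite{T}, so as written your text is a correct plan of proof rather than a proof.
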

\par

We will also use the \emph{MVW-functor} \cite{MVW}. It is a covariant functor 
\begin{align*}
\MVW \colon \Rep(G_n) &\longrightarrow \Rep(G_n), \\ 
\Pi &\mapsto \Pi^\MVW
\end{align*}
satisfying the following properties:
\begin{itemize}
\item
if $\pi \in \Irr (G_n)$, then  $\pi^\MVW$ is isomorphic to $\pi^\vee$; 
\item
we have $(\tau \rtimes \pi)^\MVW \cong \tau \rtimes \pi^\MVW$ for any  $\pi\in \Rep(G_{n_0})$ 
and any $\tau \in \Rep(\GL_{d}(F))$ with $n=n_0+d$.
\end{itemize}
\par
The Zelevinsky--Aubert duality extends by linearity to a map $D^G: \Rr^G \rightarrow \Rr^G$. 
With this notation, the compatibility of the duality with Jacquet functors in equation \eqref{eq:Jacquet} stands:
\begin{equation}\label{eq:Jacquet2}
\mu^\ast \circ D^G =  d^G  \circ  \mu^\ast, 
\end{equation}
where
\begin{align*}
d^G \colon  \Rr^\GL \otimes \Rr^G &\longrightarrow \Rr^\GL\otimes \Rr^G \\
\sum_i \tau_i \otimes \pi_i &\longmapsto \sum_i \hat{\tau}_i^\vee \otimes \hat{\pi}_i.
\end{align*}
\par

Let $[x_1,y_1]_{\rho_1}, \dots, [x_r,y_r]_{\rho_r}$ be some segments 
with $\rho_i \in \Cusp(\GL_{d_i}(F))$ being unitary for $1 \leq i \leq r$, 
and let $\pi_\temp$ be an irreducible tempered representation of $G_{n_0}$.
A parabolically induced representation of the form
\[
\Delta_{\rho_1}[x_1,y_1] \times \dots \times \Delta_{\rho_r}[x_r,y_r] \rtimes \pi_\temp 
\]
is called a \emph{standard module}
if $x_1+y_1 \leq \dots \leq x_r+y_r < 0$. 
\par

The Langlands classification says that any standard module is SI, 
and that any irreducible representation $\pi$ of $G_n$ is the unique irreducible subrepresentation of 
a standard module $\Delta_{\rho_1}[x_1,y_1] \times \dots \times \Delta_{\rho_r}[x_r,y_r] \rtimes \pi_\temp$ 
with $n=n_0 + \sum_{i=1}^r d_i(x_i-y_i+1)$, which is unique up to isomorphism. 
For more details, see \cite{K}. 
In this case, we write $\pi = L(\Delta_{\rho_1}[x_1,y_1], \dots, \Delta_{\rho_r}[x_r,y_r]; \pi_\temp)$, 
and refer $(\Delta_{\rho_1}[x_1,y_1], \dots, \Delta_{\rho_r}[x_r,y_r]; \pi_\temp)$ 
as the \emph{Langlands data} of $\pi$.
\par

\subsection{The Jantzen decomposition}\label{sub:Jantzen}
If $\pi\in \Irr(G_n)$, 
there exist $\rho_1, \dots, \rho_r \in \Cusp^\GL$ and $\sigma \in \Cusp^G$ 
such that $\pi$ is a subrepresentation of $\rho_1 \times \dots\times \rho_r \rtimes\sigma$. 
The set 
\[
\scusp(\pi) \coloneqq \{\rho_1, \dots, \rho_r, \rho_1^\vee, \dots, \rho_r^\vee ,\sigma\} 
\]
is uniquely determined by $\pi$ 
and is called the \emph{supercuspidal support} of $\pi$. 
For $\sigma \in \Cusp^G$, we put $\Irr_\sigma \coloneqq \{\pi\in\Irr^G \;|\; \sigma \in \scusp(\pi)\}$.
\par

In this paragraph, we fix a supercuspidal representation $\sigma \in \Cusp^G$.
\begin{defi}\label{def:good}
Let $\rho \in \Cusp^\GL$.
\begin{itemize}
\item
We say $\rho$ is \emph{good} 
if $\Z_\rho = \Z_{\rho^\vee}$ and $\rho' \rtimes \sigma$ is reducible for some $\rho' \in \Z_\rho$. 
\item
We say $\rho$ is \emph{bad} 
if $\Z_\rho = \Z_{\rho^\vee}$ and $\rho' \rtimes \sigma$ is irreducible for all $\rho' \in \Z_\rho$.
\item
We say $\rho$ is \emph{ugly} 
if $\Z_\rho \neq \Z_{\rho^\vee}$.
\end{itemize}
\end{defi}
Every supercuspidal representation is either good, bad or ugly.
\begin{rem}
It is known that
\begin{itemize}
\item
the notions of good and bad are independent of $\sigma$;

\item
if $\rho'|\cdot|^z $ is  good or bad  with $\rho'$ unitary and $z\in\R$, then $\rho'$ is self-dual and $z \in (1/2)\Z$; 

\item
if $\rho'|\cdot|^{z_1}, \rho'|\cdot|^{z_2} $ are both good or both bad, then $z_1-z_2 \in \Z$. 
\end{itemize}
See Remark \ref{rem:good} below. 
\end{rem}

\begin{defi}
\begin{enumerate}
\item
We say two good (\resp bad) supercuspidal representations $\rho, \rho'$ are \emph{line equivalent} 
if $\Z_\rho=\Z_{\rho'}$. 
We denote by $\Cusp^{\good}$ (\resp $\Cusp^{\bad}$) 
a set of representatives of good (\resp bad) representations under this equivalence relation.

\item 
Similarly we say two ugly representations $\rho, \rho'$ are \emph{line equivalent} 
if $\Z_\rho\cup\Z_{\rho^\vee} = \Z_{\rho'}\cup\Z_{\rho^{'\vee}}$. 
We denote by $\Cusp^{\ugly}$ a set of representatives of ugly representations under this equivalence relation.
\end{enumerate}
\end{defi}

\begin{defi}
Let $\pi \in \Irr_\sigma$.
\begin{enumerate}
\item
If 
\[
\scusp(\pi) \subset \left( \bigcup_{\rho \in \Cusp^{\good}} \Z_\rho \right) \cup \{\sigma\}, 
\]
we say that $\pi$ is \emph{of good parity}. 
We write  $\Irr_\sigma^\good$ for the set of such representations.

\item 
If $\scusp(\pi) \subset \Z_\rho \cup \{\sigma\}$ for some bad representation $\rho$, 
we say that $\pi$ is \emph{of bad parity} (or of $\rho$-bad parity if we want to specify $\rho$). 
We write  $\Irr_\sigma^{\rho-\bad}$ for the set of such representations.

\item 
If $\scusp(\pi) \subset (\Z_\rho \cup \Z_{\rho^\vee}) \cup \{\sigma\}$ for some ugly representation $\rho$, 
we say that $\pi$ is \emph{ugly} (or $\rho$-ugly if we want to specify $\rho$). 
We write  $\Irr_\sigma^{\rho-\ugly}$ for the set of such representations.
\end{enumerate}
\end{defi}

Ugly representations are easy to deal with due to the following proposition 
which reduces every problem to a similar problem for general linear groups.
\begin{prop}\label{prop:ugly}
Let $\pi \in \Irr_\sigma^{\rho-\ugly}$. 
Then there exists an irreducible representation $\tau$ of $\GL_m(F)$ with $\scusp(\tau) \subset \Z_\rho$
such that $\pi = \tau \rtimes \sigma$ (irreducible induction). 
\end{prop}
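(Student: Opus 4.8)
The plan is to exploit the fact that an $\rho$-ugly representation lives entirely on the union of two distinct lines $\Z_\rho \cup \Z_{\rho^\vee}$, where no irreducibility phenomenon can occur that is specific to the classical group $G_n$: since $\Z_\rho \neq \Z_{\rho^\vee}$, no twist $\rho'|\cdot|^z$ of $\rho$ is self-dual, so in particular $\rho' \rtimes \sigma'$ is irreducible for every $\rho' \in \Z_\rho \cup \Z_{\rho^\vee}$ and every $\sigma' \in \Cusp^G$. First I would argue by induction on $m$, where $\scusp(\pi) = \{\rho_1,\dots,\rho_r,\rho_1^\vee,\dots,\rho_r^\vee,\sigma\}$ with all $\rho_i \in \Z_\rho \cup \Z_{\rho^\vee}$ and $m = \sum d_i$. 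The base case $m = 0$ is $\pi = \sigma$. For the inductive step, pick a $\rho' \in \Cusp^\GL$ such that $\pi \hookrightarrow \rho' \rtimes \pi_0$ for some irreducible $\pi_0$ of a smaller classical group; such a $\rho'$ exists because $\pi$ is not supercuspidal (as $m>0$), and necessarily $\rho' \in \Z_\rho \cup \Z_{\rho^\vee}$ since $\scusp(\pi_0) \cup \{\rho'\} = \scusp(\pi)$.

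By the inductive hypothesis applied to $\pi_0$, there is an irreducible representation $\tau_0$ of some $\GL_{m'}(F)$ with $\scusp(\tau_0) \subset \Z_\rho$ and $\pi_0 = \tau_0 \rtimes \sigma$ (irreducible induction). We may assume $\rho' \in \Z_\rho$ after possibly replacing $\rho'$ by $\rho'^\vee$: indeed, if $\rho' \in \Z_{\rho^\vee}$, then by the MVW-functor $\pi^\vee = \pi^\MVW \hookrightarrow (\rho' \rtimes \pi_0)^\MVW = \rho' \rtimes \pi_0^\vee$, hence $\pi \hookrightarrow \rho'^\vee \rtimes \pi_0'$ for $\pi_0' = (\pi_0^\vee)^\vee$-type data on $\Z_\rho$; a cleaner route is to note that, working with $\pi^\vee$ in place of $\pi$ throughout and applying $^\vee$ at the end, there is no loss in assuming $\rho' \in \Z_\rho$. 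Then $\pi \hookrightarrow \rho' \rtimes (\tau_0 \rtimes \sigma) = (\rho' \times \tau_0) \rtimes \sigma$, and by Frobenius reciprocity $\Jac_{(d',m')}$-type considerations or directly by general linear group theory, $\soc(\rho' \times \tau_0)$ is a single irreducible $\tau$ (the relevant socle on $\GL$ is irreducible when one factor is supercuspidal, using the subrepresentation version of the Langlands/Zelevinsky classification on $\Z_\rho$), with $\scusp(\tau) = \scusp(\rho') \cup \scusp(\tau_0) \subset \Z_\rho$. So $\pi \hookrightarrow \tau \rtimes \sigma$ for this $\tau$.

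It remains to upgrade the inclusion $\pi \hookrightarrow \tau \rtimes \sigma$ to an equality, i.e., to show $\tau \rtimes \sigma$ is irreducible. Here is where I expect the main obstacle, and the key is a counting/multiplicity argument. On the general linear side, $\scusp(\tau) \subset \Z_\rho$ and $\scusp(\tau^\vee) \subset \Z_{\rho^\vee}$ are disjoint, so $\tau \times \tau^\vee$ has no unexpected subquotients interleaving the two lines; more precisely I would use Tadic's formula (Proposition \ref{eq:Tadic}) to compute $\mu^\ast(\tau \rtimes \sigma)$ and observe that, because every $\rho'' \in \Z_\rho \cup \Z_{\rho^\vee}$ satisfies $\rho'' \rtimes \sigma'$ irreducible for all $\sigma'$, the Jacquet module $\Jac_{P_k}(\tau \rtimes \sigma)$ on each factor is multiplicity-free with each constituent of the form $(\text{const on } \Z_\rho\cup\Z_{\rho^\vee}) \boxtimes \sigma$; this forces every irreducible subquotient $\pi'$ of $\tau \rtimes \sigma$ to satisfy $\Jac$ matching that of $\tau \rtimes \sigma$ in a way that pins down $\pi' \cong \pi$ with multiplicity one. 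Equivalently, one shows $\dim \operatorname{Hom}_{G_n}(\tau \rtimes \sigma, \tau \rtimes \sigma) = 1$ by a Bernstein–Zelevinsky / geometric-lemma computation of the intertwining space, using that all the relevant rank-one parabolic inductions $\rho'' \rtimes \sigma'$ are irreducible (hence the corresponding intertwining operators are isomorphisms and contribute trivially). Either way, $\tau \rtimes \sigma$ is SI with irreducible socle $\pi$ appearing with multiplicity one, and being also (by the same disjointness) isomorphic to its own socle, we conclude $\pi = \tau \rtimes \sigma$.
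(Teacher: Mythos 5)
Your overall strategy is the same as the paper's: produce an irreducible $\tau$ with $\scusp(\tau)\subset\Z_\rho$ such that $\pi$ embeds in $\tau\rtimes\sigma$, and then show that $\tau\rtimes\sigma$ is irreducible. The paper achieves the first step without induction (it embeds $\pi$ into a product of supercuspidals, groups the $\Z_\rho$-part and the $\Z_{\rho^\vee}$-part as $\tau_1\times\tau_2\rtimes\sigma$, proves $\tau_2\rtimes\sigma$ irreducible and only then replaces $\tau_2$ by $\tau_2^\vee$), and it delegates the second step entirely to \cite[Lemma 6.2]{LT}. Your version of both steps has gaps. In the reduction to $\rho'\in\Z_\rho$: the MVW functor is covariant, so $\pi\hookrightarrow\rho'\rtimes\pi_0$ gives $\pi^\vee\cong\pi^{\MVW}\hookrightarrow\rho'\rtimes\pi_0^\vee$, and dualizing produces a surjection $\rho'^\vee\rtimes\pi_0\twoheadrightarrow\pi$, not an injection; and passing to $\pi^\vee$ does not help, since $\scusp(\pi^\vee)=\scusp(\pi)$. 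This is repairable (run the rest of the argument with quotients, or imitate the paper by first establishing irreducibility of the whole $\Z_{\rho^\vee}$-block before swapping it for its contragredient), but as written the step is wrong.

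The more serious gap is the irreducibility of $\tau\rtimes\sigma$, which is the actual content of the proposition. The principle you invoke --- that irreducibility of all rank-one inductions $\rho''\rtimes\sigma'$ forces irreducibility of $\tau\rtimes\sigma$ --- is false: it would apply verbatim in the $\rho$-bad-parity case, where by definition every $\rho'\rtimes\sigma$ with $\rho'\in\Z_\rho$ is irreducible, and yet reducible inductions $\tau\rtimes\sigma$ occur there (this is exactly what Section \ref{sub:posbad} of the paper is about). The claim that $\Jac_{P_k}(\tau\rtimes\sigma)$ is multiplicity-free is likewise unjustified and not the relevant point. What actually makes the ugly case work is the disjointness $\Z_\rho\cap\Z_{\rho^\vee}=\emptyset$: in Tadi{\'c}'s formula the first tensor factor of $M^\ast(\tau)$ carries a contragredient, so every constituent of $\mu^\ast(\tau\rtimes\sigma)$ other than $\tau\boxtimes\sigma$ has a $\GL$-part whose supercuspidal support meets $\Z_{\rho^\vee}$; hence $\tau\boxtimes\sigma$ occurs exactly once in $\Jac_{P_m}(\tau\rtimes\sigma)$, and since it is detected in every nonzero subrepresentation and every nonzero quotient, the socle and cosocle coincide and occur with multiplicity one, forcing irreducibility. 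This is \cite[Lemma 6.2]{LT}, which the paper simply cites; without that lemma or a correct proof of it, your argument does not close.
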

\begin{proof}
We can write 
$\pi \hookrightarrow 
\rho|\cdot|^{x_1} \times \dots \times \rho|\cdot|^{x_r} 
\times
\rho^\vee|\cdot|^{-y_1} \times \dots \times \rho^\vee|\cdot|^{-y_s} 
\rtimes \sigma$
for some $x_i, y_j \in \Z$.
There exist irreducible subquotients $\tau_1$ of $\rho|\cdot|^{x_1} \times \dots \times \rho|\cdot|^{x_r}$
and $\tau_2$ of $\rho^\vee|\cdot|^{-y_1} \times \dots \times \rho^\vee|\cdot|^{-y_s}$ 
such that this inclusion factors through $\pi \hookrightarrow \tau_1 \times \tau_2 \rtimes \sigma$.
As $\rho$ is ugly, we can apply \cite[Lemma 6.2]{LT} to $\tau_2 \rtimes \sigma$, 
and we see that $\tau_2 \rtimes \sigma$ is irreducible. 
Hence $\pi \hookrightarrow \tau_1 \times \tau_2^\vee \rtimes \sigma$. 
Take an irreducible subquotient $\tau$ of $\tau_1 \times \tau_2^\vee$ 
such that $\pi \hookrightarrow \tau \rtimes \sigma$. 
Then by \cite[Lemma 6.2]{LT} again, we conclude that $\tau \rtimes \sigma$ is irreducible.
\end{proof}

\begin{rem}\label{rem:ugly}
More precisely, by the Langlands classification, 
one can take $\tau_1$, $\tau_2$ in the proof of this proposition so that 
\[
\tau_1 = L(\Delta_\rho[x'_1, y'_1], \dots, \Delta_\rho[x'_{r'},y'_{r'}]), 
\quad
\tau_2 = L(\Delta_{\rho^\vee}[x_1'',y_1''], \dots, \Delta_{\rho^\vee}[x''_{r''},y''_{r''}]) 
\]
with $x'_1+y'_1 \leq \dots \leq x'_{r'}+y'_{r'} \leq 0$ and $x''_1+y''_1 \leq \dots \leq x''_{r''}+y''_{r''} \leq 0$.
Then since $\tau_2^\vee = L(\Delta_{\rho}[-y''_{r''},-x''_{r''}], \dots, \Delta_{\rho}[-y''_{1},-x''_{1}])$
and since 
$\pi = \soc(\tau_1 \times \tau_2^\vee \rtimes \sigma) \hookrightarrow \soc(\tau_1 \times \tau_2^\vee) \rtimes \sigma$, 
one can take $\tau$ as
\[
\tau \coloneqq \soc(\tau_1 \times \tau_2^\vee) 
= L(\Delta_\rho[x'_1, y'_1], \dots, \Delta_\rho[x'_{r'},y'_{r'}], \Delta_{\rho}[-y''_{r''},-x''_{r''}], \dots, \Delta_{\rho}[-y''_{1},-x''_{1}]).
\]
\end{rem}
\par

Let $\pi\in \Irr_\sigma$. 
Then Jantzen \cite{J-dec} defines representations 
$\pi^\good \in \Irr_\sigma^\good$, $\pi^{\rho-\bad} \in \Irr_\sigma^{\rho-\bad}$ 
and $\pi^{\rho-\ugly} \in \Irr_\sigma^{\rho-\ugly}$ as follows. 
\begin{itemize}
\item 
$\pi^\good$ is the unique representation in $ \Irr_\sigma^\good$ 
such that $\pi \hookrightarrow \tau \times \pi^\good$ with no good representations in $\scusp(\tau)$.
\item 
If $\rho$ is a bad supercuspidal representation, 
then $\pi^{\rho-\bad}$ is the unique representation in $\Irr_\sigma^{\rho-\bad}$ 
such that $\pi \hookrightarrow \tau \times\pi^{\rho-\bad }$ with $\scusp(\tau) \cap \Z_\rho=\emptyset$.
\item 
If $\rho$ is an ugly supercuspidal representation, 
then $\pi^{\rho-\ugly}$ is the unique representation in $\Irr_\sigma^{\rho-\ugly}$ 
such that $\pi \hookrightarrow \tau \times \pi^{\rho-\ugly}$ 
with $\scusp(\tau) \cap (\Z_\rho\cup\Z_{\rho^\vee}) = \emptyset$.
\end{itemize}

The following theorem is a special case of Jantzen's decomposition.
\begin{thm}[{\cite[Theorem 9.3]{J-dec}}] The map
\begin{align*}
\Psi \colon \Irr_\sigma &\longrightarrow 
\Irr_\sigma^\good 
\sqcup \left(\bigsqcup_{\rho\in \Cusp^{\bad}} \Irr_\sigma^{\rho-\bad}\right)  
\sqcup \left(\bigsqcup_{\rho\in \Cusp^{\ugly}} \Irr_\sigma^{\rho-\ugly}\right), \\
\pi &\longmapsto \left( \pi^\good, \{\pi^{\rho-\bad}\}_\rho, \{\pi^{\rho-\ugly}\}_\rho \right) 
\end{align*}
is bijective. Moreover, it commutes with the Zelevinsky--Aubert duality in the sense:
$$\Psi(\hat\pi)=\left( \widehat{\pi^\good}, \{\widehat{\pi^{\rho-\bad}}\}_\rho, \{ \widehat{\pi^{\rho-\ugly}}\}_\rho \right). $$
\end{thm}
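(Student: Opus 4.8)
The plan is to handle the two assertions in turn. For the bijectivity of $\Psi$ I would simply invoke Jantzen's decomposition \cite[Theorem 9.3]{J-dec}, so I concentrate on the compatibility with the Zelevinsky--Aubert duality, i.e.\ on the equalities $(\hat\pi)^{\good}=\widehat{\pi^{\good}}$, $(\hat\pi)^{\rho-\bad}=\widehat{\pi^{\rho-\bad}}$ and $(\hat\pi)^{\rho-\ugly}=\widehat{\pi^{\rho-\ugly}}$ for all relevant $\rho$. The idea is to check that the representations $\widehat{\pi^{\good}}$, $\widehat{\pi^{\rho-\bad}}$, $\widehat{\pi^{\rho-\ugly}}$ satisfy the defining properties of the corresponding components of $\hat\pi$, and then to conclude via the uniqueness built into those definitions. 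First, since $D_G$ is built out of Jacquet functors and parabolic inductions, neither of which enlarges the supercuspidal support, since $\hat\pi$ occurs in $D_G(\pi)$, and since the duality is an involution, one gets $\scusp(\hat\pi)=\scusp(\pi)$; as $\hat\sigma=\sigma$ and whether a line is good, bad or ugly depends only on that line, on its dual and on $\sigma$, it follows that $\pi$ lies in $\Irr_\sigma^{\good}$ (resp.~in $\Irr_\sigma^{\rho-\bad}$, resp.~in $\Irr_\sigma^{\rho-\ugly}$) if and only if $\hat\pi$ does; in particular $\widehat{\pi^{\good}}\in\Irr_\sigma^{\good}$, $\widehat{\pi^{\rho-\bad}}\in\Irr_\sigma^{\rho-\bad}$ and $\widehat{\pi^{\rho-\ugly}}\in\Irr_\sigma^{\rho-\ugly}$.

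Next I would use the explicit shape of Jantzen's decomposition: there is an irreducible representation $\tau$ of a product of general linear groups, with $\scusp(\tau)$ contained in the union of the bad and ugly lines, of the form $\tau=\tau_{1}\times\cdots\times\tau_{s}$ where each $\tau_i$ has supercuspidal support on a single bad or ugly line $\ell_i$, such that $\pi=\tau\rtimes\pi^{\good}$, such that $\pi^{\rho-\bad}=\tau_i\rtimes\sigma$ when $\ell_i=\Z_\rho$ with $\rho$ bad, and such that $\pi^{\rho-\ugly}=\tau_i\rtimes\sigma$ when $\ell_i=\Z_\rho\cup\Z_{\rho^\vee}$ with $\rho$ ugly (cf.~Proposition \ref{prop:ugly}); moreover the products $\tau_1\times\cdots\times\tau_s$ and $\tau\rtimes\pi^{\good}$ are irreducible, because the lines $\ell_1,\dots,\ell_s$ and the lines occurring in $\pi^{\good}$ are pairwise disjoint. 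Applying the duality and using that $D_G$ commutes with parabolic induction at the level of Grothendieck groups \cite{Au} (for general linear groups this also gives $\widehat{\tau_1\times\cdots\times\tau_s}=\widehat{\tau_1}\times\cdots\times\widehat{\tau_s}$), together with the fact that $\widehat{\tau_1}\times\cdots\times\widehat{\tau_s}\rtimes\widehat{\pi^{\good}}$ is again irreducible (supercuspidal supports being preserved, the disjointness persists), one deduces $\hat\pi=\widehat{\tau_1}\times\cdots\times\widehat{\tau_s}\rtimes\widehat{\pi^{\good}}$. Now fixing an embedding $\widehat{\pi^{\good}}\hookrightarrow T\rtimes\sigma$ with $T\in\Irr^{\GL}$ supported on the good lines, and moving a given factor $\widehat{\tau_i}$ to the right past the others---legitimate since supercuspidals on disjoint lines commute---one obtains $\hat\pi\hookrightarrow\big(\prod_{j\ne i}\widehat{\tau_j}\times T\big)\rtimes(\widehat{\tau_i}\rtimes\sigma)$, the left-hand factor having empty intersection of its supercuspidal support with $\ell_i$; since $\widehat{\tau_i}\rtimes\sigma$ lies in $\Irr_\sigma^{\rho-\bad}$ (resp.~$\Irr_\sigma^{\rho-\ugly}$), uniqueness yields $(\hat\pi)^{\rho-\bad}=\widehat{\tau_i}\rtimes\sigma=\widehat{\pi^{\rho-\bad}}$ (resp.~the ugly analogue). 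Finally the good component is immediate from $\hat\pi=\hat\tau\rtimes\widehat{\pi^{\good}}$, as $\scusp(\hat\tau)=\scusp(\tau)$ contains no good representation.

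The step I expect to be the main obstacle is the passage from $\pi=\tau\rtimes\pi^{\good}$ to $\hat\pi=\hat\tau\rtimes\widehat{\pi^{\good}}$. It reduces to the Grothendieck-group identity furnished by the commutation of $D_G$ with parabolic induction, together with the irreducibility of both the source and the target induction; but one has to be wary that the Zelevinsky--Aubert duality does \emph{not} commute with ``socle of a parabolic induction'' in general---it is precisely the pairwise disjointness of the bad, ugly and good lines that forces the relevant inductions to be irreducible, so that this discrepancy cannot arise, and that lets the Jacquet-module bookkeeping factor as a product over the lines. Establishing that disjointness together with the attendant irreducibility statements---equivalently, that bad- and ugly-parity representations are parabolically induced from general linear groups, and that general linear factors supported on incompatible lines commute past one another and past the classical factor---is where the genuine work lies; all of it is contained in \cite[\S\S 5--9]{J-dec} and in \cite[Lemma 6.2]{LT}, which we invoke.
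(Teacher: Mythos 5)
The paper offers no proof of this statement --- both assertions are imported from Jantzen --- so your argument has to stand on its own. The opening reduction (duality preserves supercuspidal support, hence preserves membership in $\Irr_\sigma^{\good}$, $\Irr_\sigma^{\rho-\bad}$, $\Irr_\sigma^{\rho-\ugly}$) is fine, and transporting the defining property of each component through the duality is a reasonable strategy. The gap is the structural claim on which everything then rests: that $\pi=\tau_1\times\cdots\times\tau_s\rtimes\pi^{\good}$ is an \emph{irreducible} induction with $\pi^{\rho-\bad}=\tau_i\rtimes\sigma$ irreducibly induced, ``because the lines are pairwise disjoint''. Disjointness of lines does give irreducibility of the general linear products $\tau_i\times\tau_j$, and, via \cite[Lemma 6.2]{LT}, of $\tau_i\rtimes\sigma$ when $\ell_i$ is ugly (this is Proposition \ref{prop:ugly}); but a bad line satisfies $\Z_\rho=\Z_{\rho^\vee}$, so there the disjointness argument is vacuous: the potential reducibility of $\tau_i\rtimes\sigma$ comes from the interaction of $\tau_i$ with its own contragredient across the classical group, not from another line. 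That every irreducible representation of $\rho$-bad parity is nonetheless a full irreducible induction from $\sigma$ is a substantive assertion whose proof must use that there is no reducibility point on $\Z_\rho$ (compare the Jacquet-module analysis carried out in the proof of Proposition \ref{bad-special} for special classes of such representations); it is not contained in \cite[Lemma 6.2]{LT}, and you give no argument for it. The same issue recurs when you assert that $\tau\rtimes\pi^{\good}$ and $\hat\tau_1\times\cdots\times\hat\tau_s\rtimes\widehat{\pi^{\good}}$ are irreducible.

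These irreducibility claims are not a technicality. As you yourself note, the duality does not commute with ``socle of a parabolic induction''; the irreducibility is precisely what lets you replace socles by equalities and then apply the Grothendieck-group commutation of $D_G$ with induction. Without it the argument collapses, and one is forced back to the route Jantzen actually takes: characterize each component by the embedding $\pi\hookrightarrow\tau\times\pi^{X}$ with $\scusp(\tau)$ avoiding the lines of $X$, and transport that characterization through the duality via the Jacquet-module identity \eqref{eq:Jacquet2}. To salvage your approach you would need to prove that bad-parity representations are fully induced from $\sigma$, together with a precise commutation lemma guaranteeing the irreducibility of the mixed induction $\tau\rtimes\pi^{\good}$.
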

In practice, this theorem enables us to reduce the problem of making the Zelevinsky--Aubert duality explicit 
to the case where the representation is either ugly or of good or bad parity. 

\section{The theory of $\rho$-derivatives}\label{sec:deriv}
Let $d>0$ be an integer. 
In this section, we fix $\rho \in \Cusp(\GL_d(F))$.
We recall $\rho$-derivatives as in \cite{LT}
and introduce the notions of $\Delta_\rho[0,-1]$-derivative and $Z_\rho[0,1]$-derivative.
One should not confuse these notions with Bernstein--Zelevinsky's notion of derivatives.

\subsection{Definitions}
We treat first the case of general linear groups. 
For $\tau  \in\Rep(\GL_n(F))$,
define semisimple representations $L_\rho^{(k)}(\tau)$ and $R_\rho^{(k)}(\tau)$ of $\GL_{n-dk}(F)$ so that
\begin{align*}
\left[\Jac_{(dk,n-dk)}(\tau)\right] &= \rho^k \boxtimes L_{\rho}^{(k)}(\tau) + \sum_i \tau_i \boxtimes \sigma_i, \\
\left[\Jac_{(n-dk,dk)}(\tau)\right] &= R_{\rho}^{(k)}(\tau) \boxtimes \rho^k + \sum_i \sigma'_i \boxtimes \tau'_i,
\end{align*}
where $\tau_i$ and $\tau'_i$ are irreducible representations of $\GL_{dk}(F)$ 
which are not isomorphic to $\rho^k$.
We call $L_\rho^{(k)}(\tau)$ (\resp $R_\rho^{(k)}(\tau)$) the \emph{$k$-th left $\rho$-derivative} 
(\resp the \emph{$k$-th right $\rho$-derivative}) of $\tau$. 

\begin{defi}\label{def-der1}
\begin{enumerate}
\item
If $L_{\rho}^{(k)}(\tau) \not= 0$ but $L_{\rho}^{(k+1)}(\tau) =0$, 
we say that $L_{\rho}^{(k)}(\tau)$ is the \emph{highest left $\rho$-derivative}. 
We also define the \emph{highest right $\rho$-derivative} similarly. 

\item
When $L_{\rho}^{(1)}(\tau) =0$ (\resp $R_{\rho}^{(1)}(\tau) =0$), 
we say that $\tau$ is \emph{left $\rho$-reduced} (\resp \emph{right $\rho$-reduced}).
\end{enumerate}
\end{defi}

Similarly we treat now the case of $G_n$.
Again let $k \geq 0$ and let $P_{dk}$ be now the standard parabolic subgroup of $G_n$ 
with Levi subgroup of the form $\GL_{dk}(F) \times G_{n-dk}$. 
For $\Pi  \in\Rep(G_n)$,
define a semisimple representation $D_\rho^{(k)}(\Pi)$ of $G_{n-dk}$ so that
\[
\left[\Jac_{P_{dk}}(\Pi)\right] = \rho^k \boxtimes D_{\rho}^{(k)}(\Pi) + \sum_i \tau_i \boxtimes \Pi_i, 
\]
where $\tau_i$ is an irreducible representation of $\GL_{dk}(F)$ which is not isomorphic to $\rho^k$.
We call $D_\rho^{(k)}(\Pi)$ the \emph{$k$-th $\rho$-derivative} of $\Pi$. 

\begin{defi}\label{def-der}
\begin{enumerate}
\item
If $D_{\rho}^{(k)}(\Pi) \not= 0$ but $D_{\rho}^{(k+1)}(\Pi) =0$, 
we say that $D_{\rho}^{(k)}(\Pi)$ is the \emph{highest $\rho$-derivative}. 

\item
When $D_{\rho}^{(1)}(\Pi) =0$, 
we say that $\Pi$ is \emph{$\rho$-reduced}.
\end{enumerate}
\end{defi}

\subsection{The non-self-dual case}
If $\pi$ is irreducible and if $\rho$ is not self-dual, 
then the highest $\rho$-derivative $D_\rho^{(k)}(\pi)$ is irreducible and 
$\pi$ is isomorphic to the unique irreducible subrepresentation of $\rho^k \rtimes D_{\rho}^{(k)}(\pi)$
(see \cite[Lemma 3.1.3]{J-temp} and \cite[Proposition 2.7]{At2}). 
Using these properties, we can show the following. 

\begin{prop}\label{SI}
Let $\pi$ be an irreducible representation of $G_n$, 
and $r$ be a non-negative integer. 
If  $\rho$ is not self-dual, 
then $\rho^r \rtimes \pi$ is SI. 
\end{prop}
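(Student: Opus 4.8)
The plan is to prove that $\rho^r \rtimes \pi$ is socle irreducible (SI) by induction on $r$, the base case $r = 0$ being trivial. For the inductive step, I would first use the fact recalled just before the statement: since $\rho$ is not self-dual, the highest $\rho$-derivative $D_\rho^{(k)}(\pi)$ of $\pi$ is irreducible, and $\pi$ embeds as the unique irreducible subrepresentation of $\rho^k \rtimes D_\rho^{(k)}(\pi)$, where $k$ is the largest integer with $D_\rho^{(k)}(\pi) \neq 0$. Write $\pi_0 := D_\rho^{(k)}(\pi)$, so that $\pi_0$ is $\rho$-reduced and $\pi \hookrightarrow \rho^k \rtimes \pi_0$. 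Then $\rho^r \rtimes \pi \hookrightarrow \rho^{r+k} \rtimes \pi_0$, and it suffices to understand the socle of $\rho^{r+k} \rtimes \pi_0$ together with multiplicities.

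The key computation is via Tadi\'c's formula (Proposition \ref{eq:Tadic}) applied to $\mu^\ast(\rho^m \rtimes \pi_0)$ for $m = r+k$. Since $\rho$ is not self-dual, in $M^\ast(\rho^m)$ the only terms whose $\GL$-part lies on the line $\Z_\rho$ using only the representation $\rho$ itself (and not $\rho^\vee$) are the ``diagonal'' ones $\rho^a \otimes \rho^{m-a}$; the contragredient $\rho^\vee$ that enters $M^\ast$ lives on a different line $\Z_{\rho^\vee} \neq \Z_\rho$, so it cannot contribute to the $\rho^m$-isotypic part of the Jacquet module. Combined with $\pi_0$ being $\rho$-reduced, a bookkeeping argument shows that the $\rho^m \boxtimes (-)$-component of $\mu^\ast(\rho^m \rtimes \pi_0)$ is exactly $\rho^m \boxtimes \pi_0$ with multiplicity one. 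By Frobenius reciprocity, any irreducible submodule $\pi'$ of $\rho^m \rtimes \pi_0$ satisfies $\rho^m \boxtimes \pi_0 \hookrightarrow \Jac_{P_{dm}}(\pi')$, and the multiplicity-one statement forces $\soc(\rho^m \rtimes \pi_0)$ to be irreducible and to appear once in $[\rho^m \rtimes \pi_0]$. Hence $\rho^m \rtimes \pi_0$ is SI.

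It remains to descend from $\rho^{r+k} \rtimes \pi_0$ back to $\rho^r \rtimes \pi$. Since $\pi$ is the unique irreducible subrepresentation of $\rho^k \rtimes \pi_0$ and $\soc(\rho^{r+k}\rtimes\pi_0)$ is the unique irreducible submodule of $\rho^r \times \rho^k \rtimes \pi_0 \twoheadleftarrow$-adjointly related to $\rho^r \rtimes \pi$, one checks that $\soc(\rho^{r+k} \rtimes \pi_0) = \soc(\rho^r \rtimes \pi)$: indeed $\rho^r \rtimes \pi \hookrightarrow \rho^{r+k} \rtimes \pi_0$ gives $\soc(\rho^r \rtimes \pi) \subseteq \soc(\rho^{r+k}\rtimes\pi_0)$, and the latter is irreducible, so they coincide, and the multiplicity in $[\rho^r \rtimes \pi]$ is bounded by that in $[\rho^{r+k}\rtimes\pi_0]$, which is one. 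Therefore $\rho^r \rtimes \pi$ is SI.

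The main obstacle I anticipate is the precise bookkeeping in the Tadi\'c-formula step: one must argue carefully that no term other than $\rho^m \boxtimes \pi_0$ in $\mu^\ast(\rho^m \rtimes \pi_0)$ has $\GL$-part isomorphic to $\rho^m$, using both the non-self-duality of $\rho$ (to kill contributions involving $\rho^\vee$, which lives on the disjoint line $\Z_{\rho^\vee}$) and the hypothesis that $\pi_0$ is $\rho$-reduced (to kill contributions where some $\rho$'s are absorbed into a Jacquet module of $\pi_0$). A clean way to organize this is to expand $\mu^\ast(\rho^m \rtimes \pi_0) = M^\ast(\rho^m) \rtimes \mu^\ast(\pi_0)$ and track the supercuspidal support on the line $\Z_\rho$ in the left tensor factor; since $\rho$ is not self-dual this support forces the shape of every surviving term. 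This is exactly the kind of argument used in \cite{J-temp} and \cite{At2}, so I would follow that template.
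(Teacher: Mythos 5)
Your argument is essentially the paper's own proof: both reduce to $\rho^{r+k}\rtimes\pi_0$ with $\pi_0=D_\rho^{(k)}(\pi)$ the highest derivative, use Tadi\'c's formula to see that the $\rho^{r+k}$-isotypic part of the Jacquet module is exactly $\pi_0$ with multiplicity one, and conclude uniqueness of the socle and multiplicity one by Frobenius reciprocity. One small caveat: your justification that $\rho^\vee$ cannot contribute because $\Z_{\rho^\vee}\neq\Z_\rho$ is not correct in general (e.g.\ $\rho=\rho_0|\cdot|$ with $\rho_0$ self-dual is non-self-dual yet $\Z_{\rho^\vee}=\Z_\rho$); the needed and sufficient fact is simply $\rho^\vee\not\cong\rho$, so no product $(\rho^\vee)^a\times\rho^b\times\tau$ with $a>0$ can be isomorphic to $\rho^m$.
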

\begin{proof}
Consider the highest $\rho$-derivative $D_{\rho}^{(k)}(\pi)$. 
If $\pi' \hookrightarrow \rho^r \rtimes \pi$, 
then $\pi' \hookrightarrow \rho^{k+r} \rtimes D_{\rho}^{(k)}(\pi)$. 
In particular, $D_{\rho}^{(k+r)}(\pi') = D_{\rho}^{(k)}(\pi)$.
However, since 
\[
D_{\rho}^{(k+r)}\left(
\rho^{k+r} \rtimes D_{\rho}^{(k)}(\pi)
\right)
= D_{\rho}^{(k)}(\pi)
\]
by Tadi{\'c}'s formula (Proposition \ref{eq:Tadic}), 
we see that $\pi'$ is determined uniquely. 
Hence $\soc(\rho^r \rtimes \pi)$ is irreducible and satisfies 
\[
D_{\rho}^{(k+r)}\left(
\soc(\rho^r \rtimes \pi)
\right)
=
D_{\rho}^{(k+r)}\left(
\rho^r \rtimes \pi
\right)
=
D_{\rho}^{(k)}(\pi). 
\]
These equations imply that 
$\soc(\rho^r \rtimes \pi)$ appears with multiplicity one in $\left[\rho^r \rtimes \pi\right]$. 
\end{proof}

We set 
\[
S_\rho^{(r)}(\pi) \coloneqq \soc(\rho^r \rtimes \pi)
\]
for any $\pi\in \Irr(G_n)$. 
Note that $S_\rho^{(r)} = S_\rho^{(1)} \circ \dots \circ S_\rho^{(1)}$ ($r$ times compositions). 
\par

\subsection{The self-dual case}
Recall in \cite[Proposition 2.7]{At2} that 
the highest $\rho$-derivative $D_\rho^{(k)}(\pi)$ of an irreducible representation is isotypic, 
i.e., $D_\rho^{(k)}(\pi) = m \cdot \pi_0$ with some irreducible representation $\pi_0$ and a certain multiplicity $m > 0$. 
In this case, we have $\pi \hookrightarrow \rho^k \rtimes \pi_0$, 
but $\soc(\rho^k \rtimes \pi_0)$ can be reducible. 
\par

We give a criterion where $\rho^r \rtimes \pi$ is SI.
\begin{prop}
Suppose that $\rho$ is self-dual.
Let $\pi\in \Irr(G_n)$, 
and $r$ be a positive integer. 
The following are equivalent. 
\begin{enumerate}
\item[(a)]
$\rho^r \rtimes \pi$ is SI; 
\item[(b)]
$\rho^r \rtimes \pi$ is irreducible; 
\item[(c)]
$\rho^r \rtimes \pi$ has an irreducible subquotient $\pi'$ such that 
$D_{\rho}^{(k+r)}(\pi') = 2^r \cdot D_{\rho}^{(k)}(\pi)$, 
where $D_{\rho}^{(k)}(\pi)$ is the highest $\rho$-derivative of $\pi$. 
\end{enumerate}
\end{prop}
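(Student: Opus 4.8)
The plan is to prove the cyclic chain of implications $(b)\Rightarrow(a)\Rightarrow(c)\Rightarrow(b)$. The implication $(b)\Rightarrow(a)$ is immediate: an irreducible representation is trivially SI. So the content is in $(a)\Rightarrow(c)$ and $(c)\Rightarrow(b)$.

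\textbf{Step 1: $(a)\Rightarrow(c)$.} Assume $\rho^r\rtimes\pi$ is SI, and let $\pi'=\soc(\rho^r\rtimes\pi)$, which by assumption occurs with multiplicity one in $[\rho^r\rtimes\pi]$. Write $D_\rho^{(k)}(\pi)=m\cdot\pi_0$ for the highest $\rho$-derivative of $\pi$ (isotypic, by \cite[Proposition 2.7]{At2}). First I would compute $D_\rho^{(k+r)}(\rho^r\rtimes\pi)$ using Tadi\'c's formula (Proposition \ref{eq:Tadic}): since $M^\ast(\rho^r)$ contributes a term $\rho^r\otimes\rho^r\otimes\rho^r$ together with terms whose first or last tensor factor involves $\rho|\cdot|^{\pm1}$ (using that $\rho$ is self-dual so $\rho^\vee=\rho$), one extracts that the $\rho^{k+r}\boxtimes(-)$-isotypic part of $\mu^\ast(\rho^r\rtimes\pi)$ is governed by $2^r\cdot D_\rho^{(k)}(\pi)$ — the factor $2^r$ arising because each of the $r$ copies of $\rho$ in $\rho^r$ can be "absorbed" on either side, matching the combinatorics of $M^\ast(\rho)=\rho\otimes\rho + \dots$. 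Concretely, $D_\rho^{(k+r)}(\rho^r\rtimes\pi)=2^r\cdot D_\rho^{(k)}(\pi)=2^r m\cdot\pi_0$. Now, since $\pi'\hookrightarrow\rho^r\rtimes\pi\hookrightarrow\rho^{k+r}\rtimes\pi_0$, each irreducible subquotient $\pi''$ of $\rho^r\rtimes\pi$ satisfies $D_\rho^{(k+r)}(\pi'')\ne0$ and is a sum of copies of $\pi_0$; adding these up over all subquotients (with multiplicities) must recover $2^r m\cdot\pi_0$. The SI hypothesis forces the subquotient structure to be rigid enough that the top derivative of $\pi'$ alone already accounts for everything; more precisely, one shows $D_\rho^{(k+r)}(\pi')\le 2^r\cdot D_\rho^{(k)}(\pi)$ as virtual characters always, and the reverse inequality in the SI case follows because any "missing" multiplicity would force an additional copy of $\pi'$ (or of another subquotient $\tau\rtimes(\text{something})$) in $[\rho^r\rtimes\pi]$ with $D_\rho^{(k+r)}\ne0$, and tracing Frobenius reciprocity such a subquotient would embed into $\rho^{k+r}\rtimes\pi_0$ and hence into $\rho^r\rtimes\pi$, contradicting the multiplicity-one statement. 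This gives $(c)$.

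\textbf{Step 2: $(c)\Rightarrow(b)$.} Assume $\rho^r\rtimes\pi$ has an irreducible subquotient $\pi'$ with $D_\rho^{(k+r)}(\pi')=2^r\cdot D_\rho^{(k)}(\pi)$. Since $D_\rho^{(k+r)}$ is additive on $[\rho^r\rtimes\pi]$ and every irreducible subquotient contributes a nonnegative multiple of $\pi_0$ to it, while the total is exactly $2^r\cdot D_\rho^{(k)}(\pi)=2^r m\cdot\pi_0$ (as computed in Step 1), the single subquotient $\pi'$ already exhausts the entire $\rho^{k+r}\boxtimes(-)$-isotypic Jacquet module. Hence every other irreducible subquotient $\pi''$ of $\rho^r\rtimes\pi$ has $D_\rho^{(k+r)}(\pi'')=0$, i.e. is $\rho^{k+r}$-reduced in the appropriate sense. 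But any subquotient of $\rho^r\rtimes\pi$ must, by the geometric lemma, have $\rho^{r}$ (or more) worth of $\rho$'s available in its Jacquet module against $\GL$-blocks, and combined with $D_\rho^{(k)}(\pi)$ being the \emph{highest} derivative of $\pi$, one checks this is impossible unless there are no such $\pi''$ at all — so $\rho^r\rtimes\pi$ is irreducible. (An alternative, perhaps cleaner route: use that $\pi'\hookrightarrow\rho^{k+r}\rtimes\pi_0$ and $\pi\hookrightarrow\rho^k\rtimes\pi_0$ together with the self-dual reducibility theory and Tadi\'c's formula to show $\pi'\cong\soc(\rho^r\rtimes\pi)$ and $[\rho^r\rtimes\pi]=[\pi']$ by a length count via $D_\rho^{(k+r)}$.)

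\textbf{Main obstacle.} The crux is the exact bookkeeping in Step 1 — showing that the top $\rho$-derivative of an irreducible subquotient of $\rho^r\rtimes\pi$ can never \emph{exceed} $2^r\cdot D_\rho^{(k)}(\pi)$, and that in the SI case it is achieved. This requires a careful induction on $r$ (reducing to $r=1$ via $\rho^r\rtimes\pi=\rho\times\rho^{r-1}\rtimes\pi$ and $S_\rho^{(1)}$-type arguments adapted to the self-dual setting), together with precise control of the cross terms in Tadi\'c's formula coming from $\rho|\cdot|^{\pm1}$; the self-duality of $\rho$ is used essentially here, as it is what produces the coefficient $2$ rather than $1$. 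The interplay between the multiplicity-one clause in the definition of SI and the additivity of $D_\rho^{(k+r)}$ is what makes the equivalence work, and getting that interplay exactly right is the delicate point.
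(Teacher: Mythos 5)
There is a genuine gap, and it is the same one in both of your nontrivial steps: you never establish that for self-dual $\rho$ the irreducible \emph{quotients} of $\rho^r\rtimes\pi$ coincide with its irreducible \emph{subrepresentations}. The paper gets this from the MVW-functor: applying $\MVW$ and then the contragredient to $\pi'\hookrightarrow\rho^r\rtimes\pi$ yields $\rho^{r\vee}\rtimes\pi\twoheadrightarrow\pi'$, and $\rho^\vee\cong\rho$. With this in hand, (a)$\Rightarrow$(b) is two lines (if $\rho^r\rtimes\pi$ were reducible, the nonzero quotient $(\rho^r\rtimes\pi)/\soc$ would have an irreducible quotient, which is then also a subrepresentation, forcing $\soc$ to occur with multiplicity $\geq 2$), and (c)$\Rightarrow$(b) closes by noting that every irreducible subrepresentation \emph{and} every irreducible quotient embeds into $\rho^{k+r}\rtimes\pi_0$ and hence has nonzero $D_\rho^{(k+r)}$, so once $\pi'$ exhausts $2^r\cdot D_\rho^{(k)}(\pi)$ the quotient $(\rho^r\rtimes\pi)/\pi'$ can have no irreducible quotient and must vanish. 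Your substitutes for this do not work: in Step 1 the inference that a subquotient with nonzero $D_\rho^{(k+r)}$ "would embed into $\rho^{k+r}\rtimes\pi_0$ and hence into $\rho^r\rtimes\pi$" is false (an embedding into the larger induced representation need not factor through the subrepresentation $\rho^r\rtimes\pi$, whose socle is exactly what is in question), and your earlier assertion that \emph{every} irreducible subquotient of $\rho^r\rtimes\pi$ has nonzero $D_\rho^{(k+r)}$ is only justified for subrepresentations and (via MVW) quotients, not for intermediate composition factors. In Step 2 the claim that any other subquotient "must have $\rho^r$ worth of $\rho$'s available ... which is impossible" is not an argument: having copies of $\rho$ in the supercuspidal support does not force the $\rho^{k+r}\boxtimes(-)$-isotypic part of the Jacquet module to be nonzero.

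The parts you do have right — $D_\rho^{(k+r)}(\rho^r\rtimes\pi)=2^r\cdot D_\rho^{(k)}(\pi)$ from Tadi\'c's formula, and the additivity/positivity bookkeeping showing that $\pi'$ as in (c) must absorb the entire top derivative — are exactly the other half of the paper's proof, so the fix is localized: insert the MVW duality between subs and quotients, prove (a)$\Leftrightarrow$(b) first, and then (b)$\Leftrightarrow$(c) by your derivative count. Also note that your "main obstacle" paragraph points at the wrong difficulty: no induction on $r$ or delicate analysis of cross terms $\rho|\cdot|^{\pm1}$ is needed, since $\rho|\cdot|^{\pm1}\not\cong\rho$ means those terms simply do not contribute to the $\rho^{k+r}$-isotypic component.
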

\begin{proof}
We use here the MVW-functor, see Paragraph \ref{rep}. 
As we assume that $\rho$ is self-dual, 
if an irreducible representation $\pi'$ satisfies that $\pi' \hookrightarrow \rho^r \rtimes \pi$, 
then by taking the MVW-functor and the contragredient functor, 
we have $\rho^r \rtimes \pi \twoheadrightarrow \pi'$. 
\par

Now we assume that $\soc(\rho^r \rtimes \pi)$ is irreducible but $\rho^r \rtimes \pi$ is reducible. 
The above remark implies that the quotient $(\rho^r \rtimes \pi) / \soc(\rho^r \rtimes \pi)$ 
has an irreducible quotient isomorphic to $\soc(\rho^r \rtimes \pi)$.
It means that $\soc(\rho^r \rtimes \pi)$ appears with multiplicity greater than one in $\left[\rho^r \rtimes \pi\right]$. 
Hence (a) implies (b).
As the opposite implication is obvious,  (a) and (b) are equivalent. 
\par

Note that $D_{\rho}^{(k+r)}(\rho^r \rtimes \pi) = 2^r \cdot D_{\rho}^{(k)}(\pi)$.
In particular, (b) implies (c).
On the other hand, let $\pi'$ be an irreducible subquotient of $\rho^r \rtimes \pi$ such that 
$D_{\rho}^{(k+r)}(\pi') = 2^r \cdot D_{\rho}^{(k)}(\pi)$.
Then $\pi'$ must be a subrepresentation of $\rho^r \rtimes \pi$, 
and $(\rho^r \rtimes \pi)/\pi'$ has no irreducible quotient. 
Hence $\pi' = \rho^r \rtimes \pi$ so that $\rho^r \rtimes \pi$ is irreducible. 
\end{proof}

\subsection{$\Delta_\rho[0,-1]$-derivatives and $Z_\rho[0,1]$-derivatives}\label{other-derivatives}
In the case when $\rho$ is self-dual, $\rho$-derivatives are difficult. 
Therefore, we define some other derivatives in this paragraph. 
This will be a key ingredient for the making the Zelevinsky--Aubert duality explicit.
In this paragraph we assume that $\rho \in \Cusp(\GL_d(F))$ is self-dual.
\par

Let $\Pi \in \Rep(G_n)$.  
Define the \emph{$\Delta_\rho[0,-1]$-derivative} $D_{\Delta_\rho[0,-1]}^{(k)}(\Pi)$ 
and the \emph{$Z_\rho[0,1]$-derivative} $D_{Z_\rho[0,1]}^{(k)}(\Pi)$ by 
the semisimple representations of $G_{n-2dk}$ satisfying 
\[
\left[\Jac_{P_{2dk}}(\pi)\right] = 
\Delta_\rho[0,-1]^k \boxtimes D_{\Delta_\rho[0,-1]}^{(k)}(\pi)
+Z_\rho[0,1]^k \boxtimes D_{Z_\rho[0,1]}^{(k)}(\pi)
+ \sum_i \tau_i \boxtimes \pi_i, 
\]
where $\tau_i \in \Irr(\GL_{2dk}(F))$ such that 
$\tau_i \not\cong \Delta_\rho[0,-1]^k, Z_\rho[0,1]^k$. 
\par

Typically, when the supercuspidal representation $\rho$ will be clear from the context, 
for short, we say the \emph{$[0,-1]$-derivative} instead of the $\Delta_\rho[0,-1]$-derivative, 
and the \emph{$[0,1]$-derivative} instead of the $Z_\rho[0,1]$-derivative. 
We also write $D_{[0,-1]}^{(k)}(\Pi) \coloneqq D_{\Delta_\rho[0,-1]}^{(k)}(\Pi)$ 
and $D_{[0,1]}^{(k)}(\Pi) \coloneqq D_{Z_\rho[0,1]}^{(k)}(\Pi)$. 
Similar to Definition \ref{def-der}, we define 
the notions of the \emph{highest $[0,-1]$-derivatives} (\resp \emph{highest $[0,1]$-derivatives})
and the fact of being \emph{$\Delta_\rho[0,-1]$-reduced} (\resp \emph{$Z_\rho[0,1]$-reduced}).

\begin{lem}\label{[01]-red}
Fix $\rho\in\Cusp(\GL_d(F))$ and $\epsilon \in \{\pm 1\}$. 
Let $\pi\in \Irr(G_n)$. 
Suppose that $\pi$ is $\rho|\cdot|^{\epsilon}$-reduced. 
Let $D_{\rho}^{(k_0)}(\pi) = m \cdot \pi_0$ be the highest $\rho$-derivative of $\pi$ (with multiplicity $m > 0$) and let $\pi_1 = D_{\rho|\cdot|^{\epsilon}}^{(k_1)}(\pi_0)$
be the highest $\rho|\cdot|^{\epsilon}$-derivative of $\pi_0$. 
Then we have the following. 
\begin{enumerate}
\item
$k_0 \geq k_1$. 
\item
$D_{[0,\epsilon]}^{(k_1)}(\pi)$ is the highest $[0,\epsilon ]$-derivative. 
\item
$D_{[0,\epsilon]}^{(k_1)}(\pi)$ is $\rho|\cdot|^{\epsilon }$-reduced. 
\end{enumerate}
\end{lem}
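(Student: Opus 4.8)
The statement concerns the interplay between the ordinary $\rho$-derivative, the $\rho|\cdot|^\epsilon$-derivative, and the $[0,\epsilon]$-derivative of an irreducible $\pi$ that is assumed $\rho|\cdot|^\epsilon$-reduced. The basic computational tool is Tadi\'c's formula (Proposition \ref{eq:Tadic}) applied to $\mu^\ast$, together with the Jacquet module formulas for $\Delta_\rho[0,-1]$ and $Z_\rho[0,1]$: for instance $m^\ast(\Delta_\rho[0,-1])$ involves $\rho|\cdot|^0\otimes\rho|\cdot|^{-1}$ and $\rho|\cdot|^{-1}\boxtimes$-terms, and analogously $Z_\rho[0,1]$ involves $\rho|\cdot|^1$ and $\rho|\cdot|^0$. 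The idea is that since $\pi$ is $\rho|\cdot|^\epsilon$-reduced, every appearance of $\rho|\cdot|^\epsilon$ in a Jacquet module of $\pi$ must be ``produced'' together with a preceding $\rho$ (coming from the segment structure of $\Delta_\rho[0,-1]$ or $Z_\rho[0,1]$), so counting the $[0,\epsilon]$-derivative is the same as first taking $\rho$-derivatives and then $\rho|\cdot|^\epsilon$-derivatives in a controlled order.

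\textbf{Step 1: reduce to a statement about iterated Jacquet modules.} First I would unwind the definition of $D_{[0,\epsilon]}^{(k)}(\pi)$ via $\mu^\ast$ and rewrite $\Jac_{P_{2dk}}$ as an iteration of smaller Jacquet steps, tracking which irreducible constituents of $\GL$-type can appear. Using that $\pi$ is $\rho|\cdot|^\epsilon$-reduced, I would show that in $[\Jac_{P_{2dk}}(\pi)]$ the constituents of the form $\Delta_\rho[0,\epsilon-1]^k\boxtimes(-)$ (or $Z_\rho[0,\epsilon+1]^k\boxtimes(-)$ in the $\epsilon=+1$ case; I am writing $\epsilon=-1$ notation but the two signs are parallel) are exactly the ones that first ``use up'' $k$ copies of $\rho$ and then $k$ copies of $\rho|\cdot|^\epsilon$. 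Concretely, I expect the identity
\[
D_{[0,\epsilon]}^{(k)}(\pi) \;=\; D_{\rho|\cdot|^{\epsilon}}^{(k)}\bigl(D_{\rho}^{(k)}(\pi)\bigr)
\]
(with appropriate multiplicities) as semisimple representations, which would be the engine for all three parts.

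\textbf{Step 2: deduce (1), (2), (3).} Granting such an identity: part (1), namely $k_0\ge k_1$, follows because $D_{\rho|\cdot|^\epsilon}^{(k)}(D_\rho^{(k)}(\pi))$ can only be nonzero if $D_\rho^{(k)}(\pi)\ne 0$, i.e.\ $k\le k_0$, so in particular the highest $k$ for which the $\rho|\cdot|^\epsilon$-derivative survives on $\pi_0=D_\rho^{(k_0)}(\pi)$ satisfies $k_1\le k_0$ — here one must also use that $D_\rho^{(k)}(\pi)$ is isotypic (\cite[Proposition 2.7]{At2}) so that passing through intermediate $k<k_0$ does not destroy the $\rho|\cdot|^\epsilon$-behaviour, i.e.\ $D_{\rho|\cdot|^\epsilon}^{(k_1)}(D_\rho^{(k)}(\pi))\ne 0$ for all $k\le k_0$. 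Part (2): $D_{[0,\epsilon]}^{(k_1)}(\pi)=D_{\rho|\cdot|^\epsilon}^{(k_1)}(\pi_0)=\pi_1\ne 0$ while $D_{[0,\epsilon]}^{(k_1+1)}(\pi)=D_{\rho|\cdot|^\epsilon}^{(k_1+1)}(D_\rho^{(k_1+1)}(\pi))$; since $k_1+1\le k_0+1$ the inner term is still (a multiple of) $\pi_0$ when $k_1+1\le k_0$, and then the outer derivative vanishes by maximality of $k_1$, while if $k_1+1>k_0$ the inner term vanishes — either way it is zero, so $k_1$ is the highest index. Part (3): $D_{[0,\epsilon]}^{(k_1)}(\pi)=\pi_1$ is the highest $\rho|\cdot|^\epsilon$-derivative of $\pi_0$, hence by definition $D_{\rho|\cdot|^\epsilon}^{(1)}(\pi_1)=0$, i.e.\ $\pi_1$ is $\rho|\cdot|^\epsilon$-reduced.

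\textbf{Main obstacle.} The hard part is Step 1 — establishing the clean identity $D_{[0,\epsilon]}^{(k)}(\pi)=D_{\rho|\cdot|^\epsilon}^{(k)}(D_\rho^{(k)}(\pi))$, including the bookkeeping of multiplicities. The subtlety is that in $[\Jac_{P_{2dk}}(\pi)]$ many ``mixed'' constituents $\tau_i\boxtimes\pi_i$ occur, and one must be sure that no constituent of the form $\rho|\cdot|^\epsilon$-before-$\rho$ or an interleaved pattern contributes a $\Delta_\rho[0,\epsilon-1]^k$ (resp.\ $Z_\rho[0,\epsilon+1]^k$) term in a way not accounted for by the right-hand side; this is where the hypothesis that $\pi$ is $\rho|\cdot|^\epsilon$-reduced is essential, since it forbids any $\rho|\cdot|^\epsilon$ from appearing ``on its own'' at the front of a Jacquet module of $\pi$. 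I would handle this by an induction on $k$, peeling off one $\Delta_\rho[0,-1]$ (resp.\ one $Z_\rho[0,1]$) at a time using the co-multiplication $m^\ast$ on these specific length-two segments and the square-free/isotypic structure of highest $\rho$-derivatives, reducing the combinatorics to the $k=1$ case which can be checked directly from Tadi\'c's formula.
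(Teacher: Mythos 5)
Your Step 1 identity $D_{[0,\epsilon]}^{(k)}(\pi) = D_{\rho|\cdot|^{\epsilon}}^{(k)}\bigl(D_{\rho}^{(k)}(\pi)\bigr)$ is in fact correct, and it holds unconditionally in the Grothendieck group: by transitivity of Jacquet functors, both sides compute the multiplicity of $\rho^{k}\boxtimes(\rho|\cdot|^{\epsilon})^{k}\boxtimes\pi'$ in the iterated Jacquet module, since among the representations $\Delta_\rho[0,-1]^a\times Z_\rho[-1,0]^b$ (\resp $\Delta_\rho[1,0]^a\times Z_\rho[0,1]^b$) with the relevant supercuspidal support only $a=k,b=0$ (\resp $a=0,b=k$) contains $\rho^k\boxtimes(\rho|\cdot|^{\epsilon})^k$ in its $(dk,dk)$-Jacquet module, with multiplicity one. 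Note, however, that this identity does not use the hypothesis that $\pi$ is $\rho|\cdot|^{\epsilon}$-reduced, which is a warning sign: that hypothesis is essential for the lemma (part (1) is false without it), so it must enter somewhere in Step 2.

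The genuine gap is in Step 2. You repeatedly replace $D_{\rho}^{(k)}(\pi)$ for $k<k_0$ (or $k=k_1$, $k_1+1$) by a multiple of $\pi_0=D_{\rho}^{(k_0)}(\pi)$, invoking \cite[Proposition 2.7]{At2}; but that result says only the \emph{highest} $\rho$-derivative is isotypic, and intermediate derivatives $D_{\rho}^{(k)}(\pi)$ with $0<k<k_0$ are in general neither isotypic nor related to $\pi_0$ in the way you need. Concretely, your equation $D_{[0,\epsilon]}^{(k_1)}(\pi)=D_{\rho|\cdot|^{\epsilon}}^{(k_1)}(\pi_0)=\pi_1$ cannot hold when $k_0>k_1$: the left-hand side is a representation of $G_{n-2dk_1}$ while $\pi_1$ lives on $G_{n-dk_0-dk_1}$. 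Consequently neither the non-vanishing $D_{\rho|\cdot|^{\epsilon}}^{(k_1)}(D_{\rho}^{(k_1)}(\pi))\neq 0$, nor the vanishing $D_{\rho|\cdot|^{\epsilon}}^{(k_1+1)}(D_{\rho}^{(k_1+1)}(\pi))=0$ for $k_1+1\le k_0$, nor part (3), nor $k_0\ge k_1$ actually follow from your identity; an intermediate derivative could a priori contain a constituent whose $\rho|\cdot|^{\epsilon}$-derivative exceeds $k_1$. The paper closes exactly this gap by a different route: from $\pi\hookrightarrow\rho^{k_0}\times(\rho|\cdot|^{\epsilon})^{k_1}\rtimes\pi_1$ the $\rho|\cdot|^{\epsilon}$-reducedness of $\pi$ forces $k_0\ge k_1$ (otherwise every constituent of the $\GL$-factor has nonzero left $\rho|\cdot|^{\epsilon}$-derivative) and lets one re-sort the $\GL$-factor as $Z_\rho[0,1]^{k_1}\times\rho^{k_0-k_1}$ (\resp the $\Delta$ version); one then proves the key claim that $\pi_1$ is $\rho$-reduced (again using the hypothesis on $\pi$), after which Tadi\'c's formula applied to $\rho^{k_0-k_1}\rtimes\pi_1$ yields (2) and (3). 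Some substitute for the claim that $\pi_1$ is $\rho$-reduced is unavoidable, and your proposal contains nothing playing that role.
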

\begin{proof}
Note that $\pi \hookrightarrow \rho^{k_0} \times (\rho|\cdot|^{\epsilon})^{k_1} \rtimes \pi_1$.
If $k_1 > k_0$, then no irreducible subquotient of $\rho^{k_0} \times (\rho|\cdot|^{\epsilon })^{k_1}$ 
is  left $\rho|\cdot|^{\epsilon}$-reduced. 
Since $\pi$ is $\rho|\cdot|^{\epsilon}$-reduced, 
we must have $k_0 \geq k_1$ and 
\[
\pi \hookrightarrow 
\left\{
\begin{aligned}
Z_\rho[0,1]^{k_1} \times \rho^{k_0-k_1} \rtimes &\pi_1 \iif \epsilon = 1, \\
\Delta_\rho[0,-1]^{k_1} \times \rho^{k_0-k_1} \rtimes &\pi_1 \iif \epsilon = -1.
\end{aligned}
\right. 
\]
\par

Now we claim that $\pi_1$ is $\rho$-reduced. 
This is trivial when $k_1 = 0$. 
If $k_1 > 0$ and  $\pi_1$ is not $\rho$-reduced, since $\pi_0$ is $\rho$-reduced, 
we can find a representation $\pi_1' \not= 0$ such that
\[
\pi_0 \hookrightarrow 
\left\{
\begin{aligned}
\Delta_\rho[1,0] \rtimes &\pi_1' \iif \epsilon = 1, \\
Z_\rho[-1,0] \rtimes &\pi_1' \iif \epsilon = -1.
\end{aligned}
\right. 
\]
Since $\pi \hookrightarrow \rho^{k_0} \rtimes \pi_0$, 
it implies that $D_{\rho|\cdot|^{\epsilon}}^{(1)}(\pi) \not= 0$. 
This is a contradiction so that we obtain the claim.
\par

Since $\pi_1$ is $\rho$-reduced and $\rho|\cdot|^{\epsilon }$-reduced, 
we see that $D_{[0,\epsilon ]}^{(1)}(\rho^{k_0-k_1} \rtimes \pi_1) = 0$ by Tadi{\'c}'s formula (Proposition \ref{eq:Tadic}).
Hence $D_{[0,\epsilon ]}^{(k_1)}(\pi)$ is the highest $[0,\epsilon ]$-derivative. 
Since it is a subrepresentation of $\left[\rho^{k_0-k_1} \rtimes \pi_1\right]$,
we see that $D_{[0, \epsilon ]}^{(k_1)}(\pi)$ is $\rho|\cdot|^{\epsilon }$-reduced.
\end{proof}

In the next proposition, we will use the following simple lemma on representations of general linear groups.
\begin{lem}\label{surj}
Let $k>0$ and let $\tau\in \Rep(\GL_{2dk}(F))$. 
Suppose that
\begin{itemize}
\item
$\tau$ is left $\rho|\cdot|^{-1}$-reduced (\resp left $\rho|\cdot|^1$-reduced); 
\item
$[\tau]$ contains $\Delta_\rho[0,-1]^k$ (\resp $Z_\rho[0,1]^k$).  
\end{itemize}
Then there is a surjection $\tau \twoheadrightarrow \Delta_\rho[0,-1]^k$ (\resp $\tau \twoheadrightarrow Z_\rho[0,1]^k$).
\end{lem}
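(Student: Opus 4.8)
The plan is to treat the two cases symmetrically and focus on the $\epsilon=-1$ case, i.e.\ $\tau$ left $\rho|\cdot|^{-1}$-reduced with $[\tau]\ni\Delta_\rho[0,-1]^k$; the $Z_\rho[0,1]$ case follows by the analogous argument interchanging $\rho|\cdot|^1$ with $\rho|\cdot|^{-1}$ (or by applying the Zelevinsky involution on $\GL$). The key observation is that $\Delta_\rho[0,-1]^k$ is the unique irreducible representation in its supercuspidal support which is left $\rho|\cdot|^{-1}$-reduced and right $\rho$-reduced; indeed any irreducible $\sigma$ with $\scusp(\sigma)=\scusp(\Delta_\rho[0,-1]^k)=k\cdot\{\rho,\rho|\cdot|^{-1}\}$ has a Zelevinsky/Langlands classification in terms of segments in the line of $\rho$, and being left $\rho|\cdot|^{-1}$-reduced forces every segment to begin at $\rho|\cdot|^{0}$ while being right $\rho$-reduced (equivalently having no $\rho$ in the relevant Jacquet module on the right) pins down the multiset of segments to be $k$ copies of $[0,-1]_\rho$. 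So $\Delta_\rho[0,-1]^k = L(\Delta_\rho[0,-1],\dots,\Delta_\rho[0,-1])$ is characterized inside its block by these two reducedness conditions.

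First I would take an irreducible quotient $\sigma$ of $\tau$, so $\tau\twoheadrightarrow\sigma$ and $\sigma$ is again left $\rho|\cdot|^{-1}$-reduced (a quotient of a left $\rho|\cdot|^{-1}$-reduced representation is left $\rho|\cdot|^{-1}$-reduced, since $L_{\rho|\cdot|^{-1}}^{(1)}$ is right-exact-ish at the level of top Jacquet pieces — more precisely $\Jac_{(d,\cdot)}$ is exact and the $\rho|\cdot|^{-1}\boxtimes(-)$ isotypic component of $[\Jac\,\sigma]$ embeds in that of $[\Jac\,\tau]$). Next I would show $\sigma$ can be chosen to have $\Delta_\rho[0,-1]^k$ in its supercuspidal support "with the right shape", and in fact $\sigma\cong\Delta_\rho[0,-1]^k$: by the characterization above it suffices to check $\sigma$ is right $\rho$-reduced. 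Here is where the hypothesis that $[\tau]$ \emph{contains} $\Delta_\rho[0,-1]^k$ enters — I would argue that among the irreducible constituents of $\tau$ only $\Delta_\rho[0,-1]^k$ is left $\rho|\cdot|^{-1}$-reduced. Any other constituent $\sigma'$ has $\scusp(\sigma')\subseteq k\{\rho,\rho|\cdot|^{-1}\}$; writing $\sigma'$ via segments, if $\sigma'\ne\Delta_\rho[0,-1]^k$ then some segment does not have the form $[0,-1]_\rho$, hence either extends below $-1$ (impossible by the support) or starts at $\rho|\cdot|^{-1}$, which makes $\sigma'$ \emph{not} left $\rho|\cdot|^{-1}$-reduced. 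Therefore the only left $\rho|\cdot|^{-1}$-reduced irreducible quotient $\sigma$ of $\tau$ is $\Delta_\rho[0,-1]^k$, giving the desired surjection.

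The main obstacle, and the step I would be most careful about, is the combinatorial classification of irreducible representations of $\GL_{2dk}(F)$ with supercuspidal support contained in $k\cdot\{\rho,\rho|\cdot|^{-1}\}$ together with the translation of "left $\rho|\cdot|^{-1}$-reduced'' into a condition on the segments of the Zelevinsky (or Langlands) datum. Concretely one needs: for $\sigma=L(\Delta_\rho[x_1,y_1],\dots)$ (or the $Z$-analogue), $\sigma$ is left $\rho|\cdot|^{-1}$-reduced iff no $x_i=-1$ once the data are arranged so that the leftmost Jacquet piece is read off — this uses the Jacquet-module formulas for $\Delta_\rho[x,y]$ and ladder representations quoted in \S\ref{rep0}, plus the fact that in a two-point line $\{\rho|\cdot|^0,\rho|\cdot|^{-1}\}$ the only segments available are $[0,0]$, $[-1,-1]$, and $[0,-1]$, so the support constraint $k$-and-$k$ forces exactly $k$ copies of $[0,-1]$ as soon as no segment is $[-1,-1]$. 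This is elementary but must be stated cleanly; once it is in place, the surjection $\tau\twoheadrightarrow\Delta_\rho[0,-1]^k$ is immediate by taking the appropriate irreducible quotient as above. I would also remark that the multiplicity-one assertion implicit in "$[\tau]$ contains $\Delta_\rho[0,-1]^k$'' is not needed: the existence of one such constituent already produces the quotient, because that constituent, being the unique left $\rho|\cdot|^{-1}$-reduced one, must be the (or a) top of $\tau$ after passing to a suitable quotient with matching reducedness.
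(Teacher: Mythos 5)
Your argument is essentially the paper's: both rest on the facts that every irreducible constituent of a left $\rho|\cdot|^{-1}$-reduced representation is again left $\rho|\cdot|^{-1}$-reduced (by exactness of the Jacquet functor) and that $\Delta_\rho[0,-1]^k$ is the unique left $\rho|\cdot|^{-1}$-reduced irreducible representation with its supercuspidal support; the paper simply quotes Zelevinsky's Example 11.3 for the classification $\Delta_\rho[0,-1]^a\times Z_\rho[-1,0]^b$ of that block instead of re-deriving the segment combinatorics, and your auxiliary right-$\rho$-reduced condition is never actually used. The one point you should make explicit is the paper's opening reduction ``we may assume all irreducible constituents of $\tau$ have the same supercuspidal support'': without first passing to that direct summand, an arbitrary irreducible quotient of $\tau$ could lie in a different block, so your claim that the only left $\rho|\cdot|^{-1}$-reduced irreducible quotient of $\tau$ is $\Delta_\rho[0,-1]^k$ is not literally true as stated.
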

\begin{proof}
We may assume that all irreducible constituents of $\tau$ have the same supercuspidal support. 
They are all left $\rho|\cdot|^{-1}$-reduced (\resp left $\rho|\cdot|^1$-reduced) as is $\tau$.
By \cite[Example 11.3]{Z}, 
the irreducible representations of $\GL_{2dk}(F)$ which have the same supercuspidal support as $\Delta_\rho[0,-1]^k$
(\resp $Z_\rho[0,1]^k$) are of the form $\Delta_\rho[0,-1]^a \times Z_\rho[-1,0]^b$ 
(\resp $\Delta_\rho[1,0]^a \times Z_\rho[0,1]^b$) for some $a,b \geq 0$ with $a+b = k$. 
Among  them, $\Delta_\rho[0,-1]^k$ (\resp $Z_\rho[0,1]^k$) is characterized as 
the only left $\rho|\cdot|^{-1}$-reduced (\resp left $\rho|\cdot|^1$-reduced) representation. 
Therefore, we have $\tau \twoheadrightarrow \Delta_\rho[0,-1]^k$ 
(\resp $\tau \twoheadrightarrow Z_\rho[0,1]^k$). 
\end{proof}

Now we can prove the irreducibility of the highest $[0,\pm1]$-derivatives 
of $\rho|\cdot|^{\pm1}$-reduced irreducible representations.
\begin{prop}\label{010-1}
Let $\pi\in\Irr(G_n)$. 
Suppose that $\pi$ is $\rho|\cdot|^{-1}$-reduced (\resp $\rho|\cdot|^1$-reduced). 
Then the highest $[0,-1]$-derivative $D_{[0,-1]}^{(k)}(\pi)$ 
(\resp the highest $[0,1]$-derivative $D_{[0,1]}^{(k)}(\pi)$) is irreducible. 
Moreover, $\Delta_\rho[0,-1]^r \rtimes \pi$ (\resp $Z_\rho[0,1]^r \rtimes \pi$) is SI.
\end{prop}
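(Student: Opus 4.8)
We treat the case where $\pi$ is $\rho|\cdot|^{-1}$-reduced; the $\rho|\cdot|^{1}$-reduced case is handled in the same way, interchanging $\Delta_\rho[0,-1]$ with $Z_\rho[0,1]$ and $\rho|\cdot|^{-1}$ with $\rho|\cdot|^{1}$ throughout, and invoking the ``resp.'' statements in Lemmas~\ref{[01]-red} and~\ref{surj}. The plan is to run the proof of Proposition~\ref{SI}, with $\rho^{r}$ replaced by $\Delta_\rho[0,-1]^{r}$ and the $\rho$-derivative replaced by the $[0,-1]$-derivative; the role played in that argument by the irreducibility and $\rho$-reducedness of the highest $\rho$-derivative (valid there since $\rho$ was non-self-dual) is now taken over by Lemma~\ref{[01]-red}.

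First I would record the structure supplied by Lemma~\ref{[01]-red} applied with $\epsilon=-1$: writing $D_{\rho}^{(k_0)}(\pi)=m\cdot\pi_0$ for the highest $\rho$-derivative and $\pi_1=D_{\rho|\cdot|^{-1}}^{(k)}(\pi_0)$ for the highest $\rho|\cdot|^{-1}$-derivative of $\pi_0$ (irreducible, as $\rho|\cdot|^{-1}$ is not self-dual), one has: $D_{[0,-1]}^{(k)}(\pi)$ is the highest $[0,-1]$-derivative and is $\rho|\cdot|^{-1}$-reduced, $\pi\hookrightarrow\Delta_\rho[0,-1]^{k}\times\rho^{k_0-k}\rtimes\pi_1$, the representation $\pi_1$ is both $\rho$-reduced and $\rho|\cdot|^{-1}$-reduced, $D_{[0,-1]}^{(1)}(\rho^{k_0-k}\rtimes\pi_1)=0$, and $D_{[0,-1]}^{(k)}(\pi)$ is a subrepresentation of $\left[\rho^{k_0-k}\rtimes\pi_1\right]$.

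Now let $\pi'\hookrightarrow\Delta_\rho[0,-1]^{r}\rtimes\pi$ be irreducible. The first step is that $\pi'$ is again $\rho|\cdot|^{-1}$-reduced: by Tadi{\'c}'s formula (Proposition~\ref{eq:Tadic}), $\mu^{\ast}(\Delta_\rho[0,-1]^{r}\rtimes\pi)=M^{\ast}(\Delta_\rho[0,-1]^{r})\rtimes\mu^{\ast}(\pi)$, and on that edge the left-hand $\GL$-factors produced by $M^{\ast}(\Delta_\rho[0,-1]^{r})$ carry only $\rho|\cdot|^{0}$ and $\rho|\cdot|^{1}$; so a $\rho|\cdot|^{-1}$ at the left of a $\GL$-factor would have to come from $\mu^{\ast}(\pi)$, which is impossible since $D_{\rho|\cdot|^{-1}}^{(1)}(\pi)=0$. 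Hence Lemma~\ref{[01]-red} applies to $\pi'$ too, and $\pi'\hookrightarrow\Delta_\rho[0,-1]^{r+k}\times\rho^{k_0-k}\rtimes\pi_1$. Since $\pi'$ is irreducible, its image lies in $\Delta_\rho[0,-1]^{r+k}\rtimes\Pi_0$ for a single indecomposable summand $\Pi_0$ of $\rho^{k_0-k}\rtimes\pi_1$, and, using the $\rho|\cdot|^{-1}$-reducedness of $\pi'$ and Lemma~\ref{surj} to convert the occurrence of $\Delta_\rho[0,-1]^{r+k}$ in the $\GL$-part of $\Jac_{P_{2d(r+k)}}(\pi')$ into an embedding, $\pi'\hookrightarrow\Delta_\rho[0,-1]^{r+k}\rtimes\sigma'$ for an irreducible $\sigma'\hookrightarrow\Pi_0$. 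Comparing with the case $r=0$ (where $\pi$ itself is such a subrepresentation) shows that $\Pi_0$ and $\sigma'$ do not depend on $\pi'$, so $\pi'$ is uniquely determined: $\soc(\Delta_\rho[0,-1]^{r}\rtimes\pi)$ is irreducible. The multiplicity-one assertion then follows, as in Proposition~\ref{SI}, by comparing highest $[0,-1]$-derivatives of $\Delta_\rho[0,-1]^{r}\rtimes\pi$ and of its socle; and taking $r=0$ gives that $D_{[0,-1]}^{(k)}(\pi)$ is irreducible (equal to $\sigma'$).

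The main obstacle --- and essentially the only difference from Proposition~\ref{SI} --- is that $\rho$ is self-dual, so $\rho^{k_0-k}\rtimes\pi_1$ need not be SI, nor even indecomposable, and the relevant irreducible piece $\sigma'$ cannot be located by a socle argument. Two inputs handle this: that the single irreducible representation $\pi$ can only ``see'' one indecomposable summand of $\rho^{k_0-k}\rtimes\pi_1$, and that Lemma~\ref{surj} --- whose conclusion can fail on self-dual lines, which is exactly why the $[0,\pm1]$-derivatives were introduced --- supplies the combinatorial rigidity needed to pin $\sigma'$ down inside that summand. A related, purely computational nuisance is that the Tadi{\'c}-formula analysis of $D_{[0,-1]}^{(N)}$ of an induced representation does not collapse to a single term as the $\rho$-derivative identities did in Proposition~\ref{SI}: the left and right derivatives of $\Delta_\rho[0,-1]^{N}$ carry $\rho|\cdot|^{0}$ and $\rho|\cdot|^{-1}$, and these can recombine with the $\rho$-content already present in $\mu^{\ast}$ of the classical-group factor, so one must check that no spurious $\Delta_\rho[0,-1]^{N}$ term is produced.
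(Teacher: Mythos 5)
Your proposal assembles the right ingredients (Lemma \ref{surj} plus Tadi\'c's formula) but stops short of the one computation that actually proves the proposition. The paper's argument is: by Lemma \ref{surj} and Frobenius reciprocity, $\pi \hookrightarrow \Delta_\rho[0,-1]^{k}\rtimes\pi_0$ for some irreducible constituent $\pi_0$ of $D^{(k)}_{[0,-1]}(\pi)$, which is again $\rho|\cdot|^{-1}$-reduced; then Tadi\'c's formula gives the \emph{multiplicity-one identity}
\[
D^{(k)}_{[0,-1]}\bigl(\Delta_\rho[0,-1]^{k}\rtimes\pi_0\bigr)=\pi_0 ,
\]
and this single identity yields everything: $D^{(k)}_{[0,-1]}(\pi)=\pi_0$ is irreducible, $\Delta_\rho[0,-1]^{k}\rtimes\pi_0$ is SI, and for general $r$ every irreducible subrepresentation of $\Delta_\rho[0,-1]^{r}\rtimes\pi$ sits inside $\Delta_\rho[0,-1]^{r+k}\rtimes\pi_0$, which is SI by the same identity with $r+k$ in place of $k$. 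In your write-up this identity is exactly the point you set aside in the last paragraph as ``a purely computational nuisance \dots\ one must check that no spurious $\Delta_\rho[0,-1]^{N}$ term is produced.'' That check is not a nuisance; it is the theorem. It is where the hypothesis that $\pi$ (hence $\pi_0$, resp.\ $\sigma'$) is $\rho|\cdot|^{-1}$-reduced is used to kill the cross terms of $M^\ast(\Delta_\rho[0,-1]^{N})\rtimes\mu^\ast(\pi_0)$, and without it neither the irreducibility of the highest derivative nor the multiplicity-one part of SI follows.

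Two further points. First, the step ``Comparing with the case $r=0$ \dots\ shows that $\Pi_0$ and $\sigma'$ do not depend on $\pi'$, so $\pi'$ is uniquely determined'' is not an argument and is in fact circular: even granting that every irreducible $\pi'\hookrightarrow\Delta_\rho[0,-1]^{r}\rtimes\pi$ embeds into $\Delta_\rho[0,-1]^{r+k}\rtimes\sigma'$ with the \emph{same} $\sigma'$ (which you do not show --- different $\pi'$ could a priori produce different constituents $\sigma'$ of the $(r+k)$-th derivative), this determines $\pi'$ only if one already knows that $\soc(\Delta_\rho[0,-1]^{r+k}\rtimes\sigma')$ is irreducible, which is the statement being proved. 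The only way to break the circle is the derivative identity above: if the $(r+k)$-th $[0,-1]$-derivative of the induced representation is $\sigma'$ with multiplicity one, then at most one irreducible constituent can be a subrepresentation, and it occurs once. Second, the detour through Lemma \ref{[01]-red} and the indecomposable summands $\Pi_0$ of $\rho^{k_0-k}\rtimes\pi_1$ buys you nothing (and the claim that $\sigma'\hookrightarrow\Pi_0$ is unjustified); the paper does not need Lemma \ref{[01]-red} here at all. Your preliminary observation that any irreducible subrepresentation of $\Delta_\rho[0,-1]^{r}\rtimes\pi$ is again $\rho|\cdot|^{-1}$-reduced is correct (though note that the left factors of $M^\ast(\Delta_\rho[0,-1]^{r})$ do contain $\rho|\cdot|^{-1}$ in higher degree, e.g.\ the term $\Delta_\rho[0,-1]\otimes 1$; only the degree-$d$ left factors avoid it), but it is not needed once one argues as the paper does.
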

\begin{proof}
We prove the assertions only for $[0,1]$. 
By the previous lemma, 
there exists an irreducible subrepresentation of $\pi_{[0,1]}$ of 
the highest $[0,1]$-derivative $D_{[0,1]}^{(k)}(\pi)$ such that 
\[
\Jac_{P_{2dk}}(\pi) \twoheadrightarrow Z_\rho[0,1]^k \boxtimes \pi_0, 
\]
or equivalently, 
\[
\pi \hookrightarrow Z_\rho[0,1]^k \rtimes \pi_0. 
\]
Since $\pi$ is $\rho|\cdot|^1$-reduced, so is $\pi_0$. 
Hence by Tadi{\'c}'s formula (Proposition \ref{eq:Tadic}) 
for $\left[\Jac_{P_{2dk}}(Z_\rho[0,1]^k \rtimes \pi_0)\right]$, 
we see that 
\[
D_{[0,1]}^{(k)}(Z_\rho[0,1]^k \rtimes \pi_0) = \pi_0. 
\]
Hence $0 \not= D_{[0,1]}^{(k)}(\pi) \subset \pi_0$ so that $D_{[0,1]}^{(k)}(\pi) = \pi_0$.
Moreover, it implies that $Z_\rho[0,1]^k \rtimes \pi_0$ is SI. 
\par

When $\pi'$ is an irreducible subrepresentation of $Z_\rho[0,1]^r \rtimes \pi$, 
we have $\pi' \subset \soc(Z_\rho[0,1]^{k+r} \rtimes \pi_0)$. 
In particular, $\pi'$ is unique and appears with multiplicity one in $\left[Z_\rho[0,1]^{k+r} \rtimes \pi_0\right]$, 
hence in $\left[Z_\rho[0,1]^r \rtimes \pi\right]$. 
Therefore, $Z_\rho[0,1]^r \rtimes \pi$ is SI.
\end{proof}

For simplicity, we set 
\[
S_{[0,1]}^{(r)}(\pi) = S_{Z_\rho[0,1]}^{(r)}(\pi) \coloneqq \soc(Z_\rho[0,1]^r \rtimes \pi)
\]
for an irreducible representation $\pi$ of $G_n$ which is $\rho|\cdot|^1$-reduced. 
\par

The highest $[0,-1]$-derivatives are easy in a special case. 
\begin{prop}\label{der-0-1}
Let $\pi = L(\Delta_{\rho_1}[x_1,y_1], \dots, \Delta_{\rho_r}[x_r,y_r]; \pi_\temp)$ 
be an irreducible representation of $G_n$. 
Suppose that $\pi$ is $\rho|\cdot|^z$-reduced  for all $z \not= 0$
and that there exists $i \in \{1, \dots, r\}$ such that $\rho_i \cong \rho$. 
Then $\min\{x_i \;|\; \rho_i \cong \rho\} = 0$, 
and the highest $[0,-1]$-derivative $D_{[0,-1]}^{(k)}(\pi)$ of $\pi$ is given by 
\[
D_{[0,-1]}^{(k)}(\pi) = L(\Delta_{\rho_1}[z_1,y_1], \dots, \Delta_{\rho_r}[z_r,y_r]; \pi_\temp)
\]
with
\[
z_i = \left\{
\begin{aligned}
&-2 \iif \rho_i \cong \rho, x_i = 0, \\
&x_i \other.
\end{aligned}
\right. 
\]
In particular, 
\[
k = |\{i \in \{1, \dots, r\} \;|\; \rho_i \cong \rho,\; x_i = 0\}| \geq 1.
\]
\end{prop}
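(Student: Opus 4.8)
The plan is to realise $\pi$ as a subrepresentation of its standard module, extract from this the relevant $\rho|\cdot|^{-1}$--information via Tadi\'c's formula, and then feed it into Lemma~\ref{[01]-red} and Proposition~\ref{010-1}. \textbf{Step 1 (the minimum is $0$).} Fix an embedding $\pi\hookrightarrow\Delta_{\rho_1}[x_1,y_1]\times\dots\times\Delta_{\rho_r}[x_r,y_r]\rtimes\pi_\temp$ with $x_1+y_1\le\dots\le x_r+y_r<0$. Among the $i$ with $\rho_i\cong\rho$ choose $i_0$ with $x_{i_0}$ minimal. For $j<i_0$: if $\rho_j\not\cong\rho$ then (all $\rho_i$ being unitary) $\Delta_{\rho_j}[x_j,y_j]$ and $\Delta_\rho[x_{i_0},y_{i_0}]$ are supported on disjoint cuspidal lines; if $\rho_j\cong\rho$ then $x_j\ge x_{i_0}$ by minimality and $y_j\le(x_j+y_j)-x_{i_0}\le y_{i_0}$ by the Langlands ordering, so $[x_{i_0},y_{i_0}]\subseteq[x_j,y_j]$. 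In both cases $\Delta_{\rho_j}[x_j,y_j]\times\Delta_\rho[x_{i_0},y_{i_0}]$ is irreducible, so $\Delta_\rho[x_{i_0},y_{i_0}]$ may be moved to the front, giving $\pi\hookrightarrow\Delta_\rho[x_{i_0},y_{i_0}]\times(\cdots)\rtimes\pi_\temp\hookrightarrow\rho|\cdot|^{x_{i_0}}\times(\cdots)\rtimes\pi_\temp$, hence $D^{(1)}_{\rho|\cdot|^{x_{i_0}}}(\pi)\neq0$. As $\pi$ is $\rho|\cdot|^{z}$--reduced for every $z\neq0$, this forces $x_{i_0}=0$, which is the first assertion; in particular $k:=|\{i\mid\rho_i\cong\rho,\ x_i=0\}|\ge1$.

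\textbf{Step 2 ($\pi_\temp$ is inert).} Since $\pi_\temp$ is tempered it is self-dual, so $\pi_\temp^{\MVW}\cong\pi_\temp$; combining Casselman's temperedness criterion with the $\MVW$--functor shows that any $\tau\boxtimes\pi'$ occurring in a Jacquet module of $\pi_\temp$ along a maximal parabolic has $\tau$ of central exponent $0$. Hence $\pi_\temp$ is $\rho|\cdot|^{z}$--reduced for all $z\neq0$, and, as $\Delta_\rho[0,-1]$ has central exponent $-\tfrac12\neq0$, also $\Delta_\rho[0,-1]$--reduced; a short Tadi\'c's--formula argument (pulling $\Delta_\rho[0,-1]$ off the front of $\rho^{t}\times(\rho|\cdot|^{-1})^{s}$) then forces $D^{\max}_\rho(\pi_\temp)$ to be $\rho|\cdot|^{-1}$--reduced as well. \textbf{Step 3 (the candidate).} Let $\sigma_0:=L(\Delta_{\rho_1}[z_1,y_1],\dots,\Delta_{\rho_r}[z_r,y_r];\pi_\temp)$ be the representation in the statement. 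By Step~1 every $\rho$--line segment of $\sigma_0$ has top exponent equal to $-2$ or $\ge1$, never $-1$; together with Step~2 this gives that $\sigma_0$ is $\rho|\cdot|^{-1}$--reduced, and a similar inspection gives that it is $\Delta_\rho[0,-1]$--reduced. By Proposition~\ref{010-1}, $\Delta_\rho[0,-1]^{k}\rtimes\sigma_0$ is therefore SI.

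\textbf{Step 4 (conclusion).} It remains to prove $\pi\hookrightarrow\Delta_\rho[0,-1]^{k}\rtimes\sigma_0$. Granting this, $\pi=\soc(\Delta_\rho[0,-1]^{k}\rtimes\sigma_0)$ by Step~3, and reading off the $\Delta_\rho[0,-1]$--derivative exactly as in the proof of Proposition~\ref{010-1} (using that $\sigma_0$ is $\rho|\cdot|^{-1}$--reduced) yields $D^{(k)}_{[0,-1]}(\pi)=\sigma_0$; since $\sigma_0$ is $\Delta_\rho[0,-1]$--reduced, Tadi\'c's formula gives $D^{(k+1)}_{[0,-1]}(\pi)=0$, so this is the highest $[0,-1]$--derivative and $k$ is the integer in the statement. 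To obtain the embedding one starts from $\pi\hookrightarrow\prod_i\Delta_{\rho_i}[x_i,y_i]\rtimes\pi_\temp$, writes $\Delta_\rho[0,y_i]\hookrightarrow\Delta_\rho[0,-1]\times\Delta_\rho[-2,y_i]$ for the $k$ special indices, and identifies the socle of the resulting induced representation with $\pi$ by comparing Jacquet modules through Tadi\'c's formula and invoking the SI statement of Step~3; alternatively one applies Lemma~\ref{[01]-red} with $\epsilon=-1$, reducing the highest $[0,-1]$--derivative to the highest $\rho$-- and then highest $\rho|\cdot|^{-1}$--derivative, which by Steps~1--2 come precisely from shortening each $\Delta_\rho[0,y_i]$ to $\Delta_\rho[-1,y_i]$ and then to $\Delta_\rho[-2,y_i]$, the $\rho$'s produced from $\pi_\temp$ being reabsorbed into the tempered factor.

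\textbf{Main obstacle.} The delicate point is exactly the embedding $\pi\hookrightarrow\Delta_\rho[0,-1]^{k}\rtimes\sigma_0$ in Step~4, equivalently the identification of $\soc(\Delta_\rho[0,-1]^{k}\rtimes\sigma_0)$. The naive manoeuvre — splitting each $\Delta_\rho[0,y_i]$ as $\soc(\Delta_\rho[0,-1]\times\Delta_\rho[-2,y_i])$ and collecting the $k$ copies of $\Delta_\rho[0,-1]$ at the front — fails because $\Delta_\rho[0,-1]$ and $\Delta_\rho[-2,y_i]$ are linked, so one must instead work through socle--irreducibility together with a Jacquet--module bookkeeping using Tadi\'c's formula, exploiting the strong reducedness hypothesis on $\pi$ to discard every competing constituent of the standard module and to reabsorb the $\rho$'s coming from $\pi_\temp$ into the tempered part without altering the socle. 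Steps~1--3 are by comparison routine.
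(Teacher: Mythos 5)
Your Steps 1--3 track the paper's proof fairly closely and are essentially sound, apart from a slip in Step 2: Casselman's criterion gives that the $\GL$-exponents occurring in Jacquet modules of $\pi_\temp$ are $\geq 0$, not $=0$, so temperedness only makes $\pi_\temp$ $\rho|\cdot|^z$-reduced for $z<0$ (it can perfectly well fail to be $\rho|\cdot|^1$-reduced). What you actually need, and what the paper uses, is that $D_\rho^{\max}(\pi_\temp)$ is again tempered (by \cite[Theorem 4.2]{At1}) and hence $\rho|\cdot|^{-1}$-reduced; Lemma \ref{[01]-red} then yields both the value of $k$ and the fact that $D_{[0,-1]}^{(k)}(\pi)$ is the highest $[0,-1]$-derivative.

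The genuine gap is the one you flag yourself: the identification $D^{(k)}_{[0,-1]}(\pi)=\sigma_0$ is never proved, and neither of your two sketches closes it. Route (B) via Lemma \ref{[01]-red} only produces the integer $k$ --- that lemma says nothing about what $D^{(k)}_{[0,-1]}(\pi)$ \emph{is}, and the composite of the highest $\rho$- and $\rho|\cdot|^{-1}$-derivatives is not the $\Delta_\rho[0,-1]$-derivative. Route (A) is either circular (identifying $\soc(\Delta_\rho[0,-1]^{k}\rtimes\sigma_0)$ with $\pi$ is exactly the embedding you are trying to establish) or left unexecuted. The paper sidesteps the linked-segment obstruction you worry about by never commuting $\Delta_\rho[0,-1]$ past $\Delta_\rho[-2,y_i]$ inside a standard module: it works with the Langlands \emph{quotients}, invoking the purely $\GL$-theoretic embedding $\tau=L(\Delta_{\rho_1}[x_1,y_1],\dots)\hookrightarrow\Delta_\rho[0,-1]^{k}\times\tau'$ with $\tau'=L(\Delta_{\rho_1}[z_1,y_1],\dots)$ (a socle/derivative statement in the spirit of \cite[Theorem 5.11, Proposition 5.6]{LM}), which gives $\pi\hookrightarrow\Delta_\rho[0,-1]^{k}\times\tau'\rtimes\pi_\temp$. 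Frobenius reciprocity then produces a nonzero map $\Delta_\rho[0,-1]^{k}\boxtimes D^{(k)}_{[0,-1]}(\pi)\to\Delta_\rho[0,-1]^{k}\boxtimes(\tau'\rtimes\pi_\temp)$; since $D^{(k)}_{[0,-1]}(\pi)$ is irreducible by Proposition \ref{010-1} applied to $\pi$ itself (which is $\rho|\cdot|^{-1}$-reduced by hypothesis --- you instead apply that proposition to $\sigma_0$, which is legitimate but does not substitute for this step) and $\tau'\rtimes\pi_\temp$ is SI, one concludes $D^{(k)}_{[0,-1]}(\pi)=\soc(\tau'\rtimes\pi_\temp)=\sigma_0$. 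This Frobenius-reciprocity-plus-irreducible-derivative device, applied to the full induced representation $\tau'\rtimes\pi_\temp$ rather than to its socle, is the missing idea in your proposal.
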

\begin{proof}
With $x \coloneqq \min\{x_i \;|\; \rho_i \cong \rho\}$, we see that $\pi$ is not $\rho|\cdot|^x$-reduced. 
Hence we must have $x = 0$. 
Moreover, we note that if $\rho_i \cong \rho$ and $x_i = 0$, then $y_i \leq -1$ since $x_i+y_i < 0$. 
\par

Remark that $D_\rho^{(l)}(\pi_\temp)$ is tempered since $\rho$ is self-dual (\cite[Theorem 4.2 (1), (4)]{At1}), 
so that $D_\rho^{(l)}(\pi_\temp)$ is $\rho|\cdot|^{-1}$-reduced by Casselman's criterion (see e.g., \cite[Lemma 2.4]{K}). 
Hence by Lemma \ref{[01]-red}, with $k$ as in the statement, 
$D_{[0,-1]}^{(k)}(\pi)$ is the highest $[0,-1]$-derivative. 
\par

Set $\tau \coloneqq L(\Delta_{\rho_1}[x_1,y_1], \dots, \Delta_{\rho_r}[x_r,y_r])$.
Then $\pi \hookrightarrow \tau \rtimes \pi_\temp$. 
Since $\min\{x_i \;|\; \rho_i \cong \rho\} = 0$ and $y_i < 0$, 
we see that $\tau \hookrightarrow \Delta_{\rho}[0,-1]^k \times \tau'$ 
with $\tau' \coloneqq L(\Delta_{\rho_1}[z_1,y_1], \dots, \Delta_{\rho_r}[z_r,y_r])$.
Hence
\[
\pi \hookrightarrow \Delta_{\rho}[0,-1]^k \times \tau' \rtimes \pi_\temp. 
\]
By the Frobenius reciprocity, we have a nonzero map
\[
\Jac_{P_{2dk}}(\pi) \rightarrow \Delta_{\rho}[0,-1]^k \boxtimes (\tau' \rtimes \pi_\temp), 
\]
which must factor through a nonzero map
\[
\Delta_{\rho}[0,-1]^k \boxtimes D_{[0,-1]}^{(k)}(\pi) \rightarrow 
\Delta_{\rho}[0,-1]^k \boxtimes (\tau' \rtimes \pi_\temp). 
\]
Since $D_{[0,-1]}^{(k)}(\pi)$ is irreducible by Proposition \ref{010-1}, 
and since $\tau' \rtimes \pi_\temp$ is SI, 
we deduce that 
\[
D_{[0,-1]}^{(k)}(\pi) = \soc(\tau' \rtimes \pi_\temp).
\]
This completes the proof. 
\end{proof}

\subsection{Zelevinsky--Aubert duality and derivatives}\label{aubert}
We deduce the following compatibility between derivatives and duality. 
\begin{prop}
Let $\pi \in \Irr(G_n)$ and $\rho \in \Cusp(\GL_d(F))$. 
\begin{enumerate}
\item 
If $D_{\rho}^{(k)}(\pi)$ is the highest $\rho$-derivative, then 
\[
D_{\rho}^{(k)}(\pi){\widehat{\;}} = D_{\rho^\vee}^{(k)}(\hat\pi).
\]

\item 
If $\rho$ is self-dual, $\pi$ is $\rho|\cdot|^{-1}$-reduced and 
$D_{\Delta_\rho[0,-1]}^{(k)}(\pi)$ is the highest $\Delta_\rho[0,-1]$-derivative, 
then
\[
D_{\Delta_\rho[0,-1]}^{(k)}(\pi){\widehat{\;}} = D_{Z_\rho[0,1]}^{(k)}(\hat\pi).
\]
\end{enumerate}
\end{prop}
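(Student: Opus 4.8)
The plan is to deduce both parts from the master compatibility formula \eqref{eq:Jacquet2}, which says $\mu^\ast \circ D^G = d^G \circ \mu^\ast$ where $d^G(\tau \otimes \pi') = \hat\tau^\vee \otimes \hat\pi'$. The underlying principle in both cases is the same: the highest $\rho$-derivative (resp.\ highest $\Delta_\rho[0,-1]$-derivative) can be read off from the term of $\mu^\ast(\pi)$ whose $\GL$-factor is $\rho^k$ (resp.\ $\Delta_\rho[0,-1]^k$) and is \emph{maximal} in $k$; applying $d^G$ turns that $\GL$-factor into $(\rho^k){\widehat{\;}}^\vee = (\rho^\vee)^k$ (resp.\ into $(\Delta_\rho[0,-1]^k){\widehat{\;}}^\vee$), and since the Zelevinsky--Aubert dual on $\GL$ sends $\Delta_\rho[0,-1] = \Delta_\rho[0,-1]$ to $Z_\rho[0,1]$ and then $^\vee$ sends $Z_{\rho^\vee}[-1,0]$ back to $Z_\rho[0,1]$ (using $\rho$ self-dual), we land on the $Z_\rho[0,1]^k$-isotypic piece of $\mu^\ast(\hat\pi)$, which by definition computes $D_{Z_\rho[0,1]}^{(k)}(\hat\pi)$.

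For part (1): let $D_\rho^{(k)}(\pi)$ be the highest $\rho$-derivative, so that in $\mu^\ast(\pi)$ the coefficient of $\rho^k \boxtimes (-)$ is $D_\rho^{(k)}(\pi)$ and no term $\rho^{k+1} \boxtimes (-)$ occurs. Apply $\mu^\ast \circ D^G = d^G \circ \mu^\ast$ to $[\pi]$. On the right-hand side, $d^G$ carries the term $\rho^k \boxtimes D_\rho^{(k)}(\pi)$ to $(\rho^\vee)^k \boxtimes D_\rho^{(k)}(\pi){\widehat{\;}}$ (note $(\rho^k){\widehat{\;}} = \rho^k$ since $\rho$ is supercuspidal, hence its $\GL$-dual is itself, and then the contragredient gives $(\rho^\vee)^k$). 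I must check that no \emph{other} term of $d^G \circ \mu^\ast([\pi])$ contributes a $(\rho^\vee)^k \boxtimes (-)$ summand and that no $(\rho^\vee)^{k+1} \boxtimes(-)$ term appears: this follows because $d^G$ is a bijection on the relevant bases of $\Rr^\GL \otimes \Rr^G$ (it is $\tau \mapsto \hat\tau^\vee$, an involution on $\Irr^\GL$, tensored with an involution on $\Irr^G$), so the $(\rho^\vee)^j$-graded pieces of $\mu^\ast(\hat\pi)$ correspond exactly to the $\rho^j$-graded pieces of $\mu^\ast(\pi)$. Reading off the coefficient of $(\rho^\vee)^k \boxtimes(-)$ in $\mu^\ast(\hat\pi)$ — which is by definition $D_{\rho^\vee}^{(k)}(\hat\pi)$, and is the \emph{highest} one since the $\rho^{k+1}$-piece vanished — gives $D_{\rho^\vee}^{(k)}(\hat\pi) = D_\rho^{(k)}(\pi){\widehat{\;}}$.

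For part (2): run the identical argument with $\rho^k$ replaced by $\Delta_\rho[0,-1]^k$. The key input is the computation of $d^G$ on this $\GL$-factor: $\Delta_\rho[0,-1]^k {\widehat{\;}} = Z_\rho[0,1]^k$ by the $\GL$-case of the duality (the Zelevinsky--Aubert dual of a Steinberg is the corresponding Zelevinsky/trivial-type representation, and it is multiplicative on the independent segment $\Delta_\rho[0,-1]$), and then $(Z_\rho[0,1]^k)^\vee = Z_{\rho^\vee}[-1,0]^k = Z_\rho[0,1]^k$ using that $\rho$ is self-dual; hence $d^G$ sends $\Delta_\rho[0,-1]^k \boxtimes D_{\Delta_\rho[0,-1]}^{(k)}(\pi)$ to $Z_\rho[0,1]^k \boxtimes D_{\Delta_\rho[0,-1]}^{(k)}(\pi){\widehat{\;}}$. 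One subtlety is that $\mu^\ast(\pi)$ also contains a $Z_\rho[0,1]^j \boxtimes(-)$ part, whose $d^G$-image is a $\Delta_\rho[0,-1]^j \boxtimes(-)$ part of $\mu^\ast(\hat\pi)$; since the hypothesis that $\pi$ is $\rho|\cdot|^{-1}$-reduced forces (via Tadi\'c's formula / the analysis in Lemma~\ref{[01]-red}) the $Z_\rho[0,1]$-part of $\mu^\ast(\pi)$ not to interfere with extracting the highest $\Delta_\rho[0,-1]$-derivative, the bijectivity of $d^G$ on bases again lets me match the $Z_\rho[0,1]^k$-graded piece of $\mu^\ast(\hat\pi)$ — which is $D_{Z_\rho[0,1]}^{(k)}(\hat\pi)$ — with the image of the $\Delta_\rho[0,-1]^k$-graded piece of $\mu^\ast(\pi)$. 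The main obstacle I anticipate is precisely this bookkeeping: showing that applying $d^G$ does not create spurious $\rho^\vee$- or $Z_\rho[0,1]$-isotypic contributions from other constituents of $\mu^\ast(\pi)$, and that $k$ remains maximal after dualizing (i.e.\ the vanishing of the $(k+1)$-st derivative is preserved). Both reduce to the observation that $d^G$ permutes the standard basis of $\Irr^\GL \otimes \Irr^G$, so the grading by powers of a fixed irreducible $\GL$-representation is transported to the grading by powers of its $\hat{(-)}^\vee$; with $\rho$ self-dual and $\Delta_\rho[0,-1]{\widehat{\;}}^\vee = Z_\rho[0,1]$ this closes the argument.
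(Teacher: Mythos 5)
Your proposal is correct and follows exactly the paper's route: the paper deduces both parts in one line from the compatibility $\mu^\ast\circ D^G=d^G\circ\mu^\ast$ of \eqref{eq:Jacquet2}, and your elaboration (that $d^G$ permutes the basis of $\Irr^\GL\otimes\Irr^G$ via $\tau\mapsto\hat\tau^{\vee}$, sending $\rho^k$ to $(\rho^\vee)^k$ and $\Delta_\rho[0,-1]^k$ to $Z_\rho[0,1]^k$, so isotypic pieces and the maximality of $k$ are transported) is precisely the bookkeeping the paper leaves implicit. Only a cosmetic slip: the $\GL$-dual of $\Delta_\rho[0,-1]$ is $Z_\rho[-1,0]$, whose contragredient is $Z_{\rho^\vee}[0,1]=Z_\rho[0,1]$; your final composite $\Delta_\rho[0,-1]{\widehat{\;}}^{\vee}=Z_\rho[0,1]$ is the right one.
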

\begin{proof}
This is a consequence of the commutativity of the Jacquet functor with the duality, see \eqref{eq:Jacquet2}. 
\end{proof}

\section{The algorithm}\label{sec:algo}
Now we give an algorithm to compute the Zelevinsky--Aubert dual of an irreducible representation $\pi$. 
Thanks to Jantzen decomposition (see Paragraph \ref{sub:Jantzen}), 
we can reduce $\pi$ to the case where $\pi$ is either ugly or of good or bad parity. 
Then we proceed as follows:

\begin{alg}\label{alg}
Assume that we can compute $\hat\pi_0$ for all irreducible representations of $G_{n_0}$ for $n_0 < n$.
Let $\pi$ be an irreducible representation of $G_n$. 

\begin{enumerate}
\item
If there exists $\rho \in \Cusp^\GL$ such that $\rho$ is not self-dual 
and such that $D_{\rho}^{(k)}(\pi)$ is the highest $\rho$-derivative with $k \geq 1$, 
then
\[
\hat\pi = S_{\rho^\vee}^{(k)}\left( D_{\rho}^{(k)}(\pi){\widehat{\;}} \right).
\]

\item
Otherwise, and if $\pi$ is not tempered, 
then one can find $\rho \in \Cusp^\GL$ such that $\rho$ is self-dual and 
$D_{\Delta_\rho[0,-1]}^{(k)}(\pi)$ is the highest $\Delta_\rho[0,-1]$-derivative with $k \geq 1$. 
Then 
\[
\hat\pi = S_{Z_\rho[0,1]}^{(k)}\left( D_{\Delta_\rho[0,-1]}^{(k)}(\pi){\widehat{\;}} \right).
\]

\item
Otherwise, and if $\pi$ is tempered, 
then we can use an explicit formula for $\hat\pi$ (Proposition \ref{temp} below).
\end{enumerate}
\end{alg}

In order to run the algorithm we establish:
\begin{itemize}
\item
Explicit formulas for the highest $\rho$-derivative $D_{\rho}^{(k)}(\pi)$ and for the socle $S_{\rho}^{(k)}(\pi)$ 
for any $\rho \in \Cusp^\GL$ which is not self-dual. 
These are done in Proposition \ref{soc-negative} if $\rho$ is ugly or if the exponent of $\rho$ is negative, 
and in Theorem \ref{soc-positive} (\resp in Theorem \ref{bad-soc-positive}) if the exponent of $\rho$ is positive and 
if $\rho$ is in the good (\resp bad) case. 

\item
Explicit formulas for the $\Delta_\rho[0,-1]$-derivative $D_{\Delta_\rho[0,-1]}^{(k)}(\pi)$ 
and the socle $S_{Z_\rho[0,1]}^{(k)}(\pi)$ 
when $\rho$ is self-dual and $\pi$ is non-tempered and is $\rho|\cdot|^z$-reduced  for all $z \not= 0$. 
These are carried out in Proposition \ref{der-0-1} for the $\Delta_\rho[0,-1]$-derivative 
and in Theorem \ref{der-01} for the socle, respectively.

\item
an explicit formula for $\hat\pi$ 
when $\pi$ is tempered such that $\pi$ is 
$\rho|\cdot|^z$-reduced for all $z \not= 0$. This is done in Proposition \ref{temp}.
\end{itemize}

In the rest of the paper, we will prove all these formulas.

\section{The endoscopic classification}\label{endoscopy}
In Paragraphs \ref{sub:posgood} and \ref{subpos01} below, 
we will give explicit formulas for several derivatives and socles in the good parity case. 
In these formulas, certain special irreducible representations $\pi_A$ play an important and mysterious role.  
These special representations $\pi_A$ are \emph{of Arthur type}, 
and the mystery comes from Arthur's theory of the endoscopic classification \cite{Ar}. 
In this section, we review his theory. 

\subsection{$A$-parameters}
We denote by $W_F$ the Weil group of $F$.
A homomorphism 
\[
\psi \colon W_F \times \SL_2(\C) \times \SL_2(\C) \rightarrow \GL_n(\C)
\]
is called an \emph{$A$-parameter for $\GL_n(F)$} 
if 
\begin{itemize}
\item
$\psi(\Frob) \in \GL_n(\C)$ is semisimple and all its eigenvalues have absolute value $1$, 
where $\Frob$ is a fixed (geometric) Frobenius element;
\item
$\psi|W_F$ is smooth, i.e., has an open kernel; 
\item
$\psi|\SL_2(\C) \times \SL_2(\C)$ is algebraic.
\end{itemize}
The local Langlands correspondence for $\GL_d(F)$ asserts that 
there is a canonical bijection between
the set of irreducible unitary supercuspidal representations of $\GL_d(F)$
and 
the set of irreducible $d$-dimensional representations of $W_F$ of bounded image. 
We identify these two sets, and use the symbol $\rho$ for their elements. 
\par

Any irreducible representation of $W_F \times \SL_2(\C) \times \SL_2(\C)$ 
is of the form $\rho \boxtimes S_a \boxtimes S_b$, 
where $S_a$ is the unique irreducible algebraic representation of $\SL_2(\C)$ of dimension $a$.
We shortly write $\rho \boxtimes S_a = \rho \boxtimes S_a \boxtimes S_1$ 
and $\rho = \rho \boxtimes S_1 \boxtimes S_1$. 
For an $A$-parameter $\psi$, 
the multiplicity of $\rho \boxtimes S_a \boxtimes S_b$ in $\psi$
is denoted by $m_\psi(\rho \boxtimes S_a \boxtimes S_b)$.
When $\psi = \oplus_{i \in I} \rho_i \boxtimes S_{a_i} \boxtimes S_{b_i}$ is an $A$-parameter of $\GL_n(F)$, 
we define $\tau_\psi$ by the product of Speh representations (see Paragraph \ref{rep0})
\[
\tau_\psi \coloneqq \bigtimes_{i \in I} 
L\left( 
\Delta_{\rho_i}\left[ \half{a_i-b_i}, -\half{a_i+b_i}+1 \right], \dots, \Delta_{\rho_i}\left[\half{a_i+b_i}-1, -\half{a_i-b_i} \right]
\right).
\]
\par

Now we consider a split odd special orthogonal group $\SO_{2n+1}(F)$ 
or a symplectic group $\Sp_{2n}(F)$. 
We call $\psi$ an \emph{$A$-parameter for $\SO_{2n+1}(F)$} 
if it is an $A$-parameter for $\GL_{2n}(F)$ of symplectic type, i.e., 
\[
\psi \colon W_F \times \SL_2(\C) \times \SL_2(\C) \rightarrow \Sp_{2n}(\C). 
\]
Similarly, 
$\psi$ is called an \emph{$A$-parameter for $\Sp_{2n}(F)$} 
if it is an $A$-parameter for $\GL_{2n+1}(F)$ of orthogonal type with the trivial determinant, i.e.,
\[
\psi \colon W_F \times \SL_2(\C) \times \SL_2(\C) \rightarrow \SO_{2n+1}(\C). 
\]
For $G_n = \SO_{2n+1}(F)$ (\resp $G_n = \Sp_{2n}(F)$), 
we let $\Psi(G_n)$ be the set of $\widehat{G_n}$-conjugacy classes of $A$-parameters for $G_n$, 
where $\widehat{G_n} = \Sp_{2n}(\C)$ (\resp $\widehat{G_n} = \SO_{2n+1}(\C)$).
We say that 
\begin{itemize}
\item
$\psi \in \Psi(G_n)$ is \emph{tempered} 
if the restriction of $\psi$ to the second $\SL_2(\C)$ is trivial; 
\item
$\psi \in \Psi(G_n)$ is \emph{of good parity} 
if $\psi$ is a sum of irreducible self-dual representations of the same type as $\psi$.
\end{itemize}
We denote by $\Psi_\temp(G_n) \coloneqq \Phi_\temp(G_n)$ (\resp $\Psi_\gp(G_n)$)
the subset of $\Psi(G)$ consisting of tempered $A$-parameters 
(\resp $A$-parameters of good parity).
Also, we put $\Phi_\gp(G_n) \coloneqq \Phi_\temp(G_n) \cap \Psi_\gp(G_n)$. 
Set $\Psi_*(G) \coloneqq \cup_{n \geq 0}\Psi_*(G_n)$ and $\Phi_*(G) \coloneqq \cup_{n \geq 0}\Phi_*(G_n)$
for $* \in \{\emptyset, \temp, \gp\}$. 
\par

For $\psi \in \Psi(G)$, a component group $\Sc_\psi$ is defined. 
We recall the definition only when $\psi \in \Psi_\gp(G)$. 
Hence we can write $\psi = \oplus_{i=1}^{r} \psi_i$, 
where $\psi_i$ is an irreducible self-dual representation of the same type as $\psi$. 
We define an \emph{enhanced component group} $\AA_\psi$ as 
\[
\AA_\psi \coloneqq \bigoplus_{i=1}^r (\Z/2\Z)\alpha_{\psi_i}. 
\]
Namely, $\AA_\psi$ is a free $\Z/2\Z$-module of rank $r$
with a basis $\{\alpha_{\psi_i}\}$ associated with the irreducible components $\{\psi_i\}$. 
Define the \emph{component group} $\Sc_\psi$ as
the quotient of $\AA_\psi$ by the subgroup generated by the elements 
\begin{itemize}
\item
$z_\psi \coloneqq \sum_{i=1}^r \alpha_{\psi_i}$; and
\item
$\alpha_{\psi_i} + \alpha_{\psi_{i'}}$ such that $\psi_i \cong \psi_{i'}$.  
\end{itemize}
Let $\widehat{\Sc_\psi}$ and $\widehat{\AA_\psi}$ be the Pontryagin duals of $\Sc_\psi$ and $\AA_\psi$, 
respectively. 
Via the canonical surjection $\AA_\psi \twoheadrightarrow \Sc_\psi$, 
we may regard $\widehat{\Sc_\psi}$ as a subgroup of $\widehat{\AA_\psi}$. 
For $\eta \in \widehat{\AA_\psi}$, we write $\eta(\alpha_{\psi_i}) = \eta(\psi_i)$. 
\par

Let $\Irr_\unit(G_n)$ (\resp $\Irr_\temp(G_n)$) 
be the set of equivalence classes of irreducible unitary (\resp tempered) representations of $G_n$.
For $\psi \in \Psi(G_n)$, 
Arthur \cite[Theorem 2.2.1]{Ar} defined a multiset $\Pi_\psi$ over $\Irr_\unit(G_n)$, 
which is called the \emph{$A$-packet} for $G_n$ associated with $\psi$. 
It satisfies the following properties:

\begin{itemize}
\item
$\Pi_\psi$ is actually a (multiplicity-free) subset of $\Irr_\unit(G_n)$ (M{\oe}glin \cite{Moe3}). 

\item
There exists a map $\Pi_\psi \rightarrow \widehat{\Sc_\psi}$, $\pi \mapsto \pair{\cdot,\pi}_\psi$. 
If $\phi \in \Phi_\temp(G)$, it is a bijection. 
When $\pi \in \Pi_\phi$ corresponds to $\eta \in \widehat{\Sc_\phi}$, 
we write $\pi = \pi(\phi, \eta)$.

\item
There is a canonical decomposition into a disjoint union 
\[
\Irr_\temp(G_n) = \bigsqcup_{\phi \in \Phi_\temp(G_n)}\Pi_\phi.
\]

\item
If $\psi = \psi_1 \oplus \psi_0 \oplus \psi_1^\vee$ for some irreducible representation $\psi_1$, 
then there exists a canonical injection $\Sc_{\psi_0} \hookrightarrow \Sc_{\psi}$, and 
\[
\tau_{\psi_1} \rtimes \pi_0 \cong 
\bigoplus_{\substack{\pi \in \Pi_\psi \\ \pair{\cdot, \pi}_\psi|\Sc_{\psi_0} = \pair{\cdot, \pi_0}_{\psi_0}}}
\pi.
\]
for every $\pi_0 \in \Pi_{\psi_0}$ (see \cite[Proposition 2.4.3]{Ar}).
\end{itemize}
\par

\begin{rem}\label{rem:good}
Let $\rho \in \Cusp^\GL$ be unitary and $x \geq 0$ be a real number. 
Then the following are equivalent: 
\begin{enumerate}
\item
For any $\pi(\phi, \eta)$ with $\phi \in \Phi_\gp(G)$ and $\eta \in \widehat{\Sc_\phi}$, 
there exists $m \in \Z$ such that $\rho|\cdot|^{x+m} \rtimes \pi(\phi, \eta)$ is reducible. 

\item
For some $\pi(\phi, \eta)$ with $\phi \in \Phi_\gp(G)$ and $\eta \in \widehat{\Sc_\phi}$, 
there exists $m \in \Z$ such that $\rho|\cdot|^{x+m} \rtimes \pi(\phi, \eta)$ is reducible. 

\item
$x \in (1/2)\Z$ and $\rho \boxtimes S_{2x+1}$ is self-dual of the same type as elements of $\Phi_\gp(G)$, i.e., 
\begin{itemize}
\item
$x \in \Z$ and $\rho$ is self-dual of the same type as elements of $\Phi_\gp(G)$; or 
\item
$x \in (1/2)\Z \setminus \Z$ and $\rho$ is self-dual of the opposite type to elements of $\Phi_\gp(G)$. 
\end{itemize}
\end{enumerate}
This follows, for example, from \cite[Th{\'e}or{\`e}me (i)]{MW} and \cite[Theorem 4.7]{J-irrr}.
In particular, $\rho|\cdot|^x$ is good in the sense of Definition \ref{def:good} if and only if 
$\rho \boxtimes S_{2x+1}$ is self-dual of the same type as elements of $\Phi_\gp(G)$. 
Also, an irreducible representation 
$\pi = L(\Delta_{\rho_1}[x_1,y_1], \dots, \Delta_{\rho_r}[x_r,y_r]; \pi_\temp)$
is of good parity
if and only if $\pi_\temp = \pi(\phi, \eta)$ with $\phi \in \Phi_\gp(G)$, 
and $\rho_i \boxtimes S_{2|x_i|+1}$ is self-dual of the same type as $\phi$ for all $i = 1, \dots, r$.
\end{rem}

\subsection{A special example}
Now, we consider a special $A$-parameter of the form
\[
\psi = \phi \oplus (\rho \boxtimes S_{2x} \boxtimes S_2)^{t}
\]
for $t \geq 1$, 
$\phi \in \Phi_\gp(G)$, and 
$x \in (1/2)\Z$ with $x > 0$ such that $\rho \boxtimes S_{2x+1}$ is self-dual of the same type as $\phi$.  
\par

For $l \in \Z/2\Z$ and for $\eta$ in a certain subset $\widehat{\Sc_{\psi,l}}$ in $\widehat{\Sc_\psi}$ (depending on $l$), 
we will define $\pi(\psi, l, \eta)$ as follows. 
When $l = 1$, 
we set 
$\widehat{\Sc_{\psi,1}} \coloneqq \widehat{\Sc_\phi} 
= \{ \eta \in \widehat{\Sc_\psi} \;|\; \eta(\rho \boxtimes S_{2x} \boxtimes S_2) = 1\}$, 
and
\[
\pi(\psi, 1, \eta) \coloneqq L(\Delta_\rho[x-1,-x]^t; \pi(\phi, \eta)). 
\]
When $l = 0$ and $x \geq 1$, we set $\widehat{\Sc_{\psi,0}}$ to be the subset of $\widehat{\Sc_\psi}$
consisting of $\eta$ satisfying 
\begin{itemize}
\item
$\eta(\rho \boxtimes S_{2x} \boxtimes S_2) = \eta(\rho \boxtimes S_{2x-1})$ 
if $\rho \boxtimes S_{2x-1} \subset \phi$; 
\item
$\eta(\rho \boxtimes S_{2x} \boxtimes S_2) = (-1)^{t}\eta(\rho \boxtimes S_{2x+1})$ 
if $\rho \boxtimes S_{2x+1} \subset \phi$; 
\item
$\eta(z_\phi) = (-1)^t$. 
\end{itemize}
When $l = 0$ and $x = 1/2$, 
we set $\widehat{\Sc_{\psi,0}}$ to be the subset of $\widehat{\Sc_\psi}$
consisting of $\eta$ satisfying 
\begin{itemize}
\item
$\eta(\rho \boxtimes S_{1} \boxtimes S_2) = -1$; 
\item
$\eta(\rho \boxtimes S_{2}) = (-1)^{t}$ if $\rho \boxtimes S_{2} \subset \phi$; 
\item
$\eta(z_\phi) = (-1)^t$. 
\end{itemize}
For $\eta \in \widehat{\Sc_{\psi,0}}$, we define 
\[
\pi(\psi, 0, \eta) \coloneqq L(\Delta_\rho[x-1,-x]^{t-1}; \pi(\phi + \rho \boxtimes (S_{2x-1}+S_{2x+1}), \eta)). 
\]
Here, we regard $\eta$ as a character of the component group of $\phi + \rho \boxtimes (S_{2x-1}+S_{2x+1})$
by setting 
\[
\left\{
\begin{aligned}
&\eta(\rho \boxtimes S_{2x-1}) = (-1)^t\eta(\rho \boxtimes S_{2x+1}) = \eta(\rho \boxtimes S_{2x} \boxtimes S_2) 
\iif x \geq 1, \\
&\eta(\rho \boxtimes S_{2}) = (-1)^{t} \iif x = 1/2.
\end{aligned}
\right. 
\] 
\par

By specifying M{\oe}glin's construction of $\Pi_\psi$, we have the following. 
\begin{prop}\label{moe}
Let $\psi = \phi \oplus (\rho \boxtimes S_{2x} \boxtimes S_2)^{t} \in \Psi_\gp(G)$ with $t \geq 1$. 
Then 
\[
\Pi_\psi = \left\{ \pi(\psi, l, \eta) \;\middle|\; l \in \Z/2\Z, \; \eta \in \widehat{\Sc_{\psi,l}} \right\}. 
\]
Moreover, 
the map $\Pi_\psi \rightarrow \widehat{\Sc_\psi}$ is given by $\pair{\cdot,\pi(\psi, l ,\eta)}_\psi = \varepsilon_{l,\eta}$, 
where
\begin{align*}
\varepsilon_{l,\eta}(\rho \boxtimes S_d) &= \eta(\rho \boxtimes S_d), \\
\varepsilon_{l,\eta}(\rho \boxtimes S_{2x} \boxtimes S_2) 
&= 
\left\{
\begin{aligned}
&(-1)^{l-1} \iif x \geq 1, \\
&\eta(\rho \boxtimes S_{1} \boxtimes S_2) \iif x = 1/2.
\end{aligned}
\right. 
\end{align*}
\end{prop}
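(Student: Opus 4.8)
The plan is to deduce Proposition~\ref{moe} from M\oe glin's explicit construction of the $A$-packet $\Pi_\psi$ for parameters of good parity (\cite{Moe3}), together with the properties of $A$-packets recalled above. The parameter $\psi = \phi \oplus (\rho \boxtimes S_{2x} \boxtimes S_2)^{t}$ has a single non-tempered Jordan block, $\rho \boxtimes S_{2x} \boxtimes S_2$, occurring with multiplicity $t$, and M\oe glin's recipe builds $\Pi_\psi$ out of two elementary operations on this block: the ``tempering'' operation, which collapses a copy of $\rho \boxtimes S_{2x} \boxtimes S_2$ onto the Steinberg piece $\Delta_\rho[x-1,-x]$ placed over the tempered packet of $\phi$; and the elementary move $\rho \boxtimes S_{2x} \boxtimes S_2 \rel \rho \boxtimes S_{2x-1} + \rho \boxtimes S_{2x+1}$, which replaces $\psi$ by a parameter $\psi^{-} = \phi + \rho \boxtimes (S_{2x-1} + S_{2x+1}) + (\rho \boxtimes S_{2x} \boxtimes S_2)^{t-1}$ of good parity (tempered when $t = 1$). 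The two families $\{\pi(\psi,1,\eta)\}$ and $\{\pi(\psi,0,\eta)\}$ of the statement are exactly the representations obtained by applying the tempering operation to all $t$ copies, respectively to $t-1$ of them while opening the last one by the move above; the index $l \in \Z/2\Z$ records which regime one is in, and the sets $\widehat{\Sc_{\psi,l}}$ are precisely the constraints M\oe glin's construction imposes on the character in each case.

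First I would recall M\oe glin's parametrization of $\Pi_\psi$ and specialize it to this $\psi$: since the second $\SL_2$-factor of the block enters with $b = 2$, there is essentially a single binary choice governing whether a given copy is tempered or opened, and after symmetrizing over the $t$ indistinguishable copies this reduces to the single parameter $l$. Second I would identify the representation in the $l = 1$ regime: using that $\rho \boxtimes S_{2x+1}$ is self-dual of the same type as $\phi$, so that the relevant standard module has the expected shape by Remark~\ref{rem:good}, and tracking Langlands data through the construction, one obtains $L(\Delta_\rho[x-1,-x]^{t};\pi(\phi,\eta))$ with $\eta$ ranging over $\widehat{\Sc_\phi} = \widehat{\Sc_{\psi,1}}$, and M\oe glin's sign rule gives the value of the character on $\rho \boxtimes S_{2x}\boxtimes S_2$ as $(-1)^{l-1} = +1$, or as $\eta(\rho\boxtimes S_1 \boxtimes S_2)$ when $x = 1/2$. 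Third, for the $l = 0$ regime I would apply the elementary move once, invoke the description of $\Pi_{\psi^-}$ — inductively on $t$, or, when $t = 1$, directly from Arthur's parabolic-induction property and the structure of tempered packets — and read off $L(\Delta_\rho[x-1,-x]^{t-1};\pi(\phi + \rho \boxtimes (S_{2x-1}+S_{2x+1}),\eta))$ together with its character, carefully transporting the canonical identification $\Sc_{\phi + \rho\boxtimes(S_{2x-1}+S_{2x+1})} \hookrightarrow \Sc_\psi$ and the matching of $\eta$ across the move. Finally I would compare cardinalities: the construction accounts for every member of $\Pi_\psi$, and the two families are disjoint because their members have distinct character values on $\rho\boxtimes S_{2x}\boxtimes S_2$ when $x \ge 1$, and distinct Langlands data when $x = 1/2$; this yields both the displayed description of $\Pi_\psi$ and the formula for $\pair{\cdot,\pi(\psi,l,\eta)}_\psi$.

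The main obstacle is the bookkeeping forced by the multiplicity $t > 1$ and by the degenerate cases $x = 1/2$ and $\rho\boxtimes S_{2x\mp 1}\subset\phi$. In these cases M\oe glin's construction does not allow $\eta$ to vary freely but imposes exactly the relations listed in the definition of $\widehat{\Sc_{\psi,0}}$: $\eta(z_\phi) = (-1)^t$, $\eta(\rho\boxtimes S_{2x}\boxtimes S_2) = \eta(\rho\boxtimes S_{2x-1})$ when $\rho\boxtimes S_{2x-1}\subset\phi$, $\eta(\rho\boxtimes S_{2x}\boxtimes S_2) = (-1)^t\eta(\rho\boxtimes S_{2x+1})$ when $\rho\boxtimes S_{2x+1}\subset\phi$, and the extra sign $\eta(\rho\boxtimes S_1\boxtimes S_2) = -1$ in the case $x = 1/2$. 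One must check that these are precisely the conditions ensuring that $\eta$ descends to a well-defined character $\varepsilon_{l,\eta}$ of $\Sc_\psi$ and that $(l,\eta)\mapsto\pi(\psi,l,\eta)$ is injective; and the powers $(-1)^{t}$ and $(-1)^{t-1}$ must be shown to arise by iterating M\oe glin's sign rule the correct number of times, which is where essentially all of the delicacy lies.
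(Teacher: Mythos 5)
Your proposal is correct and follows essentially the same route as the paper: the paper's proof also consists of specializing M{\oe}glin's explicit construction of $\Pi_\psi$ (in the form made explicit by Xu, \cite[\S 8]{X2}, and already computed for $x\geq 1$ in \cite[Proposition 3.13]{At2}) to this parameter, identifying the two regimes you call $l=1$ and $l=0$, and checking that the normalizing character in Xu's pairing is trivial so that $\pair{\cdot,\pi(\psi,l,\eta)}_\psi=\varepsilon_{l,\eta}$. The only point worth flagging is that the paper makes explicit the verification that Xu's correction character $\epsilon_\psi^{M/W}$ equals $\1$ here, which your sketch subsumes under ``M{\oe}glin's sign rule'' without isolating it.
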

\begin{proof}
The $A$-packet $\Pi_\psi$ was constructed by M{\oe}glin explicitly. 
See \cite[\S 8]{X2} for details. 
For $x \geq 1$, its construction was computed in \cite[Proposition 3.13]{At2}. 
The same calculation can be applied to $x = 1/2$. 
By \cite[Corollary 8.10]{X2}, the map $\Pi_\psi \rightarrow \widehat{\Sc_\psi}$ is given by 
$\pair{\cdot,\pi(\psi, l ,\eta)}_\psi = \varepsilon_{l,\eta} \cdot \epsilon_\psi^{M/W}$
for some character $\epsilon_\psi^{M/W} \in \widehat{\Sc_{\psi}}$.
By definition (\cite[Definitions 5.2, 5.5, 8.1]{X2}), 
one can easily see that $\epsilon_\psi^{M/W} = \1$ in our case. 
\end{proof}

Using this description, we obtain the formula for the highest $\rho|\cdot|^x$-derivatives and socles. 
\begin{thm}\label{der-special}
Fix $\phi \in \Phi_\gp(G)$ and write $m = m_{\phi}(\rho \boxtimes S_{2x+1})$ 
and $m' = m_{\phi}(\rho \boxtimes S_{2x-1})$. 
Consider $\psi = \phi \oplus (\rho \boxtimes S_{2x} \boxtimes S_2)^t \in \Psi_\gp(G)$ with $t \geq 0$. 
Let $\pi(\psi, l, \eta) \in \Pi_{\psi}$ be such that 
$\eta(\rho \boxtimes S_{2x-1})\eta(\rho \boxtimes S_{2x+1}) = (-1)^t$ 
if $mm' \not= 0$.
Here, if $x=1/2$, 
we formally understand that $m'=1$ and $\eta(\rho \boxtimes S_0) = 1$.
Let $s$ be a non-negative integer such that $s=0$ if $x=1/2$.
Then the highest $\rho|\cdot|^x$-derivative of $\soc((\rho|\cdot|^{-x})^{s} \rtimes \pi(\psi, l ,\eta))$ is given by
\begin{align*}
&D_{\rho|\cdot|^x}^{(m+\max\{s-m',0\})}\left(\soc\left((\rho|\cdot|^{-x})^{s} \rtimes \pi(\psi, l ,\eta)\right)\right) 
\\&= 
\soc\left( 
(\rho|\cdot|^{-x})^{\min\{s,m'\}} 
\rtimes \pi(\psi - (\rho \boxtimes S_{2x+1})^m + (\rho \boxtimes S_{2x-1})^m, l+m, \eta) 
\right),
\end{align*}
where we set $\eta(\rho \boxtimes S_{2x-1}) = (-1)^t \eta(\rho \boxtimes S_{2x+1})$. 
In particular, 
\begin{align*}
&S_{\rho|\cdot|^x}^{(1)}\left(\soc\left((\rho|\cdot|^{-x})^{s} \rtimes \pi(\psi, l ,\eta)\right)\right)
\\&= 
\left\{
\begin{aligned}
&\soc\left((\rho|\cdot|^{-x})^{s} \rtimes 
\pi(\psi - \rho \boxtimes S_{2x-1} + \rho \boxtimes S_{2x+1}, l-1, \eta)
\right)
\iif s < m', \\
&
\soc\left((\rho|\cdot|^{-x})^{s+1} \rtimes \pi(\psi, l ,\eta)\right)
\iif s \geq m', 
\end{aligned}
\right. 
\end{align*}
where we set $\eta(\rho \boxtimes S_{2x+1}) = (-1)^t \eta(\rho \boxtimes S_{2x-1})$. 
\end{thm}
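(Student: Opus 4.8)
The plan is to reduce the computation of the highest $\rho|\cdot|^x$-derivative of $\soc((\rho|\cdot|^{-x})^s \rtimes \pi(\psi,l,\eta))$ to a Jacquet-module computation that can be carried out using Proposition \ref{moe} together with Tadi\'c's formula (Proposition \ref{eq:Tadic}). First I would fix the notation $\pi_{s} \coloneqq \soc((\rho|\cdot|^{-x})^s \rtimes \pi(\psi,l,\eta))$; since $\rho|\cdot|^x \not\cong \rho|\cdot|^{-x}$ (as $x > 0$), Proposition \ref{SI} guarantees this socle is well defined and that $(\rho|\cdot|^{-x})^s \rtimes \pi(\psi,l,\eta)$ is SI. The strategy is to compute $\mu^\ast(\pi_s)$, or at least the part of it supported on powers of $\rho|\cdot|^x$ in the $\GL$-factor, by applying $\mu^\ast$ to the full induced representation $(\rho|\cdot|^{-x})^s \rtimes \pi(\psi,l,\eta)$ and using the multiplicity-one property of the socle to isolate the relevant summand. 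The description of $\Pi_\psi$ in Proposition \ref{moe}, together with the explicit Langlands data $\pi(\psi,l,\eta) = L(\Delta_\rho[x-1,-x]^t;\pi(\phi,\eta))$ (when $l=1$) or the analogous expression for $l=0$, converts everything into an explicit problem about parabolic induction from $\GL$-factors and tempered representations of smaller classical groups, where the $\rho|\cdot|^{\pm x}$-derivatives of $\pi(\phi,\eta)$ are controlled by \cite[Theorem 4.2]{At1} and Casselman's criterion.

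Next I would carry out a bookkeeping of the four relevant supercuspidal building blocks of the line $\Z_\rho$ near the exponent $x$: the segments $\Delta_\rho[x,\cdot]$, $\Delta_\rho[x-1,\cdot]$, the tempered contributions of $\rho\boxtimes S_{2x\pm1}$, and the role of the ``extra'' copy $(\rho\boxtimes S_{2x}\boxtimes S_2)$. The key combinatorial input is the sign condition $\eta(\rho\boxtimes S_{2x-1})\eta(\rho\boxtimes S_{2x+1}) = (-1)^t$ (when $mm'\neq0$): this is exactly the condition under which $\pi(\psi,l,\eta)$ has the expected, ``generic'' derivative behaviour, and it is what makes the formula uniform in $l$. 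I would then verify that applying $D_{\rho|\cdot|^x}$ once removes one copy of $\rho\boxtimes S_{2x+1}$ and produces one copy of $\rho\boxtimes S_{2x-1}$, toggling $l \mapsto l+1$ (this is the shape of the first displayed formula with $s=0$), by matching Jacquet modules on both sides; then iterate. The appearance of $\max\{s-m',0\}$ and $\min\{s,m'\}$ reflects the interaction between the $s$ copies of $\rho|\cdot|^{-x}$ that were adjoined and the $m'$ tempered copies of $\rho\boxtimes S_{2x-1}$ already present in $\phi$: as long as $s \le m'$ the extra $\rho|\cdot|^{-x}$'s can be ``absorbed'' without forcing further $\rho|\cdot|^x$-derivatives, whereas for $s > m'$ the surplus must be balanced. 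The ``In particular'' statement for $S^{(1)}_{\rho|\cdot|^x}$ then follows by taking socles of the appropriate induced representations and invoking the SI property (Propositions \ref{SI} and \ref{010-1} are the relevant tools); here one uses that $S^{(1)}_{\rho|\cdot|^x}$ is adjoint-dual to $D^{(1)}_{\rho|\cdot|^x}$ in the sense that $\soc(\rho|\cdot|^x \rtimes -)$ inverts the top derivative on the relevant representations.

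I expect the main obstacle to be the precise tracking of the component-group characters through M\oe glin's construction — specifically, showing that the character of the derivative is again of the form $\pi(\psi',l',\eta')$ with the \emph{same} $\eta$ (suitably reinterpreted on the component group of $\psi - (\rho\boxtimes S_{2x+1})^m + (\rho\boxtimes S_{2x-1})^m$) and with $l$ shifted by exactly $m$. This requires the explicit formula $\pair{\cdot,\pi(\psi,l,\eta)}_\psi = \varepsilon_{l,\eta}$ from Proposition \ref{moe} and a careful check that the sign $\varepsilon_{l,\eta}(\rho\boxtimes S_{2x}\boxtimes S_2) = (-1)^{l-1}$ (for $x\ge1$) transforms correctly under the derivative; the case $x=1/2$, where $m'$ is understood formally as $1$ and $\eta(\rho\boxtimes S_0)=1$, needs separate attention because there $\rho\boxtimes S_{2x-1} = \rho\boxtimes S_0$ does not literally occur and the packet $\Pi_\psi$ has the slightly different description recorded before Proposition \ref{moe}. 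A secondary technical point is to rule out that $D_{\rho|\cdot|^x}$ picks up unwanted contributions from subquotients of $(\rho|\cdot|^{-x})^s \rtimes \pi(\psi,l,\eta)$ other than the socle; this is handled by the multiplicity-one statement in Proposition \ref{SI} together with a count of $\rho|\cdot|^x$-derivatives via Tadi\'c's formula, exactly as in the proof of Proposition \ref{SI} itself.
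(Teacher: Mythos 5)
The paper's own proof of Theorem \ref{der-special} is essentially a citation: for $x\geq 1$ the derivative formula is \cite[Theorem 4.1]{At2} and for $x=1/2$ it is \cite[Theorem 3.3]{J-dual}, and the socle formula is then deduced from the derivative formula via the relation $D_{\rho|\cdot|^x}^{(k+1)}\bigl(S_{\rho|\cdot|^x}^{(1)}(\pi)\bigr)=D_{\rho|\cdot|^x}^{(k)}(\pi)$. You are therefore attempting something much more ambitious than the paper does, namely reproving those external results from scratch, and your outline leaves the actual content unestablished. The decisive step in your plan --- ``verify that applying $D_{\rho|\cdot|^x}$ once removes one copy of $\rho\boxtimes S_{2x+1}$ and produces one copy of $\rho\boxtimes S_{2x-1}$, toggling $l\mapsto l+1$, by matching Jacquet modules on both sides; then iterate'' --- is precisely the theorem, not a reduction of it. Matching Jacquet modules here requires running M{\oe}glin's explicit construction of $\pi(\psi,l,\eta)$ (the $\Jac$-composition description as in \cite[\S 8]{X2}) and tracking how the partial derivatives interact with it; you correctly identify this as ``the main obstacle'' but offer no argument for it. Moreover the iteration step is not innocent: the intermediate derivatives $D^{(j)}_{\rho|\cdot|^x}$ for $0<j<m+\max\{s-m',0\}$ need not be irreducible or of the form $\pi(\psi',l',\eta')$, the multiplicities $m,m'$ and the sign constraint $\eta(\rho\boxtimes S_{2x-1})\eta(\rho\boxtimes S_{2x+1})=(-1)^t$ change after each step, and the interaction between the $s$ adjoined copies of $\rho|\cdot|^{-x}$ and the $m'$ copies of $\rho\boxtimes S_{2x-1}$ (the source of $\min\{s,m'\}$ and $\max\{s-m',0\}$) is only described heuristically (``can be absorbed'', ``must be balanced'') rather than proved.

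The peripheral scaffolding of your proposal is sound: $\rho|\cdot|^{-x}$ is indeed non-self-dual for $x>0$, so Proposition \ref{SI} gives the SI property and multiplicity one of the socle; Tadi{\'c}'s formula isolates the $\rho|\cdot|^{x}$-isotypic part of the Jacquet module; and the passage from the derivative formula to the socle formula is exactly as you describe (this matches how the paper deduces the ``In particular'' part, though Proposition \ref{010-1} is about $Z_\rho[0,1]$-socles and is not the relevant tool here). But the core computation --- the effect of the highest $\rho|\cdot|^x$-derivative on the parameters $(\psi,l,\eta)$ --- is asserted rather than carried out, so the proposal does not constitute a proof. If you want a self-contained argument you would need to reproduce the induction of \cite[Theorem 4.1]{At2}, which works by embedding $\pi(\psi,l,\eta)$ into explicit standard-module-like inductions built from M{\oe}glin's construction and computing the relevant $\Jac$ compositions case by case in $l$ and in the sign configuration of $\eta$; otherwise the clean route is the one the paper takes, namely to quote those results.
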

\begin{proof}
When $x \geq 1$ (\resp $x=1/2$), 
the formula for the highest $\rho|\cdot|^x$-derivatives was obtained in \cite[Theorem 4.1]{At2} 
(\resp in \cite[Theorem 3.3]{J-dual}). 
It implies the formula for socles. 
\end{proof}

\subsection{Zelevinsky--Aubert duals of certain tempered representations}\label{sec.temp}
The initial step of our algorithm to compute the Zelevinsky--Aubert duals (Algorithm \ref{alg} (3))
is to compute $\hat\pi$ for tempered $\pi$ 
such that $\pi$ is $\rho'$-reduced for every non-self-dual $\rho'\in \Cusp^\GL$. 
If $\pi = \pi(\phi, \eta)$ for $\phi \in \Phi_\gp(G)$, then $\pi$ satisfies this condition if and only if: 
\begin{itemize}
\item[$(\ast)$]
if $\rho \boxtimes S_{d} \subset \phi$ with $d \geq 2$, 
then $m_\phi(\rho \boxtimes S_{d}) =1$, $\rho \boxtimes S_{d-2} \subset \phi$ 
and $\eta(\rho \boxtimes S_{d}) \not= \eta(\rho \boxtimes S_{d-2})$. 
\end{itemize}
See \cite[Theorem 4.2]{At1}. 
Here, we formally understand that $\rho \boxtimes S_0 \subset \phi$ and $\eta(\rho \boxtimes S_0) = +1$
if $\rho$ is self-dual of the opposite type to $\phi$. 

\begin{prop}\label{temp}
Let $\pi = \pi(\phi, \eta)$ with $\phi \in \Phi_\gp(G)$. 
Assume that $\pi$ satisfies the above condition $(\ast)$. 
Write
\[
\{\rho \;|\; m_\phi(\rho) >0,\; m_\phi(\rho) \equiv 0 \bmod 2\} = \{\rho_1, \dots, \rho_r\}
\]
and set
\[
y_i \coloneqq \max\left\{\half{d_i-1} \;\middle|\; \rho_i \boxtimes S_{d_i} \subset \phi \right\}. 
\]
Suppose that $y_1 \geq \dots \geq y_t > 0 = y_{t+1} = \dots = y_r$.
Then 
\[
\hat\pi = L(\Delta_{\rho_1}[0,-y_1], \dots, \Delta_{\rho_t}[0,-y_t]; \pi(\phi', \eta')), 
\] 
where
\[
\phi' = \phi - \bigoplus_{i=1}^t \rho_i \boxtimes (S_1+S_{2y_i+1}) 
\]
and 
\[
\eta'(\rho \boxtimes S_{d}) = 
\left\{
\begin{aligned}
&-\eta(\rho \boxtimes S_d) \iif \rho \in \{\rho_1, \dots, \rho_r\}, \\
&\eta(\rho \boxtimes S_d) \other. 
\end{aligned}
\right. 
\]
\end{prop}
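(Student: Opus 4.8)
The plan is to induct on $n$ (the rank of $G_n$) using Algorithm \ref{alg}, but the real content is to identify which derivative to apply so that the inductive hypothesis together with the socle formulas of Theorem \ref{der-special} produce the stated Langlands data. First I would observe that condition $(\ast)$ guarantees $\pi$ is $\rho'$-reduced for every non-self-dual $\rho'$ and also $\rho|\cdot|^z$-reduced for every $z \neq 0$ on every line, so none of the $\rho$-derivatives in Algorithm \ref{alg}(1) nor the $[0,-1]$-derivatives of Algorithm \ref{alg}(2) are available; this is consistent with $\pi$ being tempered, so we are genuinely in case (3) and must argue directly rather than by stripping off a derivative of $\pi$ itself. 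Instead, the strategy is to go the other way: compute $\hat\pi$ by showing the claimed representation $L(\Delta_{\rho_1}[0,-y_1],\dots,\Delta_{\rho_t}[0,-y_t];\pi(\phi',\eta'))$ has, under $D_G$, the right image, equivalently that applying the Zelevinsky--Aubert dual to it returns $\pi$. Since the duality is an involution, it suffices to prove $\widehat{L(\Delta_{\rho_1}[0,-y_1],\dots;\pi(\phi',\eta'))} = \pi(\phi,\eta)$, and $L(\Delta_{\rho_1}[0,-y_1],\dots;\pi(\phi',\eta'))$ is \emph{not} tempered, so it falls into Algorithm \ref{alg}(2) where we \emph{can} induct.

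Concretely, write $\varpi \coloneqq L(\Delta_{\rho_1}[0,-y_1],\dots,\Delta_{\rho_t}[0,-y_t];\pi(\phi',\eta'))$. The key step is to compute the highest $\Delta_{\rho_1}[0,-1]$-derivative of $\varpi$ (taking $\rho_1$ with $y_1$ maximal), for which I would invoke Proposition \ref{der-0-1}: one checks $\varpi$ is $\rho_1|\cdot|^z$-reduced for all $z \neq 0$ — this follows from the shape of $\phi'$ and the fact that $y_1$ is the maximal such value — and that some segment $\Delta_{\rho_1}[0,-y_1]$ appears, so Proposition \ref{der-0-1} gives $D_{[0,-1]}^{(k_1)}(\varpi)$ explicitly, with $k_1$ the number of indices $i \le t$ with $\rho_i \cong \rho_1$ and $x_i = 0$. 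By the compatibility of derivatives with duality (Proposition in \S\ref{aubert}), $D_{[0,-1]}^{(k_1)}(\varpi){\widehat{\;}} = D_{Z_{\rho_1}[0,1]}^{(k_1)}(\hat\varpi)$, and by induction on $n$ — since $D_{[0,-1]}^{(k_1)}(\varpi)$ has strictly smaller rank and is again of the form covered by this proposition (with the $\rho_1$-segments shortened, or removed when $y_1 = 1$) — we know its dual; applying $S_{Z_{\rho_1}[0,1]}^{(k_1)}$ via Algorithm \ref{alg}(2) then recovers $\hat\varpi$. The heart of the matter is to check that this socle, computed through Theorem \ref{der-special}'s socle formula with $x=1$ applied $k_1$ times, turns the pieces $\rho_1 \boxtimes S_1$ and $\rho_1 \boxtimes S_{2y_1+1}$ back into $\rho_1 \boxtimes S_{2y_1}\boxtimes S_2 \to$ (iteratively) $\rho_1$-blocks of $\phi$, and simultaneously flips $\eta'$ back to $\eta$ on the $\rho_1$-line — i.e. that the bookkeeping of $A$-parameters and component-group characters in Theorem \ref{der-special} is exactly inverse to the passage $\phi \rightsquigarrow \phi'$, $\eta \rightsquigarrow \eta'$ in the statement.

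The base case of the induction is when $t = 0$, i.e. $\phi$ has no $\rho$ with even multiplicity contributing a segment, where $\hat\pi$ should be $\pi$ itself up to the $\eta \mapsto \eta'$ flip on lines with even multiplicity; but when $t=0$ there are no such lines, $\phi' = \phi$, $\eta' = \eta$, and the claim is $\hat\pi = \pi$, which holds because under $(\ast)$ with $t=0$ the parameter $\phi$ is ``almost supercuspidal'' in the sense that $D_G$ acts trivially — this needs a short separate argument, essentially that such a $\pi$ has no proper Jacquet modules contributing non-cancelling terms, or equivalently is already its own dual by a direct computation with Tadi\'c's formula. The main obstacle I anticipate is precisely the combinatorial verification in the inductive step that the $A$-parameter and character transformations match up: Theorem \ref{der-special} describes one derivative step in terms of the operation $\psi \mapsto \psi - (\rho\boxtimes S_{2x+1})^m + (\rho\boxtimes S_{2x-1})^m$ together with $l \mapsto l+m$ and a prescribed $\eta$, and one must check that iterating the \emph{dual} (socle) operation $k_1$ times, starting from the dual of the shortened parameter, rebuilds $\phi$ on the $\rho_1$-line and produces the sign $-\eta$ there and $+\eta$ elsewhere. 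Getting the multiplicities $m, m'$ and the parity conditions in Theorem \ref{der-special} to align with the definition of $y_i$ as the \emph{maximum} of $(d_i-1)/2$ — and handling the $x = 1/2$ edge case uniformly — is where the care is needed; everything else is a formal consequence of the results already established in the excerpt.
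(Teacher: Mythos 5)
Your proposal diverges from the paper's proof, which does not dualize the candidate answer: instead it strips $\pi$ down to a supercuspidal $\pi(\phi'',\eta'')$ by a chain of ordinary $\rho$- and $\rho|\cdot|^{y}$-derivatives (using \cite[Theorem 4.2]{At1}), invokes \cite[Theorem 2.13]{At2} to read off the Langlands data of $\hat\pi$ up to an undetermined part of the character, and then pins down the remaining signs via Aubert's $\hat\pi_{\pm}=\pi_{\mp}$ for the two constituents of $\rho\rtimes\sigma$ (\cite[Corollaire 1.10]{Au}). A different route is in principle possible, but as written yours has two genuine gaps.

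First, the base case is wrong. Having $t=0$ does \emph{not} mean there are no lines with even multiplicity: $t$ only counts the $\rho_i$ with $y_i>0$, and there may be $\rho_i$ with $m_\phi(\rho_i)>0$ even and $y_i=0$ (i.e.\ $\rho_i\boxtimes S_d\subset\phi$ only for $d=1$). For those the statement asserts $\phi'=\phi$ but $\eta'(\rho_i\boxtimes S_d)=-\eta(\rho_i\boxtimes S_d)$, so $\hat\pi\neq\pi$ in general — e.g.\ if $\rho\rtimes\sigma=\pi_+\oplus\pi_-$ is reducible then $\hat\pi_{\pm}=\pi_{\mp}$. This sign flip on the $y_i=0$ lines is precisely the delicate point the paper's proof isolates at the end, and your argument never produces it; indeed no $[0,-1]$-derivative or $Z_\rho[0,1]$-socle touches those lines. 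Second, in the inductive step $D_{\Delta_{\rho_1}[0,-1]}^{(k_1)}(\varpi)$ is, by Proposition \ref{der-0-1}, of the form $L(\Delta_{\rho_1}[-2,-y_1],\dots)$ — non-tempered, with segments no longer starting at $0$ — so it is not "of the form covered by this proposition," and knowing its dual "by induction" means running the whole algorithm at lower rank and then verifying that $S_{Z_{\rho_1}[0,1]}^{(k_1)}$ reassembles $\phi$ and $\eta$ exactly; that verification (which must go through Theorem \ref{der-01} and Corollaries \ref{soc-01}, \ref{soc-01-special}, not through Theorem \ref{der-special}, which governs $\rho|\cdot|^x$-socles rather than $Z_\rho[0,1]$-socles) is the entire content of the claim and is left undone. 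Until both points are supplied, the proposal does not constitute a proof.
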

\begin{proof}
Set
\[
\{\rho \;|\; m_\phi(\rho) >0,\; m_\phi(\rho) \equiv 1 \bmod 2\} = \{\rho'_1, \dots, \rho'_{r'}\}. 
\]
Write $m_\phi(\rho_i) = 2k_i > 0$ and $m_{\phi}(\rho'_j) = 2k'_j+1$. 
Then by \cite[Theorem 4.2]{At1}, we have
\[
\left(\circ_{j=1}^{r'}D_{\rho_j'}^{(k'_j)}\right) \circ 
\left(\circ_{i=1}^{r}D_{\rho_i|\cdot|^{y_i}}^{(1)} \circ \dots \circ D_{\rho_i|\cdot|^1}^{(1)} \circ D_{\rho_i}^{(k_i)}\right)(\pi) \not= 0.
\]
It is $\pi(\phi'', \eta'')$ up to multiplicity, where 
\[
\phi'' = \phi - \left(\bigoplus_{j=1}^{r'}{\rho'_j}^{2k_j'}\right) 
- \left(\bigoplus_{i=1}^r\rho_i \boxtimes (S_1^{2k_i-1}+S_{2y_i+1})\right)
\]
and 
\[
\eta''(\rho \boxtimes S_{d}) = 
\left\{
\begin{aligned}
&-\eta(\rho \boxtimes S_d) \iif \rho \in \{\rho_1, \dots, \rho_t\}, \\
&\eta(\rho \boxtimes S_d) \iif \rho \not\in \{\rho_1, \dots, \rho_r\}.
\end{aligned}
\right. 
\]
Note that $\rho_i \not\subset \phi''$ for $i > t$. 
In particular, $\pi(\phi'', \eta'')$ is supercuspidal. 
By \cite[Theorem 2.13]{At2}, 
with $\phi'$ as in the statement, we have 
\[
\hat\pi = L(\Delta_{\rho_1}[0,-y_1], \dots, \Delta_{\rho_t}[0,-y_t]; \pi(\phi', \eta'))
\]
for some $\eta' \in \AA_{\phi'}$ 
such that $\eta'' = \eta'|\AA_{\phi''}$ via the canonical inclusion $\AA_{\phi''} \hookrightarrow \AA_{\phi'}$.
Since $\Sc_{\phi'}$ is generated by $\Sc_{\phi''}$ and the image of $\{\alpha_{\rho_i} \;|\; i > t\}$, 
the remaining task is to determine $\eta'(\rho_{i_0})$ for $i_0 > t$. 
To do this, by replacing $\pi$ with 
\[
\left(\circ_{j=1}^{r'}D_{\rho_j'}^{(k'_j)}\right) \circ \left(\circ_{\substack{1 \leq i \leq r \\ i \not= i_0}}
D_{\rho_i|\cdot|^{y_i}}^{(1)} \circ \dots \circ D_{\rho_i|\cdot|^1}^{(1)} \circ D_{\rho_i}^{(k_i)}\right)(\pi), 
\]
we may assume that $\pi \subset \rho^k \rtimes \sigma$
with $\sigma$ supercuspidal such that $\rho \rtimes \sigma$ is semisimple of length two. 
If we write $\rho \rtimes \sigma = \pi_+ \oplus \pi_-$, 
then $\rho^{k-1} \rtimes \pi_{\pm}$ is irreducible and 
its Zelevinsky--Aubert dual is given by $\rho^{k-1} \rtimes \hat\pi_{\pm}$. 
By \cite[Corollaire 1.10]{Au}, we know that $\hat\pi_{\pm} = \pi_{\mp}$. 
Hence we see that $\eta'(\rho_{i_0}) = -\eta(\rho_{i_0})$, as desired.
\end{proof}

If $\pi$ is tempered, of $\rho$-bad parity, and $\rho|\cdot|^z$-reduced  for all $z \not= 0$, 
then $\pi$ must be of the form $\pi = \rho^m \rtimes \sigma$ for some $m \geq 0$ and $\sigma$ supercuspidal. 
In particular, we have $\hat\pi = \pi$.
Similarly, if $\pi$ is tempered, ugly and $\rho'$-reduced for all non-self-dual $\rho'\in \Cusp^\GL$, 
then $\pi$ must be supercuspidal so that $\hat\pi = \pi$.

\section{Best matching functions, the ugly and the negative case}\label{sub:best}
To give formulas for derivatives and socles, 
following \cite[\S 5.3]{LM}, we introduce the notion of the best matching functions. We then use these functions to explicate the ugly and the negative case.
\par

\subsection{Best matching functions}
Let $A$ and $B$ be totally ordered finite sets 
with respect to $\geq_A$ and $\geq_B$, respectively.
For $a \in A$, write $A_{>a} \coloneqq \{a' \in A \;|\; a' >_A a\}$. 
We consider a relation $\rel$ between $B$ and $A$ such that
\begin{align*}
&\forall a_1 \geq_A a_2 \in A,\;
\forall b_1 \geq_B b_2 \in B, \\
&b_1 \rel a_1 \;\&\; b_2 \rel a_1 \;\&\; b_2 \rel a_2 
\implies b_1 \rel a_2.
\end{align*}
We call such a relation \emph{traversable}.
In this case, 
we define a subset $A^0$ of $A$ and an injective map $f \colon A^0 \rightarrow B$ recursively by
\begin{align*}
\text{$a \in A^0 \iff \exists b \in B\setminus f(A^0 \cap A_{>a})$ such that $b \rel a$}
\\
\text{in which case $f(a) \coloneqq \min\{b \in B \setminus f(A^0 \cap A_{>a}) \;|\; b \rel a\}$}.
\end{align*}
Set $B^0 \coloneqq f(A^0)$ to be the image of $f$.
We call the bijection $f \colon A^0 \rightarrow B^0$ 
the \textit{best matching function} between $A$ and $B$.
By \cite[Lemma 5.7]{LM}, the domain $A^0$ is equal to $A$
if and only if Hall's criterion is satisfied, i.e., 
for any subset $A' \subset A$, we have
\[
|\{b \in B \;|\; \text{$b \rel a$ for some $a \in A'$}\}| \geq |A'|.
\]
When one of $A$ or $B$ is the empty set, note that we have $A^0 = B^0 = \emptyset$. 
We set $A^c=A\setminus A^0$ and $B^c = B \setminus B^0$. 
\par

\subsection{Derivatives and socles in the ugly and in the negative case}\label{negative}
Fix $\rho \in \Cusp^\GL$ and $x \in \R$. 
In this subsection, we give explicit formulas using the best matching functions 
for the highest $\rho|\cdot|^x$-derivatives $D_{\rho|\cdot|^x}^{(k)}(\pi)$ and
the socles $S_{\rho|\cdot|^x}^{(1)}(\pi) = \soc(\rho|\cdot|^x \rtimes \pi)$ 
in the case where $\rho|\cdot|^x$ is ugly, or $\rho$ is self-dual and $x$ is negative. 
\par

Let $\pi \in \Irr(G_n)$. 
By Remark \ref{rem:ugly} and by the Langlands classification, 
we can write
$\pi = \soc(L(\Delta_{\rho_1}[x_1,y_1], \dots, \Delta_{\rho_r}[x_r,y_r]) \rtimes \pi_\temp)$
where:
\begin{itemize}
\item
if $\rho|\cdot|^x$ is ugly, 
then $\rho_i = \rho$ for all $i = 1, \dots, r$, $x_1+y_1 \leq \dots \leq x_r+y_r$ and $\pi_\temp = \sigma$ is supercuspidal; 
\item
if $\rho$ is self-dual and $x$ is negative, 
then $x_1+y_1 \leq \dots \leq x_r+y_r < 0$, and $\pi_\temp$ is tempered. 
\end{itemize}
To unify notation, let us call $(\Delta_{\rho_1}[x_1,y_1], \dots, \Delta_{\rho_r}[x_r,y_r]; \pi_\temp)$
the \emph{inducing data}.
\par

Define an ordered set $A_{\rho|\cdot|^x}$ by
\[
A_{\rho|\cdot|^x} \coloneqq \{ i \in \{1, \dots, r\} \;|\; \rho_i \cong \rho, \; x_i = x\}
\]
with 
\[
a \geq a' \iff y_a \geq y_{a'}. 
\]
We define a relation $\rel$ between $A_{\rho|\cdot|^x}$ and $A_{\rho|\cdot|^{x-1}}$ by 
\[
A_{\rho|\cdot|^x} \ni a' \rel a \in A_{\rho|\cdot|^{x-1}} \iff y_{a'} > y_a. 
\]
Namely, $a' \rel a$ if and only if $L(\Delta_\rho[x_a,y_a], \Delta_\rho[x_{a'},y_{a'}])$ is a ladder representation.
Note that this relation is traversable. 
Let $f \colon A_{\rho|\cdot|^{x-1}}^0 \rightarrow A_{\rho|\cdot|^{x}}^0$ be the best matching function. 
In the next proposition, 
we obtain explicit formulas for the highest $\rho|\cdot|^x$-derivative $D_{\rho|\cdot|^x}^{(k)}(\pi)$
and for the socle $S_{\rho|\cdot|^x}^{(1)}(\pi)$. 

\begin{prop}\label{soc-negative}
Suppose $\rho|\cdot|^x$ is ugly or $\rho$ is self-dual and $x$ is negative. 
With notation as above, 
the highest $\rho|\cdot|^x$-derivative $D_{\rho|\cdot|^x}^{(k)}(\pi)$ is the unique irreducible subrepresentation of 
$L(\Delta_{\rho_1}[x'_1,y_1], \dots, \Delta_{\rho_r}[x'_r,y_r]) \rtimes \pi_\temp$, 
where
\[
x_i' = \left\{
\begin{aligned}
&x-1	\iif i \in A_{\rho|\cdot|^{x}}^c,\\
&x_i	\other. 
\end{aligned}
\right.  
\]
In particular, $k = |A_{\rho|\cdot|^{x}}^c|$.
Moreover: 
\begin{enumerate}
\item[(a)]
If $A_{\rho|\cdot|^{x-1}}^c\neq \emptyset$, then
the inducing data of $S_{\rho|\cdot|^x}^{(1)}(\pi)$ can be obtained from those of $\pi$ 
by replacing $x_a = x-1$ with $x$, 
where $a$ is the minimum element of $A_{\rho|\cdot|^{x-1}}^c$. 

\item[(b)]
If $A_{\rho|\cdot|^{x-1}}^c = \emptyset$, 
then the inducing data of $S_{\rho|\cdot|^x}^{(1)}(\pi)$ can be obtained from those of $\pi$ 
by inserting $\rho|\cdot|^x = \Delta_\rho[x,x]$.
\end{enumerate}
\end{prop}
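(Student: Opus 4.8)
The plan is to reduce both assertions to the behaviour of $\rho|\cdot|^x$-derivatives and socles on general linear groups, where the answer is governed by the best matching function as in \cite[\S 5.3]{LM}, and then to transport the combinatorics through the functor $(-)\rtimes\pi_\temp$. The basic observation is that in both cases under consideration $\rho|\cdot|^x$ is not self-dual: if $\rho|\cdot|^x$ is ugly this holds by definition, and if $\rho$ is self-dual with $x<0$ then $(\rho|\cdot|^x)^\vee=\rho|\cdot|^{-x}\ne\rho|\cdot|^x$. Hence Proposition \ref{SI} applies: $(\rho|\cdot|^x)^r\rtimes\pi'$ is SI for every $\pi'\in\Irr^G$ and every $r\ge 0$, the socles $S_{\rho|\cdot|^x}^{(r)}$ are well defined, and this SI-property will supply all the uniqueness statements below.

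\emph{The derivative formula.} In the ugly case, Proposition \ref{prop:ugly} and Remark \ref{rem:ugly} allow us to write $\pi=\tau\rtimes\sigma$ (irreducible induction) with $\sigma\in\Cusp^G$ and $\tau=L(\Delta_{\rho_1}[x_1,y_1],\dots,\Delta_{\rho_r}[x_r,y_r])$ a representation of a general linear group with $\scusp(\tau)\subset\Z_\rho$. Since $\Z_\rho\cap\Z_{\rho^\vee}=\emptyset$, an inspection of Tadi\'c's formula (Proposition \ref{eq:Tadic}) shows that the only terms of $M^\ast(\tau)$ contributing to the $(\rho|\cdot|^x)^k\boxtimes(-)$ component of $\mu^\ast(\tau\rtimes\sigma)$ are those coming from $m^\ast(\tau)$; hence the highest $\rho|\cdot|^x$-derivative of $\pi$ equals $L_{\rho|\cdot|^x}^{(k)}(\tau)\rtimes\sigma$. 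In the negative case we argue with $\pi=\tau\rtimes\pi_\temp$ in the same way: here every segment satisfies $x_i+y_i<0$ and, by Casselman's criterion (see e.g.\ \cite[Lemma 2.4]{K}), no $\rho|\cdot|^z$ with $z<0$ can be peeled off the left of $\pi_\temp$; together these facts force any copy of $\rho|\cdot|^x$ extracted from $\pi$ to originate from a segment $\Delta_{\rho_i}[x_i,y_i]$ with $x_i=x$, so that again $D_{\rho|\cdot|^x}^{(k)}(\pi)=L_{\rho|\cdot|^x}^{(k)}(\tau)\rtimes\pi_\temp$. It then remains to prove the general-linear statement: $L_{\rho|\cdot|^x}^{(k)}(\tau)$ is the socle of $\Delta_{\rho_1}[x'_1,y_1]\times\dots\times\Delta_{\rho_r}[x'_r,y_r]$ with $x'_i=x-1$ exactly for $i\in A_{\rho|\cdot|^x}^c$, and $k=|A_{\rho|\cdot|^x}^c|$. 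This is the best-matching-function analysis of \cite[\S 5.3]{LM} together with the Jacquet-module formula for ladder representations \cite[Theorem 2.1]{KL}: the pairs $(a',a)$ matched by $f$ combine $\Delta_\rho[x,y_{a'}]$ and $\Delta_\rho[x-1,y_a]$ into ladder subquotients from which no $\rho|\cdot|^x$ may be stripped, while the unmatched segments in $A_{\rho|\cdot|^x}^c$ are exactly those permitting $\Delta_\rho[x,y_i]\hookrightarrow\rho|\cdot|^x\times\Delta_\rho[x-1,y_i]$; that the residue is $\rho|\cdot|^x$-reduced gives $D_{\rho|\cdot|^x}^{(k+1)}(\pi)=0$.

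\emph{The socle formula.} Write $\pi_0\coloneqq D_{\rho|\cdot|^x}^{(k)}(\pi)$ for the irreducible highest derivative just computed; it is $\rho|\cdot|^x$-reduced and $\pi\hookrightarrow(\rho|\cdot|^x)^k\rtimes\pi_0$, whence $\rho|\cdot|^x\rtimes\pi\hookrightarrow(\rho|\cdot|^x)^{k+1}\rtimes\pi_0$. Since the right-hand side is SI, taking socles yields $S_{\rho|\cdot|^x}^{(1)}(\pi)=\soc\bigl((\rho|\cdot|^x)^{k+1}\rtimes\pi_0\bigr)$. Thus one has to re-run the best-matching analysis with one extra copy of $\rho|\cdot|^x$: adjoining it to the inducing data enlarges $A_{\rho|\cdot|^x}$ by one element, and the new best matching function between $A_{\rho|\cdot|^x}$ and $A_{\rho|\cdot|^{x-1}}$ differs from the old one in exactly one spot. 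If $A_{\rho|\cdot|^{x-1}}^c\ne\emptyset$, the extra $\rho|\cdot|^x$ is absorbed on top of $\Delta_\rho[x-1,y_a]$ with $a=\min A_{\rho|\cdot|^{x-1}}^c$, replacing it by $\Delta_\rho[x,y_a]$; if $A_{\rho|\cdot|^{x-1}}^c=\emptyset$ there is no segment to lengthen and the extra $\rho|\cdot|^x$ appears as the length-one segment $\Delta_\rho[x,x]$. In each case one concludes by verifying that the candidate induced representation is SI (Proposition \ref{SI}) and that its socle has highest $\rho|\cdot|^x$-derivative equal to $\pi_0$, which by the derivative formula identifies it with $S_{\rho|\cdot|^x}^{(1)}(\pi)$.

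\emph{Main obstacle.} The delicate points are, first, the bookkeeping in the reduction to general linear groups — reconciling the socle convention used here for $L(\cdots)$ with the Langlands-quotient convention of \cite{LM}, tracking left versus right derivatives, and re-ordering segments so that the best matching function of \cite[\S 5.3]{LM} is literally the one defined before the proposition — and, second, in the negative case, the rigorous verification that $\pi_\temp$ is inert: one must combine Tadi\'c's formula with Casselman's bound on the exponents of the Jacquet modules of $\pi_\temp$ to rule out contributions to the $(\rho|\cdot|^x)^k$-isotypic part of $\mu^\ast(\tau\rtimes\pi_\temp)$ coming from the contragredient-twisted ($M^\ast$) terms, not only from the naive $m^\ast$ terms. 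Once these conventions and this inertness are settled, the remainder is the best-matching-function combinatorics.
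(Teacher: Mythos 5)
Your proposal is correct and follows essentially the same route as the paper: reduce the computation of $D_{\rho|\cdot|^x}^{(k)}(\pi)$ and $S_{\rho|\cdot|^x}^{(1)}(\pi)$ to the corresponding left derivative and socle on the general linear factor and invoke the best-matching-function result \cite[Theorem 5.11]{LM}. The paper's proof consists only of the two reduction identities $D_{\rho|\cdot|^x}^{(k)}(\pi)=\soc(L_{\rho|\cdot|^x}^{(k)}(\tau)\rtimes\pi_\temp)$ and $S_{\rho|\cdot|^x}^{(1)}(\pi)=\soc(\soc(\rho|\cdot|^x\times\tau)\rtimes\pi_\temp)$ plus that citation; you supply the justification of the reduction (exclusion of the contragredient $M^\ast$-terms and of contributions from $\pi_\temp$) that the paper leaves implicit.
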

\begin{proof}
Since $\rho|\cdot|^x$ is ugly or $\rho$ is self-dual and $x$ is negative, we have 
\begin{align*}
D_{\rho|\cdot|^x}^{(k)}(\pi) 
&= \soc\left(
L_{\rho|\cdot|^x}^{(k)}(L(\Delta_{\rho_1}[x_1,y_1], \dots, \Delta_{\rho_r}[x_r,y_r])) \rtimes \pi_\temp
\right), \\
S_{\rho|\cdot|^x}^{(1)}(\pi) 
&= \soc\left(
\soc(\rho|\cdot|^x \times L(\Delta_{\rho_1}[x_1,y_1], \dots, \Delta_{\rho_r}[x_r,y_r])) \rtimes \pi_\temp
\right). 
\end{align*}
Therefore, the proposition is essentially a problem for general linear groups,
which was done in \cite[Theorem 5.11]{LM}.
\end{proof}

\section{Explicit formulas for derivatives and socles: The positive case}\label{sec:pos}
In this section, we give explicit formulas for the highest derivatives
and for the socles of several parabolically induced representations in the positive case. 
The main results are Theorem \ref{soc-positive} 
where we describe derivatives and socles in the good parity case, 
and Theorem \ref{bad-soc-positive} where the bad parity case is treated. 
In Corollary \ref{irr_cusp} we deduce a result on irreducibility of certain parabolic inductions.

We fix in all this section $\rho \in \Cusp^\GL$ self-dual, and $x \in (1/2)\Z$ with $x > 0$.
\par

\subsection{Good parity case}\label{sub:posgood}
In this subsection, 
we assume that $\pi\in\Irr(G_n)$ is of good parity, 
and that $\rho \boxtimes S_{2x+1}$ is self-dual of the same type as elements in $\Phi_\gp(G)$. 
Write $\pi = L(\Delta_{\rho_1}[x_1,y_1], \dots, \Delta_{\rho_{r'}}[x_{r'},y_{r'}]; \pi(\phi, \eta))$
as a Langlands subrepresentation so that $x_1+y_1 \leq \dots \leq x_{r'}+y_{r'} <0$ and $\phi \in \Phi_\gp(G)$. 
Set 
\[
t = |\{i \in \{1, \dots, r'\}\;|\; \Delta_{\rho_i}[x_i,y_i] \cong \Delta_\rho[x-1,-x]\}|
\]
and $r = r'-t$.
Then we can rewrite 
\[
\pi = \soc\left(
L(\Delta_{\rho_1}[x_1,y_1], \dots, \Delta_{\rho_r}[x_r,y_r]) \rtimes \pi_A 
\right),
\]
where we set $\pi_A \coloneqq L(\Delta_\rho[x-1,-x]^t; \pi(\phi, \eta))$. 
\par

If $m_\phi(\rho \boxtimes S_{2x+1}) \not= 0$, $m_\phi(\rho \boxtimes S_{2x-1}) \not= 0$
and $\eta(\rho \boxtimes S_{2x+1})\eta(\rho \boxtimes S_{2x-1}) = (-1)^{t+1}$, 
set 
\[
\psi \coloneqq \phi - \rho \boxtimes (S_{2x+1}+S_{2x-1}) + (\rho \boxtimes S_{2x} \boxtimes S_2)^{t+1}
\]
and $l \coloneqq 0$. 
Otherwise, set $\psi \coloneqq \phi + (\rho \boxtimes S_{2x} \boxtimes S_2)^t$ and $l \coloneqq 1$. 
Then $\pi_A = \pi(\psi, l, \eta) \in \Pi_\psi$ by Proposition \ref{moe}.
Set $m \coloneqq m_\psi(\rho \boxtimes S_{2x+1})$ and $m' \coloneqq m_\psi(\rho \boxtimes S_{2x-1})$.
Then the highest $\rho|\cdot|^x$-derivative of 
$\soc((\rho|\cdot|^{-x})^s \rtimes \pi_A)$ is described in Theorem \ref{der-special}.
\par

Note that $x_i \geq y_i$ for all $i = 1, \dots, r$.
Define ordered sets 
\begin{align*}
A_{\rho|\cdot|^{x}} &\coloneqq \{i \in \{1,\dots,r\} \;|\; \rho_i \cong \rho,\; x_i = x\}, \\ 
B_{\rho|\cdot|^x} &\coloneqq \{i \in \{1,\dots,r\} \;|\; \rho_i \cong \rho,\; y_i = -x\}
\end{align*}
with 
\begin{align*}
a \geq a' \iff y_a \geq y_{a'}
&\quad\text{for $a,a' \in A_{\rho|\cdot|^{x}}$}, \\
b \geq b' \iff x_b \leq x_{b'}
&\quad\text{for $b,b' \in B_{\rho|\cdot|^x}$}. 
\end{align*}
Notice that any two of $A_{\rho|\cdot|^{x-1}}, A_{\rho|\cdot|^{x}}, B_{\rho|\cdot|^{x-1}}, B_{\rho|\cdot|^{x}}$
have no intersection. 
Define relations $\rel$ between $A_{\rho|\cdot|^{x}}$ and $A_{\rho|\cdot|^{x-1}}$, 
and between $B_{\rho|\cdot|^{x}}$ and $B_{\rho|\cdot|^{x-1}}$ by
\begin{align*}
&A_{\rho|\cdot|^{x}} \ni a' \rel a \in A_{\rho|\cdot|^{x-1}} \iff y_{a'} > y_a, \\
&B_{\rho|\cdot|^{x}} \ni b' \rel b \in B_{\rho|\cdot|^{x-1}} \iff x_{b'} < x_b, 
\end{align*}
respectively.
Note that these relations are traversable.
Let $f \colon A_{\rho|\cdot|^{x-1}}^0 \rightarrow A_{\rho|\cdot|^{x}}^0$ 
and $g \colon B_{\rho|\cdot|^{x-1}}^0 \rightarrow B_{\rho|\cdot|^{x}}^0$ be the best matching functions. 
Write $B_{\rho|\cdot|^{x}}^c= \{i_1, \dots, i_{s}\}$ with $i_1 < \dots < i_{s}$.
We notice that $s > 0$ only if $x > 1$.

\begin{thm}\label{soc-positive}
Notation is as above. 
Suppose that $x > 0$, $x \in (1/2)\Z$ 
and that $\rho \boxtimes S_{2x+1}$ is self-dual of the same type as $\phi$. 
Then the highest $\rho|\cdot|^x$-derivative $D_{\rho|\cdot|^x}^{(k)}(\pi)$ is the unique irreducible subrepresentation of 
$L(\Delta_{\rho_1}[x'_1,y'_1], \dots, \Delta_{\rho_r}[x'_r,y'_r]) \rtimes \pi_A'$,
where
\begin{align*}
x_i' &= \left\{
\begin{aligned}
&x-1	\iif i \in A_{\rho|\cdot|^{x}}^c,\\
&x_i	\other, 
\end{aligned}
\right.  \\
y_i' &= \left\{
\begin{aligned}
&-(x-1)	
\iif \text{$i=i_j$, $j > m'+\max\{|A_{\rho|\cdot|^{x-1}}^c|-m,0\}$}, \\
&y_i	\other, 
\end{aligned}
\right.  
\end{align*}
and $\pi_A' = \pi(\psi', l', \eta)$ with 
\[
\psi' = \psi 
- (\rho \boxtimes S_{2x+1})^{\max\{m-|A_{\rho|\cdot|^{x-1}}^c|,0\}} 
+ (\rho \boxtimes S_{2x-1})^{\max\{m-|A_{\rho|\cdot|^{x-1}}^c|,0\}}
\]
and 
\[
l' = l + \max\{m-|A_{\rho|\cdot|^{x-1}}^c|,0\}.
\]
In particular, 
\[
k = |A_{\rho|\cdot|^{x}}^c| 
+ \max\left\{
m+\max\{|B_{\rho|\cdot|^{x}}^c|-m',0\}
-|A_{\rho|\cdot|^{x-1}}^c|,0
\right\}.
\]
Moreover: 
\begin{enumerate}
\item[(a)]
If 
$m+\max\{|B_{\rho|\cdot|^{x}}^c|-m',0\} 
< |A_{\rho|\cdot|^{x-1}}^c|$, 
then the Langlands data of $S_{\rho|\cdot|^x}^{(1)}(\pi)$ can be obtained from those of $\pi$ 
by replacing $x_a = x-1$ with $x$, where $a$ is the minimum element of 
$A_{\rho|\cdot|^{x-1}}^c$. 

\item[(b)]
If $|B_{\rho|\cdot|^{x}}^c| < m'$ 
and $m \geq |A_{\rho|\cdot|^{x-1}}^c|$, 
the Langlands data of $S_{\rho|\cdot|^x}^{(1)}(\pi)$ can be obtained from those of $\pi$ 
by replacing $\pi_A = \pi(\psi, l, \eta)$ with 
\[
S_{\rho|\cdot|^x}^{(1)}(\pi_A) = \pi(\psi - (\rho \boxtimes S_{2x-1}) + (\rho \boxtimes S_{2x+1}), l-1, \eta). 
\]

\item[(c)]
If $|B_{\rho|\cdot|^{x}}^c| \geq m'$, 
$m+|B_{\rho|\cdot|^{x}}^c|-m' \geq |A_{\rho|\cdot|^{x-1}}^c|$ 
and $B_{\rho|\cdot|^{x-1}}^c \not= \emptyset$, 
the Langlands data of $S_{\rho|\cdot|^x}^{(1)}(\pi)$ can be obtained from those of $\pi$ 
by replacing $y_b = -(x-1)$ with $-x$, 
where $b$ is the minimum element of $B_{\rho|\cdot|^{x-1}}^c$. 

\item[(d)]
If $|B_{\rho|\cdot|^{x}}^c| \geq m'$, 
$m+|B_{\rho|\cdot|^{x}}^c|-m' \geq |A_{\rho|\cdot|^{x-1}}^c|$ 
and $B_{\rho|\cdot|^{x-1}}^c = \emptyset$, 
then the Langlands data of $S_{\rho|\cdot|^x}^{(1)}(\pi)$ can be obtained from those of $\pi$ 
by inserting $\rho|\cdot|^{-x} = \Delta_{\rho}[-x,-x]$.
\end{enumerate}
\end{thm}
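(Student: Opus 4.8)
The plan is to deduce Theorem~\ref{soc-positive} by gluing two inputs along Tadi\'c's formula (Proposition~\ref{eq:Tadic}): the combinatorics of the $\GL$-segments governed by the best matching functions, exactly as in \cite[Theorem 5.11]{LM} and Proposition~\ref{soc-negative}, and the formula for the highest $\rho|\cdot|^x$-derivative and socle of $\soc((\rho|\cdot|^{-x})^s\rtimes\pi_A)$ provided by Theorem~\ref{der-special}. Since $x>0$, the representation $\rho|\cdot|^x$ is not self-dual, so by Section~\ref{sec:deriv} the highest $\rho|\cdot|^x$-derivative $D_{\rho|\cdot|^x}^{(k)}(\pi)$ is irreducible, $(\rho|\cdot|^x)^r\rtimes(-)$ is SI for every $r$ (Proposition~\ref{SI}), $\pi$ is the unique irreducible subrepresentation of $(\rho|\cdot|^x)^k\rtimes D_{\rho|\cdot|^x}^{(k)}(\pi)$, and hence $\pi=S_{\rho|\cdot|^x}^{(k)}(D_{\rho|\cdot|^x}^{(k)}(\pi))$. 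Consequently, for the derivative part it suffices to exhibit an irreducible $\tau$ and an embedding $\pi\hookrightarrow(\rho|\cdot|^x)^k\rtimes\tau$ with $\tau$ being $\rho|\cdot|^x$-reduced; then $\tau=D_{\rho|\cdot|^x}^{(k)}(\pi)$ and $k$ is the order of the highest derivative.

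First I would construct the embedding with $\tau=\soc(L(\Delta_{\rho_1}[x_1',y_1'],\dots,\Delta_{\rho_r}[x_r',y_r'])\rtimes\pi_A')$ as in the statement, by rearranging the inducing data of $\pi$. The segments indexed by $A_{\rho|\cdot|^x}^c$ can be moved to the front and each sheds its leading $\rho|\cdot|^x$ --- this is literally the $\GL$-step used in Proposition~\ref{soc-negative} --- contributing $|A_{\rho|\cdot|^x}^c|$ copies. Next, each segment indexed by $B_{\rho|\cdot|^x}^c$ ends at $-x$ and, through the dual term of $M^\ast$, can release a $\rho|\cdot|^x$ at the cost of feeding a $\rho|\cdot|^{-x}$ into $\pi_A$; applying Theorem~\ref{der-special} with $s=|B_{\rho|\cdot|^x}^c|$ rewrites the bottom block as $\soc((\rho|\cdot|^{-x})^{\min\{s,m'\}}\rtimes\pi(\psi',l',\eta))$ and frees a further $\max\{m+\max\{s-m',0\}-|A_{\rho|\cdot|^{x-1}}^c|,0\}$ copies of $\rho|\cdot|^x$, once the $\Delta_\rho[x-1,y_i]$-segments indexed by $A_{\rho|\cdot|^{x-1}}^c$ have absorbed as many of the freed $\rho|\cdot|^x$'s as possible; this reabsorption is what replaces the naive change $\psi\mapsto\psi-(\rho\boxtimes S_{2x+1})^m+(\rho\boxtimes S_{2x-1})^m$ coming from Theorem~\ref{der-special} by the truncated one $\psi\mapsto\psi'$, $l\mapsto l'$, and what forces the shortening $y_i\mapsto-(x-1)$ for precisely those $B_{\rho|\cdot|^x}^c$-indices $i_j$ with $j$ beyond the cutoff $m'+\max\{|A_{\rho|\cdot|^{x-1}}^c|-m,0\}$. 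Collecting the three groups of contributions yields the embedding with $k$ as displayed.

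Second I would verify that $\tau$ is $\rho|\cdot|^x$-reduced by computing $\mu^\ast(\tau)$ via Tadi\'c's formula and checking that the $\rho|\cdot|^x\boxtimes(-)$-component vanishes: the $\GL$-part retains no unmatched segment beginning with $\rho|\cdot|^x$ (this is the characterisation of the domain of the best matching function by Hall's criterion, \cite[Lemma 5.7]{LM}); by the vanishing statement in Theorem~\ref{der-special}, $\soc((\rho|\cdot|^{-x})^{\min\{s,m'\}}\rtimes\pi(\psi',l',\eta))$ admits no further $\rho|\cdot|^x$-derivative; and the shortened $B$-segments now end at $-(x-1)$, so they can no longer supply a $\rho|\cdot|^{-x}$ to a still-productive $\pi_A'$. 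This establishes the derivative formula and the value of $k$. For the socle, using $\pi=S_{\rho|\cdot|^x}^{(k)}(D_{\rho|\cdot|^x}^{(k)}(\pi))$ and the SI-property of $(\rho|\cdot|^x)^{k+1}\rtimes D_{\rho|\cdot|^x}^{(k)}(\pi)$ gives $S_{\rho|\cdot|^x}^{(1)}(\pi)=\soc((\rho|\cdot|^x)^{k+1}\rtimes D_{\rho|\cdot|^x}^{(k)}(\pi))$, so the only question is where the extra $\rho|\cdot|^x$ lands when reconstituting a Langlands datum. Running the embedding of the previous paragraph in reverse, the four alternatives are the four combinatorial regimes: an unsaturated $A_{\rho|\cdot|^{x-1}}^c$ absorbs the new $\rho|\cdot|^x$ into a $\Delta_\rho[x-1,y]$ (case (a)); otherwise it enters $\pi_A$ through the socle formula of Theorem~\ref{der-special}, changing $\rho\boxtimes S_{2x-1}$ to $\rho\boxtimes S_{2x+1}$ and $l$ to $l-1$ (case (b), when the $B$-side is not yet saturated); otherwise it lengthens at the bottom the smallest available $B_{\rho|\cdot|^{x-1}}^c$-segment (case (c)); or, if there is none, it creates a new segment $\Delta_\rho[-x,-x]$ (case (d)). Each case follows by matching Jacquet modules.

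The main obstacle throughout is the bookkeeping of the interaction between the $\GL$-segments meeting $\rho|\cdot|^x$ or $\rho|\cdot|^{-x}$ and the Arthur-type representation $\pi_A$: extracting a $\rho|\cdot|^x$ from a segment ending at $-x$ is productive only while $\pi_A$ has not used up its quota of $m'$ inputs $\rho|\cdot|^{-x}$, and a $\rho|\cdot|^x$ freed from $\pi_A$ either reaches the front or is reabsorbed by a $\Delta_\rho[x-1,\cdot]$-segment --- the latter partially undoing the change $\psi\mapsto\psi'$. Keeping track of the correct priority order among these moves is what pins down the cutoff index in the definition of $y_i'$ and the nested $\max$'s appearing in $k$, $\psi'$ and $l'$; everything else reduces to Tadi\'c's formula together with the two input results \cite[Theorem 5.11]{LM} and Theorem~\ref{der-special}.
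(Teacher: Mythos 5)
Your proposal is correct and follows essentially the same route as the paper: the paper also obtains the derivative formula by combining \cite[Theorem 5.11]{LM} (for the $\GL$-segments governed by the best matching functions) with Theorem \ref{der-special} (for the block $\soc((\rho|\cdot|^{-x})^s\rtimes\pi_A)$), glued by exactly the sequence of socle manipulations you sketch --- these are Jantzen's Claims 1--4 from \cite[\S 3.3]{J-dual} --- and it verifies (a)--(d) by the same check $D_{\rho|\cdot|^x}^{(k+1)}(S_{\rho|\cdot|^x}^{(1)}(\pi))=D_{\rho|\cdot|^x}^{(k)}(\pi)$. The only imprecision is your phrase that the freed $\rho|\cdot|^x$'s are ``reabsorbed by a $\Delta_\rho[x-1,\cdot]$-segment'': those segments are unchanged in the final Langlands data; the blocked copies are instead pushed back into $\pi_A$ via $S_{\rho|\cdot|^x}^{(k_2)}(\pi_2)$, which is what truncates $\psi\mapsto\psi'$ --- the net formulas you state are nonetheless the correct ones.
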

\begin{proof}
To obtain the formula for the highest derivative, 
we use Jantzen's algorithm \cite[\S 3.3]{J-dual} together with \cite[Theorem 5.11]{LM} and Theorem \ref{der-special}.
\begin{enumerate}
\item
Recall that
\[
\pi = \soc\left( L(\Delta_{\rho_1}[x_1,y_1], \dots, \Delta_{\rho_r}[x_r,y_r]) \rtimes \pi_A \right)
\]
with $\pi_A = L(\Delta_\rho[x-1,-x]^t; \pi(\phi, \eta))$ and 
$\Delta_{\rho_i}[x_i,y_i] \not\cong \Delta_\rho[x-1,-x]$ for all $i = 1, \dots,r$.

\item
By \cite[Theorem 5.11]{LM}, 
we can compute the highest right $\rho|\cdot|^{-x}$-derivative 
\[
R_{\rho|\cdot|^{-x}}^{(s)}(L(\Delta_{\rho_1}[x_1,y_1], \dots, \Delta_{\rho_r}[x_r,y_r]))
= L(\Delta_{\rho_1}[x_1,y_1''], \dots, \Delta_{\rho_r}[x_r,y_r'']), 
\]
where 
\[
y_i'' = \left\{
\begin{aligned}
&-(x-1) \iif i \in B_{\rho|\cdot|^{x}}^c, \\
&y_i \other.
\end{aligned}
\right. 
\]
In particular, $s = |B_{\rho|\cdot|^{x}}^c|$.
Jantzen's Claim 1 in \cite[\S 3.3]{J-dual} says that
\[
\pi = \soc\left( L(\Delta_{\rho_1}[x_1,y_1''], \dots, \Delta_{\rho_r}[x_r,y_r'']) \rtimes \pi_1 \right)
\]
with $\pi_1 \coloneqq \soc((\rho|\cdot|^{-x})^s \rtimes \pi_A)$.

\item
By Theorem \ref{der-special}, 
the highest $\rho|\cdot|^x$-derivative $\pi_2 \coloneqq D_{\rho|\cdot|^x}^{(k_1)}(\pi_1)$ of $\pi_1$ is 
\[
\pi_2 = \soc\left( (\rho|\cdot|^{-x})^{\min\{s,m'\}} \rtimes 
\pi(\psi - (\rho \boxtimes S_{2x+1})^m + (\rho \boxtimes S_{2x-1})^m, l+m,\eta) \right)
\]
with $k_1 = m + \max\{s-m', 0\}$. 
Jantzen's Claim 2 in \cite[\S 3.3]{J-dual} says that
\[
\pi = 
\soc\left( 
L(\Delta_{\rho_1}[x_1,y_1''], \dots, \Delta_{\rho_r}[x_r,y_r''], (\rho|\cdot|^x)^{k_1}) \rtimes \pi_2 
\right).
\]

\item
We will apply \cite[Theorem 5.11]{LM} 
to compute the highest left $\rho|\cdot|^x$-derivative of
$L(\Delta_{\rho_1}[x_1,y_1''], \dots, \Delta_{\rho_r}[x_r,y_r''], (\rho|\cdot|^x)^{k_1})$. 
To do this, we have to replace $A_{\rho|\cdot|^x}$ with $A_{\rho|\cdot|^x} \cup \{r+1, \dots, r+k_1\}$, 
where we set $\Delta_{\rho_i}[x_i,y_i] = \rho|\cdot|^x$ for $i = r+1, \dots, r+k_1$.
Note that any $a' \in \{r+1, \dots, r+k_1\}$ is bigger than any element of $A_{\rho|\cdot|^x}$
with respect to the order of $A_{\rho|\cdot|^x} \cup \{r+1, \dots, r+k_1\}$, 
and $a' \rel a$ for every $a \in A_{\rho|\cdot|^{x-1}}$. 
Hence the image of the resulting best matching function is 
\[
A_{\rho|\cdot|^x}^0 \cup 
\left\{ r+i \;\middle|\; 1 \leq i \leq \min\{k_1, |A_{\rho|\cdot|^{x-1}}^c|\} \right\}.
\]
Therefore, with $k_2 = \min\{k_1, |A_{\rho|\cdot|^{x-1}}^c|\}$
and $k = |A_{\rho|\cdot|^{x}}^c| + k_1-k_2$, 
the highest left $\rho|\cdot|^x$-derivative is 
\begin{align*}
&L_{\rho|\cdot|^x}^{(k)}
\left(L(\Delta_{\rho_1}[x_1,y_1''], \dots, \Delta_{\rho_r}[x_r,y_r''], (\rho|\cdot|^x)^{k_1})\right)
\\&=
L(\Delta_{\rho_1}[x_1',y_1''], \dots, \Delta_{\rho_r}[x_r',y_r''], (\rho|\cdot|^x)^{k_2}), 
\end{align*}
where $x_i'$ is as in the statement of this theorem.
Then the highest $\rho|\cdot|^x$-derivative of $\pi$ is 
\[
D_{\rho|\cdot|^x}^{(k)}(\pi)
= \soc\left(
L(\Delta_{\rho_1}[x_1',y_1''], \dots, \Delta_{\rho_r}[x_r',y_r''], (\rho|\cdot|^x)^{k_2}) \rtimes \pi_2 
\right).
\]

\item
Jantzen's Claim 3 in \cite[\S 3.3]{J-dual} says that
\[
D_{\rho|\cdot|^x}^{(k)}(\pi)
= \soc\left(
L(\Delta_{\rho_1}[x_1',y_1''], \dots, \Delta_{\rho_r}[x_r',y_r'']) \rtimes S_{\rho|\cdot|^x}^{(k_2)}(\pi_2)
\right). 
\]
By Theorem \ref{der-special}, we have
\[
S_{\rho|\cdot|^x}^{(k_2)}(\pi_2) = \soc((\rho|\cdot|^{-x})^{s'} \rtimes \pi_A'), 
\]
where 
$\pi_A'$ is as in the statement of this theorem, 
and $s' = \min\{s,m'\} + \max\{k_2-m,0\}$. 
Note that $s' \leq s$. 

\item
Finally, note that
\begin{itemize}
\item
if $s'=s$, then $m'+\max\{|A_{\rho|\cdot|^{x-1}}^c|-m,0\} \geq s$, 
so that $y_i' = y_i$ for all $i=1, \dots, r$; 
\item
if $s' < s$, then $s > m'$ and $k_1 = m+s-m' > k_2 = |A_{\rho|\cdot|^{x-1}}^c|$
so that 
$s' = m'+\max\{|A_{\rho|\cdot|^{x-1}}^c|-m,0\}$. 
\end{itemize}
By \cite[Theorem 5.11]{LM}, we have
\begin{align*}
&\soc\left(
L(\Delta_{\rho_1}[x_1',y_1''], \dots, \Delta_{\rho_r}[x_r',y_r'']) \times (\rho|\cdot|^{-x})^{s'}
\right)
\\&= L(\Delta_{\rho_1}[x_1',y_1'], \dots, \Delta_{\rho_r}[x_r',y_r']),  
\end{align*}
where $y_i'$ is as in the statement of this theorem.
Jantzen's Claim 4 in \cite[\S 3.3]{J-dual} says that
\[
D_{\rho|\cdot|^x}^{(k)}(\pi)
= \soc\left(
L(\Delta_{\rho_1}[x_1',y_1'], \dots, \Delta_{\rho_r}[x_r',y_r']) \rtimes \pi_A'
\right).
\]
This gives the Langlands data of $D_{\rho|\cdot|^x}^{(k)}(\pi)$.
\end{enumerate}

Recall that $S_{\rho|\cdot|^x}^{(1)}(\pi)$ is an irreducible representation determined by the relation
\[
D_{\rho|\cdot|^x}^{(k+1)} \left( S_{\rho|\cdot|^x}^{(1)}(\pi) \right) = D_{\rho|\cdot|^x}^{(k)}(\pi). 
\]
One can easily check this equation for the representations given in (a), (b), (c) and (d).
\end{proof}

As an application of Proposition \ref{soc-negative} and Theorem \ref{soc-positive}, 
we have a combinatorial irreducibility criterion for $\rho|\cdot|^x \rtimes \pi$ as follows.
\begin{cor}\label{irr_cusp}
Notation is as above. 
Suppose that $x >0$, $x \in (1/2)\Z$ 
and that $\rho \boxtimes S_{2x+1}$ is self-dual of the same type as $\phi$. 
Then the parabolically induced representation $\rho|\cdot|^x \rtimes \pi$ is irreducible 
if and only if all of the following conditions hold. 
\begin{itemize}
\item
$A_{\rho|\cdot|^{-x-1}}^c = \emptyset$; 
\item
$|B_{\rho|\cdot|^{x}}^c| \geq m_\psi(\rho \boxtimes S_{2x-1})$; 
\item
$m_\psi(\rho \boxtimes S_{2x+1})
+|B_{\rho|\cdot|^{x}}^c|
-m_\psi(\rho \boxtimes S_{2x-1}) 
\geq |A_{\rho|\cdot|^{x-1}}^c|$;
\item
$B_{\rho|\cdot|^{x-1}}^c = \emptyset$.
\end{itemize}
\end{cor}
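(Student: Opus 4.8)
The plan is to reduce the irreducibility of $\rho|\cdot|^x\rtimes\pi$ to an equality of two explicit Langlands data. Since $\rho$ is self-dual and $x>0$, the representation $\rho|\cdot|^x$ is not self-dual, so both $\rho|\cdot|^x\rtimes\pi$ and $\rho|\cdot|^{-x}\rtimes\pi$ are SI by Proposition \ref{SI}. The composition of the MVW-functor with the contragredient is an exact contravariant endofunctor of $\Rep(G_m)$ that fixes the isomorphism class of each irreducible representation; it therefore preserves composition factors and interchanges the socle with the maximal semisimple quotient. Applied to $\rho|\cdot|^x\rtimes\pi$ it produces $(\rho|\cdot|^x\rtimes\pi^\MVW)^\vee\cong(\rho|\cdot|^x\rtimes\pi^\vee)^\vee\cong\rho^\vee|\cdot|^{-x}\rtimes\pi=\rho|\cdot|^{-x}\rtimes\pi$, so that $[\rho|\cdot|^x\rtimes\pi]=[\rho|\cdot|^{-x}\rtimes\pi]$ and the maximal semisimple quotient of $\rho|\cdot|^x\rtimes\pi$ equals $\soc(\rho|\cdot|^{-x}\rtimes\pi)$. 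As both induced representations are SI with multiplicity-one socle and have the same composition factors, a short argument with the socle series shows that $\rho|\cdot|^x\rtimes\pi$ is irreducible if and only if $\soc(\rho|\cdot|^x\rtimes\pi)\cong\soc(\rho|\cdot|^{-x}\rtimes\pi)$.

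Next I would compute both socles using the formulas already established. The socle $\soc(\rho|\cdot|^x\rtimes\pi)=S_{\rho|\cdot|^x}^{(1)}(\pi)$ is given by Theorem \ref{soc-positive}, which splits into one of the four mutually exclusive and exhaustive cases (a)--(d) according to the sizes of $m=m_\psi(\rho\boxtimes S_{2x+1})$, $m'=m_\psi(\rho\boxtimes S_{2x-1})$, $|A_{\rho|\cdot|^{x-1}}^c|$, $|B_{\rho|\cdot|^{x}}^c|$, and to whether $B_{\rho|\cdot|^{x-1}}^c$ is empty. The socle $\soc(\rho|\cdot|^{-x}\rtimes\pi)$ is an instance of the negative case of Proposition \ref{soc-negative} (with $-x<0$ playing the role of $x$): one takes as inducing data of $\pi$ its full Langlands data, including the $t$ copies of $\Delta_\rho[x-1,-x]$ absorbed into $\pi_A$; since $x-1\notin\{-x,-x-1\}$ for $x>0$, these extra segments do not affect the sets $A_{\rho|\cdot|^{-x}}$, $A_{\rho|\cdot|^{-x-1}}$ nor the best matching between them (the value $x=1/2$ being governed separately by the conventions of Theorem \ref{der-special}). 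Proposition \ref{soc-negative} then gives two cases: if $A_{\rho|\cdot|^{-x-1}}^c\neq\emptyset$, the Langlands data of the socle is that of $\pi$ with a segment starting at $x$-coordinate $-x-1$ replaced by the one starting at $-x$; if $A_{\rho|\cdot|^{-x-1}}^c=\emptyset$, the socle is obtained from $\pi$ by inserting $\Delta_\rho[-x,-x]=\rho|\cdot|^{-x}$.

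The key point is that case (d) of Theorem \ref{soc-positive} produces exactly ``the Langlands data of $\pi$ with $\Delta_\rho[-x,-x]$ inserted'', which is the same output as case (b) of Proposition \ref{soc-negative}, whereas each of cases (a), (b), (c) of Theorem \ref{soc-positive} and case (a) of Proposition \ref{soc-negative} yields a datum of a visibly different shape — a lengthened top segment, an altered Arthur-type factor $\pi_A$, a shortened bottom segment, or a shortened very-negative segment — so by the uniqueness in the Langlands classification none of them can coincide with that output. Hence $\soc(\rho|\cdot|^x\rtimes\pi)\cong\soc(\rho|\cdot|^{-x}\rtimes\pi)$ holds precisely when we are simultaneously in case (d) of Theorem \ref{soc-positive} and in case (b) of Proposition \ref{soc-negative}. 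Reading off their defining conditions, case (d) amounts to the three requirements ``$|B_{\rho|\cdot|^{x}}^c|\geq m_\psi(\rho\boxtimes S_{2x-1})$'', ``$m_\psi(\rho\boxtimes S_{2x+1})+|B_{\rho|\cdot|^{x}}^c|-m_\psi(\rho\boxtimes S_{2x-1})\geq|A_{\rho|\cdot|^{x-1}}^c|$'', and ``$B_{\rho|\cdot|^{x-1}}^c=\emptyset$'', while case (b) of Proposition \ref{soc-negative} amounts to ``$A_{\rho|\cdot|^{-x-1}}^c=\emptyset$''; together these are the four listed conditions, which finishes the proof.

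The main obstacle will be the third paragraph: one must check carefully, from the explicit Langlands data, that the representations output by the different cases of the two results are pairwise non-isomorphic except for the matching pair (d)/(b) — this requires keeping precise track of segment multiplicities and of the labels $(\psi,l,\eta)$ of the Arthur-type factor when $\pi$ already contains segments of the shape $\Delta_\rho[-x,-x]$ or $\Delta_\rho[x-1,-x]$ — and that the borderline value $x=1/2$, where $m'$ and some of the sets must be read through the formal conventions, is consistent with the second condition failing (so that $\rho|\cdot|^{1/2}\rtimes\pi$ is always reducible under the standing hypothesis that $\rho\boxtimes S_2$ is self-dual of the same type as $\phi$).
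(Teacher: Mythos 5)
Your proposal is correct and follows essentially the same route as the paper's (very terse) proof: reduce irreducibility to the isomorphism $S_{\rho|\cdot|^x}^{(1)}(\pi)\cong S_{\rho|\cdot|^{-x}}^{(1)}(\pi)$, compute both socles via Theorem \ref{soc-positive} and Proposition \ref{soc-negative}, and observe that the outputs agree exactly when both fall into the ``insert $\rho|\cdot|^{-x}$'' cases, which yields the four listed conditions. The MVW/SI justification of the first reduction and the case-by-case distinctness check in your third paragraph are details the paper leaves implicit, and your handling of them (including the $x=1/2$ boundary) is sound.
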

\begin{proof}
Note that $\rho|\cdot|^x \rtimes \pi$ is irreducible 
if and only if $S_{\rho|\cdot|^x}^{(1)}(\pi) \cong S_{\rho|\cdot|^{-x}}^{(1)}(\pi)$. 
By Proposition \ref{soc-negative} and Theorem \ref{soc-positive}, 
this is equivalent to the case where 
the Langlands data of $S_{\rho|\cdot|^{-x}}^{(1)}(\pi)$ and $S_{\rho|\cdot|^x}^{(1)}(\pi)$ 
are obtained from those of $\pi$ by inserting $\rho|\cdot|^{-x}$.
\end{proof}

As a special case, when $\pi = \pi(\phi, \eta)$ is tempered, 
since $A_{\rho^\vee|\cdot|^{-x-1}}, A_{\rho|\cdot|^{x-1}}, A_{\rho|\cdot|^x}, B_{\rho|\cdot|^{x-1}}, B_{\rho|\cdot|^x}$
are all the empty set, 
we see that $\rho|\cdot|^x \rtimes \pi$ if and only if $m_\psi(\rho \boxtimes S_{2x-1}) = 0$, 
which is equivalent that 
\begin{itemize}
\item
$\phi \not\supset \rho \boxtimes S_{2x-1}$; or
\item
$m_\phi(\rho \boxtimes S_{2x-1}) = 1$, $m_\phi(\rho \boxtimes S_{2x+1}) > 0$ 
and $\eta(\rho \boxtimes S_{2x-1}) \not= \eta(\rho \boxtimes S_{2x+1})$. 
\end{itemize}
This special case was already known by Jantzen \cite[Theorem 4.7]{J-irrr}. 

\subsection{Bad parity case}\label{sub:posbad}
We treat now the bad parity case. 
Namely, we assume that $\rho \boxtimes S_{2x+1}$ is self-dual of the opposite type to elements in $\Phi_\gp(G)$, 
and we take $\pi \in \Irr(G_n)$ such that $\scusp(\pi) \subset \Z_{\rho|\cdot|^x} \cup \{\sigma\}$ 
for some $\sigma \in \Cusp^G$. 
\par

Remark that Jantzen's algorithm \cite[\S 3.3]{J-dual} to compute the highest $\rho|\cdot|^x$-derivatives
can be applied to the bad parity case. 
According to this algorithm (see (2) in the proof of Theorem \ref{soc-positive}), 
we have to treat a $\rho|\cdot|^x$-bad representation of the form
\[
\pi_1 = L((\rho|\cdot|^{-x})^s, \Delta_\rho[x-1,-x]^t; \pi(\phi, \eta))
\]
with $\phi \in \Phi_\temp(G_n)$ and $s,t \geq 0$. 
Here, we may assume that $s = 0$ if $x=1/2$ since $\rho|\cdot|^{-1/2} = \Delta_\rho[-1/2,-1/2]$.
By the assumption of the bad parity, if we write $\sigma = \pi(\phi_\sigma, \eta_\sigma)$, 
then $\phi = \phi_\sigma \oplus (\oplus_{i=1}^r (\rho \boxtimes S_{2x_i+1})^{m_i})$
with $x_i \in x + \Z$ so that $\Sc_\phi \cong \Sc_{\phi_\sigma}$, and $\eta = \eta_\sigma$.
Moreover, the multiplicity $m_i$ is even for all $i$.
The following is an extension of \cite[Propositions 8.5, 8.6]{J-dual}.

\begin{prop}\label{bad-special}
Notation is as above. 
Here, when $x=1/2$, we assume that $s=0$.
Set $m \coloneqq m_\phi(\rho \boxtimes S_{2x+1})$ and $m' \coloneqq m_\phi(\rho \boxtimes S_{2x-1})$, 
both of which are even.
Take $\kappa \in \{0,1\}$ such that $t \equiv \kappa \bmod 2$. 
Then the highest $\rho|\cdot|^x$-derivative $D_{\rho|\cdot|^x}^{(k)}(\pi_1)$ is equal to
\[
L(
(\rho|\cdot|^{-x})^{\min\{s,m'+\kappa\}}, \Delta_\rho[x-1,-x]^{t-\kappa};
\pi(\phi - (\rho \boxtimes S_{2x+1})^m + (\rho \boxtimes S_{2x-1})^{m+2\kappa}, \eta)
)
\]
with $k = m+\kappa+\max\{s-m'-\kappa,0\}$.
\end{prop}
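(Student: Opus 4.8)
The plan is to follow the strategy already employed in the good parity case (Theorem~\ref{soc-positive}), reducing everything to the general linear group computations of \cite[Theorem 5.11]{LM} and to the bad parity analogue of the ``special example'' computed by Jantzen in \cite[Propositions 8.5, 8.6]{J-dual}. Throughout, one exploits that in the bad parity case the component group does not see the extra summands $\rho \boxtimes S_{2x_i+1}$, so $\pi(\phi,\eta)$ depends on $\phi$ only through its summands' multiplicities (and the fixed $\sigma$); in particular all relevant multiplicities $m$, $m'$ are even, which is what makes the Steinberg exponents $[x-1,-x]$ behave in pairs.

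First I would record the exact shape of $\pi_1$ and of the relevant Jacquet modules. Since $\rho|\cdot|^{-x} = \Delta_\rho[-x,-x]$ is itself a Steinberg representation, $\pi_1 = L((\rho|\cdot|^{-x})^s, \Delta_\rho[x-1,-x]^t; \pi(\phi,\eta))$ is a genuine Langlands subrepresentation, and one can read off $\mu^\ast(\pi_1)$ via Tadi\'c's formula (Proposition~\ref{eq:Tadic}), using the known Jacquet modules of $\Delta_\rho[x-1,-x]$ and of tempered $\pi(\phi,\eta)$ of bad parity (the latter being $\rho|\cdot|^x$-reduced for $x>0$ by Casselman's criterion, together with the explicit reducibility behavior of $\rho|\cdot|^x \rtimes \pi(\phi,\eta)$ dictated by $m' = m_\phi(\rho \boxtimes S_{2x-1})$). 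The parity $\kappa$ of $t$ enters because pairing up the $t$ copies of $\Delta_\rho[x-1,-x]$ with the $m'$ (even) copies of $\rho \boxtimes S_{2x-1}$ inside $\phi$ leaves a residue of size $\kappa$, and one copy of $\Delta_\rho[x-1,-x]$ can be ``absorbed'' by creating two new copies of $\rho \boxtimes S_{2x-1}$ in $\phi$; this is precisely the mechanism that produces the $+2\kappa$ and the $t-\kappa$ in the statement.

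Next I would run Jantzen's four-step algorithm \cite[\S 3.3]{J-dual} exactly as in the proof of Theorem~\ref{soc-positive}: peel off the highest right $\rho|\cdot|^{-x}$-derivative (here trivial since the inducing data already has the $(\rho|\cdot|^{-x})^s$ in the correct position), apply the tempered/bad-parity special case to the $G$-factor, then compute the highest left $\rho|\cdot|^x$-derivative on the $\GL$-side and reassemble. For the $G$-factor one needs the bad parity version of Theorem~\ref{der-special}; the honest way is to establish it directly by the same M\oe glin-type analysis restricted to the bad parity line, or — more economically — to reduce to the good parity statement via a correspondence of reducibility points, noting that on a single bad line the representation theory of $G_n$ is governed by the same combinatorics as a good line for an auxiliary group, so that the formula of \cite[Propositions 8.5, 8.6]{J-dual} (giving $D^{(k)}_{\rho|\cdot|^x}$ of $L((\rho|\cdot|^{-x})^s, \Delta_\rho[x-1,-x]^t; \pi(\phi,\eta))$) applies. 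Plugging this in, the leftover count $k = m + \kappa + \max\{s - m' - \kappa, 0\}$ and the residual inducing data $((\rho|\cdot|^{-x})^{\min\{s, m'+\kappa\}}, \Delta_\rho[x-1,-x]^{t-\kappa}; \pi(\phi - (\rho\boxtimes S_{2x+1})^m + (\rho\boxtimes S_{2x-1})^{m+2\kappa}, \eta))$ drop out of the bookkeeping, exactly as in step (3) of the proof of Theorem~\ref{soc-positive}.

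The main obstacle I expect is step (2)/(3): establishing the bad parity analogue of Theorem~\ref{der-special}, i.e.\ controlling the highest $\rho|\cdot|^x$-derivative of $\soc((\rho|\cdot|^{-x})^s \rtimes \pi_A)$ when $\pi_A$ involves the Steinberg blocks $\Delta_\rho[x-1,-x]^t$ over a tempered representation of bad parity. In the good parity case this was available from \cite[Theorem 4.1]{At2} and \cite[Theorem 3.3]{J-dual} via M\oe glin's explicit $A$-packets; in the bad parity case there are no $A$-packets, so one must either push through Jantzen's direct Jacquet module argument \cite[\S 8]{J-dual} to cover the presence of the extra $(\rho|\cdot|^{-x})^s$ (Jantzen treats $s=0$), or carefully transfer the good parity statement along a change of reducibility line. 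The subtlety is the interaction between the $s$ copies of $\rho|\cdot|^{-x}$, the $t$ Steinberg blocks, and the $m'$ copies of $\rho\boxtimes S_{2x-1}$, which is why the answer caps the surviving $(\rho|\cdot|^{-x})$-exponent at $\min\{s, m'+\kappa\}$ rather than $\min\{s,m'\}$; verifying that this cap is correct (and that no further cancellation occurs) is the delicate point, and I would check it on the extremal cases $s \le m'$, $m' < s \le m'+\kappa$, and $s > m'+\kappa$ separately, using the irreducibility and SI statements of Proposition~\ref{010-1} and the relation $D_{\rho|\cdot|^x}^{(k+1)}(S_{\rho|\cdot|^x}^{(1)}(\pi_1)) = D_{\rho|\cdot|^x}^{(k)}(\pi_1)$ to pin down multiplicities.
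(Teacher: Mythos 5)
There is a genuine gap: your reduction strategy is circular at the decisive point. Proposition \ref{bad-special} \emph{is} the bad-parity analogue of Theorem \ref{der-special} --- the representation $\pi_1 = L((\rho|\cdot|^{-x})^s, \Delta_\rho[x-1,-x]^t;\pi(\phi,\eta))$ is already the ``special'' shape to which Jantzen's four-step algorithm reduces the general case (you notice yourself that the peeling steps are trivial here). So invoking that algorithm buys nothing, and everything hinges on the core computation you defer to ``the bad parity version of Theorem \ref{der-special}''. Neither of your two proposed routes delivers it: extending \cite[Propositions 8.5, 8.6]{J-dual} to $s>0$ is exactly the content to be proved, and you give no argument for how the $s$ copies of $\rho|\cdot|^{-x}$ interact with the Steinberg blocks; while the ``transfer along a change of reducibility line'' from the good-parity statement is not a rigorous device --- nothing in the paper or the cited literature identifies a bad line with a good line of an auxiliary group at the level of Jacquet modules, and the combinatorics genuinely differ (on a bad line the inductions are irreducible rather than decomposing into packets, and $m,m'$ are forced even).

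The actual proof is a direct computation whose key inputs are absent from your sketch. One first uses that, because $m$ and $m'$ are even, $\pi(\phi,\eta)=\Delta_\rho[x-1,-(x-1)]^{m'/2}\times\Delta_\rho[x,-x]^{m/2}\rtimes\pi_0$ is an \emph{irreducible} induction, and that $\Delta_\rho[x-1,-x]\times\Delta_\rho[x-1,-(x-1)]^{m'/2}\times\Delta_\rho[x,-x]^{m/2}\rtimes\pi_0$ is irreducible by \cite[Th{\'e}or{\`e}me (i)]{MW}. For $t$ even, a unitarity/SI argument replaces $\Delta_\rho[x-1,-x]^t$ by $L_0^{t/2}$ with $L_0=L(\Delta_\rho[x-1,-x],\Delta_\rho[x,-(x-1)])$, and the factor $(\rho|\cdot|^{-x})^s$ is absorbed into the ladders $L_1=L(\rho|\cdot|^{-x},\Delta_\rho[x-1,-(x-1)])$ and $L_2=L(\rho|\cdot|^{-x},\Delta_\rho[x-1,-(x-1)],\rho|\cdot|^x)$; their commutation with the relevant Steinberg representations (\cite[Proposition 5.15 (3), Corollary 6.2]{LM}) and the irreducibility of $L_1\rtimes\sigma$ (\cite[Theorem 1.2]{LT}) are what produce the cap $\min\{s,m'\}$ and the count $k=m+\max\{s-m',0\}$. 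For $t$ odd one needs the additional fact that $L_0^{(t-1)/2}\times\Delta_\rho[x,-(x-1)]\times\Delta_\rho[x-1,-(x-1)]^{m'/2}\times\Delta_\rho[x,-x]^{m/2}\rtimes\pi_0$ is irreducible, proved by combining the MVW-functor with the SI property of standard modules; this is the precise mechanism behind the $+2\kappa$ and $t-\kappa$ that you describe only heuristically. Without these ingredients the ``bookkeeping'' in your last paragraph cannot be carried out.
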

\begin{proof}
If we write $\pi_0 \coloneqq \pi(\phi - (\rho \boxtimes S_{2x+1})^m - (\rho \boxtimes S_{2x-1})^{m'}, \eta)$, 
then 
\[
\pi(\phi, \eta) = \Delta_\rho[x-1,-(x-1)]^{\half{m'}} \times \Delta_\rho[x,-x]^{\half{m}} \rtimes \pi_0
\]
is an irreducible induction.
Moreover, 
\[
\Delta_\rho[x-1,-x] \times \Delta_\rho[x-1,-(x-1)]^{\half{m'}} \times \Delta_\rho[x,-x]^{\half{m}} \rtimes \pi_0
\] 
is always irreducible by \cite[Th{\'e}or{\`e}me (i)]{MW}. 
Also, any subquotient of $\Delta_\rho[x-1,-x] \times \Delta_\rho[x,-(x-1)]$ is 
$\Delta_\rho[x-1,-(x-1)] \times \Delta_\rho[x,-x]$ or $L_0 \coloneqq L(\Delta_\rho[x-1,-x], \Delta_\rho[x,-(x-1)])$, 
both of which commute with all of $\Delta_\rho[x-1,-(x-1)]$, $\Delta_\rho[x,-x]$ and $\Delta_\rho[x-1,-x]$.
\par

First we assume that $t$ is even. 
By considering the Langlands data, we have
\begin{align*}
&\soc\left(
\Delta_\rho[x-1,-x]^t \times \Delta_\rho[x-1,-(x-1)]^{\half{m'}} \times \Delta_\rho[x,-x]^{\half{m}}
\rtimes \pi_0
\right)
\\&\hookrightarrow
L_0^{\half{t}}
\times \Delta_\rho[x-1,-(x-1)]^{\half{m'}} \times \Delta_\rho[x,-x]^{\half{m}} \rtimes \pi_0
\\&\hookrightarrow 
\Delta_\rho[x-1,-x]^t \times \Delta_\rho[x-1,-(x-1)]^{\half{m'}} \times \Delta_\rho[x,-x]^{\half{m}} \rtimes \pi_0. 
\end{align*}
Since the middle induced representation is unitary and 
since the last induced representation is a standard module so that it is SI, 
we see that the first inclusion map is an isomorphism.
In particular, $\pi_1$ is equal to the socle of
\begin{align*}
&(\rho|\cdot|^{-x})^s \times
L_0^{\half{t}}
\times \Delta_\rho[x-1,-(x-1)]^{\half{m'}} \times \Delta_\rho[x,-x]^{\half{m}} \rtimes \pi_0
\\&\cong 
L_0^{\half{t}} \times
(\rho|\cdot|^{-x})^s \times \Delta_\rho[x-1,-(x-1)]^{\half{m'}} \times \Delta_\rho[x,-x]^{\half{m}} \rtimes \pi_0.
\end{align*}
Therefore, we may replace $(\rho|\cdot|^{-x})^s \times \Delta_\rho[x-1,-(x-1)]^{\half{m'}}$ with 
\[
\tag{$\ast$}
(\rho|\cdot|^{-x})^{\max\{s-\half{m'},0\}} 
\times L_1^{\min\{s,\half{m'}\}} 
\times \Delta_\rho[x-1,-(x-1)]^{\max\{\half{m'}-s,0\}}, 
\]
where $L_1 \coloneqq L(\rho|\cdot|^{-x}, \Delta_\rho[x-1,-(x-1)])$. 
Moreover,
since $\rho|\cdot|^{-x} \times \Delta_\rho[x,-x]^{\half{m}} \rtimes \pi_0$ is irreducible by \cite[Th{\'e}or{\`e}me (i)]{MW}, 
if $s \geq \half{m'}$, then we may replace $(\ast)$ with 
\[
\tag{$\ast\ast$}
(\rho|\cdot|^{-x})^{\max\{s-m',0\}} \times L_2^{\min\{s-\half{m'}, \half{m'}\}} \times  L_1^{\max\{m'-s,0\}}, 
\]
where $L_2 \coloneqq L(\rho|\cdot|^{-x}, \Delta_\rho[x-1,-(x-1)], \rho|\cdot|^x)$. 
Note that if $x \geq 1$, by \cite[Proposition 5.15 (3)]{LM}, 
the ladder representations $L_0$, $L_1$ and $L_2$ commute with all of 
\[
\Delta_\rho[x,-x],\; \Delta_\rho[x-1,-x],\; \Delta_\rho[x,-(x-1)],\; \Delta_\rho[x-1,-(x-1)].
\]
Therefore, with 
\[
k = m+\max\{s-m',0\}, 
\]
the $\rho|\cdot|^x$-derivative $D_{\rho|\cdot|^x}^{(k)}(\pi)$ is the highest and is a subrepresentation of 
\[
\left\{
\begin{aligned}
&L_0^{\half{t}} \times L_1^{s} \times \Delta_\rho[x-1,-(x-1)]^{\half{m'}-s+\half{m}} \rtimes \pi_0
\iif s \leq \half{m'}, \\
&L_0^{\half{t}} \times L_2^{s-\half{m'}} \times  L_1^{m'-s} \times \Delta_\rho[x-1,-(x-1)]^{\half{m}} \rtimes \pi_0
\iif \half{m'} < s \leq m', \\
&
L_0^{\half{t}} \times L_2^{\half{m'}} \times \Delta_\rho[x-1,-(x-1)]^{\half{m}} \rtimes \pi_0
\iif s > m'.
\end{aligned}
\right.
\]
Since $L_2 \times L_1 \cong L_1 \times L_2$ by \cite[Corollary 6.2]{LM} 
and since $L_1 \rtimes \sigma$ is irreducible by \cite[Theorem 1.2]{LT}, 
this representation is a subrepresentation of 
\[
\left\{
\begin{aligned}
&(\rho|\cdot|^{-x})^s \times \Delta_\rho[x-1,-x]^t \times \Delta_\rho[x-1,-(x-1)]^{\half{m'+m}} \rtimes \pi_0
\iif s \leq m', \\
&(\rho|\cdot|^{-x})^{m'} \times \Delta_\rho[x-1,-x]^t \times \Delta_\rho[x-1,-(x-1)]^{\half{m'+m}} \rtimes \pi_0
\iif s > m'.
\end{aligned}
\right.
\]
Since 
$\Delta_\rho[x-1,-(x-1)]^{\half{m'+m}} \rtimes \pi_0 
= \pi(\phi - (\rho \boxtimes S_{2x+1})^m + (\rho \boxtimes S_{2x-1})^{m}, \eta)$,
we obtain the case where $t$ is even. 
\par

Next, we assume that $t$ is odd.
By considering the Langlands data, we have
\begin{align*}
&\soc\left(
\Delta_\rho[x-1,-x]^t \times \Delta_\rho[x-1,-(x-1)]^{\half{m'}} \times \Delta_\rho[x,-x]^{\half{m}}
\rtimes \pi_0
\right)
\\&\hookrightarrow
L_0^{\half{t-1}} \times \Delta_\rho[x-1,-x]
\times \Delta_\rho[x-1,-(x-1)]^{\half{m'}} \times \Delta_\rho[x,-x]^{\half{m}} \rtimes \pi_0
\\&\cong
L_0^{\half{t-1}} \times \Delta_\rho[x,-(x-1)]
\times \Delta_\rho[x-1,-(x-1)]^{\half{m'}} \times \Delta_\rho[x,-x]^{\half{m}} \rtimes \pi_0. 
\end{align*}
Note that the middle induced representation is SI since it is a subrepresentation of a standard module. 
On the other hand, by taking the MVW-functor and the contragredient functor, 
we see that the unique irreducible subrepresentation of the middle induced representation
is also an irreducible quotient of the last induced representation. 
By the last isomorphism, this means that
$L_0^{\half{t-1}} \times \Delta_\rho[x,-(x-1)]
\times \Delta_\rho[x-1,-(x-1)]^{\half{m'}} \times \Delta_\rho[x,-x]^{\half{m}} \rtimes \pi_0$
is irreducible. 
Therefore, by the same argument as the case where $t$ is even, 
with $k =  m + 1 + \max\{s-m'-1,0\}$, the $\rho|\cdot|^x$-derivative $D_{\rho|\cdot|^x}^{(k)}(\pi)$ is highest 
and is a subrepresentation of 
\[
\left\{
\begin{aligned}
&(\rho|\cdot|^{-x})^s \times \Delta_\rho[x-1,-x]^{t-1} 
\times \Delta_\rho[x-1,-(x-1)]^{\half{m'+m}+1} \rtimes \pi_0
\iif s \leq m'+1, \\
&(\rho|\cdot|^{-x})^{m'+1} \times \Delta_\rho[x-1,-x]^{t-1} 
\times \Delta_\rho[x-1,-(x-1)]^{\half{m'+m}+1} \rtimes \pi_0
\iif s > m'+1.
\end{aligned}
\right.
\]
Since 
$\Delta_\rho[x-1,-(x-1)]^{\half{m'+m}+1} \rtimes \pi_0 
= \pi(\phi - (\rho \boxtimes S_{2x+1})^m + (\rho \boxtimes S_{2x-1})^{m+2}, \eta)$,
we obtain the case where $t$ is odd. 
\end{proof}

Now we consider the general case. 
Let $\pi = L(\Delta_\rho[x_1,y_1], \dots, \Delta_\rho[x_{r'},y_{r'}]; \pi(\phi, \eta))$
with $x_1+y_1 \leq \dots \leq x_{r'}+y_{r'} < 0$ and $\phi \in \Phi_\temp(G)$. 
If we define $t, r \geq 0$ with $t+r = r'$ as in \S \ref{sub:posgood}, one can rewrite 
\[
\pi = \soc \left(L(\Delta_{\rho}[x_1,y_1], \dots, \Delta_{\rho}[x_r,y_r]) \rtimes \pi_A\right), 
\]
where 
\begin{itemize}
\item
$x_1+y_1 \leq \dots \leq x_r+y_r < 0$; 
\item
$\pi_A \coloneqq L(\Delta_\rho[x-1,-x]^t; \pi(\phi, \eta))$; 
\item
$[x_i,y_i] \not= [x-1,-x]$ for all $i = 1, \dots,r$.
\end{itemize}
Set $m \coloneqq m_\phi(\rho \boxtimes S_{2x+1})$ and $m' \coloneqq m_\phi(\rho \boxtimes S_{2x-1})$, 
both of which are even.
Take $\kappa \in \{0,1\}$ such that $t \equiv \kappa \bmod 2$.  
\par

Define 
\begin{align*}
A_{\rho|\cdot|^{x}} &\coloneqq \{i \in \{1,\dots,r\} \;|\; x_i = x\}, \\ 
B_{\rho|\cdot|^x} &\coloneqq \{i \in \{1,\dots,r\} \;|\; y_i = -x\}. 
\end{align*}
As in in the previous paragraph, we regard $A_{\rho|\cdot|^{x}}$ and $A_{\rho|\cdot|^{x-1}}$
(\resp $B_{\rho|\cdot|^{x}}$ and $B_{\rho|\cdot|^{x-1}}$)
as ordered sets, and take the traversal relation $\rel$. 
Let $f \colon A_{\rho|\cdot|^{x-1}}^0 \rightarrow A_{\rho|\cdot|^{x}}^0$ 
(\resp $g \colon B_{\rho|\cdot|^{x-1}}^0 \rightarrow B_{\rho|\cdot|^{x}}^0$) be the best matching function. 
Write $B_{\rho|\cdot|^{x}}^c = \{i_1, \dots, i_{s}\}$ with $i_1 < \dots < i_{s}$.
Note that $s > 0$ only if $x > 1$.

\begin{thm}\label{bad-soc-positive}
Notation is as above. 
Suppose that $x > 0$, $x \in (1/2)\Z$ 
and that $\rho \boxtimes S_{2x+1}$ is self-dual of the opposite type to elements in $\Phi_\gp(G)$. 
Then the highest $\rho|\cdot|^x$-derivative $D_{\rho|\cdot|^x}^{(k)}(\pi)$ is the unique irreducible subrepresentation of 
$L(\Delta_{\rho_1}[x'_1,y'_1], \dots, \Delta_{\rho_r}[x'_r,y'_r]) \rtimes \pi_A'$,
where
\begin{align*}
x_i' &= \left\{
\begin{aligned}
&x-1	\iif i \in A_{\rho|\cdot|^{x}}^c,\\
&x_i	\other, 
\end{aligned}
\right.  \\
y_i' &= \left\{
\begin{aligned}
&-(x-1)	
\iif \text{$i=i_j$, 
$j > m'+\kappa+\max\{|A_{\rho|\cdot|^{x-1}}^c|-m-\kappa,0\}$}, \\
&y_i	\other, 
\end{aligned}
\right.  
\end{align*}
and 
\begin{itemize}
\item
if $m+\kappa \leq |A_{\rho|\cdot|^{x-1}}^c|$, 
then $\pi_A' = \pi_A$; 
\item
if $m+\kappa > |A_{\rho|\cdot|^{x-1}}^c|$, 
then 
\[
\pi_A' = 
\left\{
\begin{aligned}
&L \left(\Delta_\rho[x-1,-x]^{t-\kappa}; 
\pi(\phi - (\rho \boxtimes S_{2x+1})^{m-v} + (\rho \boxtimes S_{2x-1})^{m-v+2\kappa}, \eta)
\right),\\
&L\left( \Delta_\rho[x-1,-x]^{t-\kappa+1};
\pi(\phi - (\rho \boxtimes S_{2x+1})^{m-v+1} + (\rho \boxtimes S_{2x-1})^{m-v-1+2\kappa}, \eta)
\right)
\end{aligned}
\right. 
\] 
according to $v = |A_{\rho|\cdot|^{x-1}}^c|$ is even or odd. 
\end{itemize}
In particular, 
\[
k = |A_{\rho|\cdot|^{x}}^c| 
+ \max\left\{
m+\kappa+\max\{|B_{\rho|\cdot|^{x}}^c|-m'-\kappa,0\}
-|A_{\rho|\cdot|^{x-1}}^c|,0
\right\}.
\]
Moreover: 
\begin{enumerate}
\item[(a)]
If $m+\kappa+\max\{|B_{\rho|\cdot|^{x}}^c|-m'-\kappa,0\} 
< |A_{\rho|\cdot|^{x-1}}^c|$, 
then the Langlands data of $S_{\rho|\cdot|^x}^{(1)}(\pi)$ can be obtained from those of $\pi$ 
by replacing $x_a = x-1$ with $x$, where $a$ is the minimum element of 
$A_{\rho|\cdot|^{x-1}}^c$. 

\item[(b)]
If $|B_{\rho|\cdot|^{x}}^c| < m'+\kappa$ 
and $m+\kappa \geq |A_{\rho|\cdot|^{x-1}}^c|$, 
the Langlands data of $S_{\rho|\cdot|^x}^{(1)}(\pi)$ can be obtained from those of $\pi$ 
by replacing $\pi_A$ with 
\[
S_{\rho|\cdot|^x}^{(1)}(\pi_A) = 
\left\{\begin{aligned}
&L\left(
\Delta_\rho[x-1,-x]^{t+1};
\pi(\phi - (\rho \boxtimes S_{2x-1})^2, \eta)
\right)
\iif \kappa = 0, \\
&L\left(
\Delta_\rho[x-1,-x]^{t-1}; 
\pi(\phi + (\rho \boxtimes S_{2x+1})^2, \eta)
\right)
\iif \kappa = 1. 
\end{aligned}
\right.
\]

\item[(c)]
If $|B_{\rho|\cdot|^{x}}^c| \geq m'+\kappa$, 
$m+|B_{\rho|\cdot|^{x}}^c|-m' 
\geq |A_{\rho|\cdot|^{x-1}}^c|$ 
and $B_{\rho|\cdot|^{x-1}}^c \not= \emptyset$, 
the Langlands data of $S_{\rho|\cdot|^x}^{(1)}(\pi)$ can be obtained from those of $\pi$ 
by replacing $y_b = -(x-1)$ with $-x$, 
where $b$ is the minimum element of $B_{\rho|\cdot|^{x-1}}^c$. 

\item[(d)]
If $|B_{\rho|\cdot|^{x}}^c| \geq m'+\kappa$, 
$m+|B_{\rho|\cdot|^{x}}^c|-m' 
\geq |A_{\rho|\cdot|^{x-1}}^c|$ 
and $B_{\rho|\cdot|^{x-1}}^c = \emptyset$, 
then the Langlands data of $S_{\rho|\cdot|^x}^{(1)}(\pi)$ can be obtained from those of $\pi$ 
by inserting $\rho|\cdot|^{-x} = \Delta_{\rho}[-x,-x]$.
\end{enumerate}
\end{thm}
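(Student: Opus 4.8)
The plan is to follow, step for step, the proof of Theorem~\ref{soc-positive}, substituting the bad parity inputs for the good parity ones. The engine is again Jantzen's algorithm \cite[\S 3.3]{J-dual}, which expresses $D_{\rho|\cdot|^x}^{(k)}(\pi)$ and $S_{\rho|\cdot|^x}^{(1)}(\pi)$ through four reduction steps: two of these are pure general linear group combinatorics, governed by \cite[Theorem~5.11]{LM}, and the remaining two require the highest $\rho|\cdot|^x$-derivative and the iterated $\rho|\cdot|^x$-socle of the ``simple'' representation
\[
\pi_1 = L((\rho|\cdot|^{-x})^s, \Delta_\rho[x-1,-x]^t; \pi(\phi,\eta)),
\]
which in the bad parity case is exactly the content of Proposition~\ref{bad-special} (in place of Theorem~\ref{der-special}). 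Since $\rho$ is of bad parity relative to $\sigma$, there is no $A$-packet bookkeeping along $\Z_{\rho|\cdot|^x}$: the relevant component group is trivial there and $\pi_A = L(\Delta_\rho[x-1,-x]^t;\pi(\phi,\eta))$ is just an honest Langlands datum. Hence the only genuinely new feature compared with Theorem~\ref{soc-positive} is the parity parameter $\kappa \in \{0,1\}$ with $t \equiv \kappa \bmod 2$, which records the single leftover copy of $\Delta_\rho[x-1,-x]$ after the copies pair off into $L_0 = L(\Delta_\rho[x-1,-x],\Delta_\rho[x,-(x-1)])$.

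Concretely I would proceed through the same six moves as in the proof of Theorem~\ref{soc-positive}. (1) Rewrite $\pi = \soc(L(\Delta_\rho[x_1,y_1],\dots,\Delta_\rho[x_r,y_r])\rtimes\pi_A)$ with no segment equal to $[x-1,-x]$. (2) By \cite[Theorem~5.11]{LM}, compute the highest right $\rho|\cdot|^{-x}$-derivative of $L(\Delta_\rho[x_1,y_1],\dots,\Delta_\rho[x_r,y_r])$, obtaining $s = |B_{\rho|\cdot|^x}^c|$ and the shortened exponents $y_i''$; Jantzen's Claim~1 gives $\pi = \soc(L(\Delta_\rho[x_1,y_1''],\dots)\rtimes\pi_1)$ with $\pi_1 = \soc((\rho|\cdot|^{-x})^s\rtimes\pi_A)$. (3) Apply Proposition~\ref{bad-special} to $\pi_1$: its highest $\rho|\cdot|^x$-derivative $\pi_2$ has order $k_1 = m+\kappa+\max\{s-m'-\kappa,0\}$, and Jantzen's Claim~2 gives $\pi = \soc(L(\Delta_\rho[x_1,y_1''],\dots,(\rho|\cdot|^x)^{k_1})\rtimes\pi_2)$. (4) Apply \cite[Theorem~5.11]{LM} to the highest left $\rho|\cdot|^x$-derivative of $L(\Delta_\rho[x_1,y_1''],\dots,(\rho|\cdot|^x)^{k_1})$ after enlarging $A_{\rho|\cdot|^x}$ exactly as in step~(4) of the proof of Theorem~\ref{soc-positive}; this produces $k_2 = \min\{k_1, |A_{\rho|\cdot|^{x-1}}^c|\}$, hence $k = |A_{\rho|\cdot|^x}^c| + k_1 - k_2$, which one checks equals the asserted value. (5) Use Jantzen's Claim~3, then identify the iterated socle $S_{\rho|\cdot|^x}^{(k_2)}(\pi_2)$ by the socle companion of Proposition~\ref{bad-special} (obtained by the same argument, or by inverting the derivative formula using the characterization of $S_{\rho|\cdot|^x}^{(1)}$ below): $S_{\rho|\cdot|^x}^{(k_2)}(\pi_2) = \soc((\rho|\cdot|^{-x})^{s'}\rtimes\pi_A')$ with $s' = \min\{s,m'+\kappa\}+\max\{k_2-m-\kappa,0\}$, where the dependence on whether $v = |A_{\rho|\cdot|^{x-1}}^c|$ is even or odd is precisely the $\kappa$-dichotomy of Proposition~\ref{bad-special} applied to $\pi_2$; this is what forces $\pi_A'$ into its two-case shape (and $\pi_A'=\pi_A$ when $m+\kappa\le|A_{\rho|\cdot|^{x-1}}^c|$). (6) Finally apply \cite[Theorem~5.11]{LM} to $\soc(L(\Delta_\rho[x_1',y_1''],\dots)\times(\rho|\cdot|^{-x})^{s'})$ to obtain the exponents $y_i'$ (the unabsorbed $s-s'$ copies accounting for the $i_j$ with $j > m'+\kappa+\max\{|A_{\rho|\cdot|^{x-1}}^c|-m-\kappa,0\}$), and invoke Jantzen's Claim~4 to read off the Langlands data of $D_{\rho|\cdot|^x}^{(k)}(\pi)$.

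The socle formulas (a)--(d) are then deduced, exactly as in Theorem~\ref{soc-positive}, from the characterization of $S_{\rho|\cdot|^x}^{(1)}(\pi)$ as the unique irreducible representation with $D_{\rho|\cdot|^x}^{(k+1)}(S_{\rho|\cdot|^x}^{(1)}(\pi)) = D_{\rho|\cdot|^x}^{(k)}(\pi)$: one feeds each of the four candidate representations into the derivative formula just proved and checks that its highest $\rho|\cdot|^x$-derivative is $D_{\rho|\cdot|^x}^{(k)}(\pi)$ with the derivative order raised by exactly one. This amounts to verifying that in each of the four regimes the best matching functions and the numbers $m, m', \kappa$ behave as expected; it is routine but must be organised by cases.

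The main obstacle, as in Proposition~\ref{bad-special} itself, is keeping the parity parameter $\kappa$ and the pairing behaviour of the $\Delta_\rho[x-1,-x]$-segments under control through two nested applications of Proposition~\ref{bad-special}, together with the many commutation relations among the ladder representations $L_0, L_1, L_2$ and the segments $\Delta_\rho[x,-x]$, $\Delta_\rho[x-1,-x]$, $\Delta_\rho[x,-(x-1)]$, $\Delta_\rho[x-1,-(x-1)]$, which rest on \cite[Proposition~5.15~(3), Corollary~6.2]{LM}, \cite[Theorem~1.2]{LT} and \cite[Th{\'e}or{\`e}me~(i)]{MW}. Several of these commutations are stated for $x \geq 1$; the saving grace is that when $x = 1/2$ one is forced to take $s = 0$, so $B_{\rho|\cdot|^x}^c = \emptyset$ and the problematic terms vanish, which lets the two cases be handled uniformly. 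Checking carefully that nothing is lost at the endpoint $x = 1/2$ --- and that the two-case description of $\pi_A'$ degenerates correctly there --- is the delicate point of the argument.
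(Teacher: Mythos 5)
Your proposal is correct and is essentially the paper's own argument: the published proof of Theorem \ref{bad-soc-positive} is a one-sentence reference to rerunning the six steps of the proof of Theorem \ref{soc-positive} via Jantzen's algorithm and \cite[Theorem 5.11]{LM}, with Proposition \ref{bad-special} substituted for Theorem \ref{der-special}, which is exactly what you carry out in detail. Your bookkeeping of the parity parameter $\kappa$, of $k_1=m+\kappa+\max\{s-m'-\kappa,0\}$ and $k_2=\min\{k_1,|A_{\rho|\cdot|^{x-1}}^c|\}$, and of the even/odd dichotomy in $v=|A_{\rho|\cdot|^{x-1}}^c|$ producing the two-case shape of $\pi_A'$ all match the statement, as does the derivation of (a)--(d) from the characterization $D_{\rho|\cdot|^x}^{(k+1)}(S_{\rho|\cdot|^x}^{(1)}(\pi))=D_{\rho|\cdot|^x}^{(k)}(\pi)$.
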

\begin{proof}
By a similar argument to Theorem \ref{soc-positive}, 
we obtain the assertions by applying Jantzen's algorithm \cite[\S 3.3]{J-dual} 
together with \cite[Theorem 5.11]{LM} and Proposition \ref{bad-special}. 
\end{proof}

As a consequence, one can obtain an analogous criterion to Corollary \ref{irr_cusp}
for the irreducibility of $\rho|\cdot|^x \rtimes \pi$. 
We leave the details to the reader. 

\section{Explicit formulas for derivatives and socles: A non-cuspidal case}\label{sec01}
Fix $\rho \in \Cusp^\GL$ self-dual. 
In this section, we consider $\pi \in \Irr(G_n)$ of good or $\rho$-bad parity satisfying that:
\begin{itemize}
\item[(a)]
$\pi$ is $\rho|\cdot|^1$-reduced; and 
\item[(b)]
$\pi$ is $\rho|\cdot|^z$-reduced for all $z < 0$. 
\end{itemize}
\par

Recall that if an irreducible representation $\pi$ is $\rho|\cdot|^1$-reduced, 
Proposition \ref{010-1} says that $Z_\rho[0,1]^k \rtimes \pi$ is SI. 
In this subsection, we determine the highest $[0,1]$-derivative $\pi' = D_{[0,1]}^{(k)}(\pi)$ of $\pi$, 
and we show how to recover the Langlands data of $\pi$ in terms of those of $\pi'$.

\subsection{A reduction step}
In this paragraph, we reduce the computation to a particular case that will be treated at the end of the section.
\par

We write 
$\pi = L(\Delta_{\rho_1}[x_1,y_1], \dots, \Delta_{\rho_r}[x_r,y_r], \Delta_\rho[0,-1]^t; \pi(\phi, \eta))$
as a Langlands subrepresentation, 
where 
\begin{itemize}
\item
$\phi \in \Phi_\temp(G)$; 
\item
$t \geq 0$; 
\item
$x_1+y_1 \leq \dots \leq x_r+y_r < 0$; 
\item
$\Delta_{\rho_i}[x_i,y_i] \not\cong \Delta_\rho[0,-1]$ for $i=1, \dots,r$. 
\end{itemize}
We know by the assumption (b) that $x_i \geq 0$ if $\rho_i \cong \rho$. 
Also, by the last condition above, we have $y_i \not= -1$ if $\rho_i \cong \rho$. 
Set $\pi_A \coloneqq L(\Delta_\rho[0,-1]^t; \pi(\phi, \eta))$. 
\par

To rephrase the assumption (a), we recall Jantzen's algorithm (\cite[\S 3.3]{J-dual}). 
Let $\pi_A' \coloneqq D_{\rho|\cdot|^1}^{(l)}(\pi_A)$ be the highest $\rho|\cdot|^1$-derivative of $\pi_A$. 
It can be computed thanks to Theorem \ref{der-special} and Proposition \ref{bad-special}. 
Then Jantzen's Claim 2 in \cite[\S 3.3]{J-dual} says that
\[
\pi \hookrightarrow 
L(\Delta_{\rho_1}[x_1,y_1], \dots, \Delta_{\rho_r}[x_r,y_r], (\rho|\cdot|^1)^{l}) 
\rtimes \pi_A'. 
\]
According to his algorithm, $\pi$ is $\rho|\cdot|^1$-reduced if and only if 
$L(\Delta_{\rho_1}[x_1,y_1], \dots, \Delta_{\rho_r}[x_r,y_r], (\rho|\cdot|^1)^{l})$ is left $\rho|\cdot|^1$-reduced. 
For $i = r+1, \dots, r+l$, we set $\Delta_{\rho_i}[x_i,y_i] = \rho|\cdot|^{1}$. 
Define
\begin{align*}
A_\rho &\coloneqq \{i \in \{1, \dots, r+l\} \;|\; \rho_i \cong \rho,\; x_i = 0\}, \\
A_{\rho|\cdot|^1} &\coloneqq \{i \in \{1, \dots, r+l\} \;|\; \rho_i \cong \rho,\; x_i = 1\}. 
\end{align*}
As in \S \ref{negative}, 
we regard these sets as totally ordered sets, 
and we define a traversable relation $\rel$ between $A_{\rho|\cdot|^1}$ and $A_\rho$.
Let $f \colon A_{\rho}^0 \rightarrow A_{\rho|\cdot|^1}^0$ be the best matching function. 
Then by \cite[Theorem 5.11]{LM}, 
$L(\Delta_{\rho_1}[x_1,y_1], \dots, \Delta_{\rho_r}[x_r,y_r], (\rho|\cdot|^1)^{l})$ is left $\rho|\cdot|^1$-reduced
if and only if $A_{\rho|\cdot|^{1}}^c = \emptyset$. 
Let $D_{[0,1]}^{(k_A)}(\pi_A')$ be the highest $[0,1]$-derivative of $\pi_A'$. 
We will explicitly compute it in Propositions \ref{bad-der-01} and \ref{der-01-special} below. 

\begin{thm}\label{der-01}
Let $\pi\in\Irr(G_n)$ of good or $\rho$-bad parity satisfying the assumptions (a) and (b). 
We use the above notation.
Then the highest $[0,1]$-derivative $D_{[0,1]}^{(k)}(\pi)$ is the unique irreducible subrepresentation of 
\[
L(\Delta_{\rho_1}[x'_1,y_1], \dots, \Delta_{\rho_r}[x'_r,y_r]) \rtimes D_{[0,1]}^{(k_A)}(\pi_A'), 
\]
where 
\[
x'_i = \left\{
\begin{aligned}
&-1	\iif i \in A_\rho^0, \\
&0	\iif i \in A_{\rho|\cdot|^1}, \\
&x_i	\other.
\end{aligned}
\right. 
\]
In particular, $k = k_A + r_1$ with $r_1 \coloneqq |A_{\rho|\cdot|^1}| = |A_\rho^0|$.
\end{thm}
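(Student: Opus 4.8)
The plan is to run Jantzen's algorithm \cite[\S 3.3]{J-dual}, in complete parallel with the proofs of Theorems \ref{soc-positive} and \ref{bad-soc-positive}; the only new inputs will be the explicit $[0,1]$-derivative of the ``Arthur part'' (Propositions \ref{bad-der-01} and \ref{der-01-special} below) and the combinatorics of the best matching function $f\colon A_\rho^0 \to A_{\rho|\cdot|^1}^0$. I would start from the reduction step, which already gives $\pi \hookrightarrow \tau \rtimes \pi_A'$ with $\tau = L(\Delta_{\rho_1}[x_1,y_1], \dots, \Delta_{\rho_r}[x_r,y_r], (\rho|\cdot|^1)^l)$ and $\pi_A' = D_{\rho|\cdot|^1}^{(l)}(\pi_A)$; here $\pi_A'$ is irreducible because $\rho|\cdot|^1$ is not self-dual, $\tau$ is left $\rho|\cdot|^1$-reduced by assumption (a) (equivalently $A_{\rho|\cdot|^1}^c = \emptyset$), and, being the highest $\rho|\cdot|^1$-derivative, $\pi_A'$ is $\rho|\cdot|^1$-reduced. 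Since $\pi$ is of good or $\rho$-bad parity, so is $\pi_A = L(\Delta_\rho[0,-1]^t; \pi(\phi,\eta))$, hence Propositions \ref{bad-der-01}, \ref{der-01-special} will be applicable to $\pi_A'$.

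Next I would compute $\pi_A'' \coloneqq D_{[0,1]}^{(k_A)}(\pi_A')$ using Propositions \ref{bad-der-01} and \ref{der-01-special}; by Lemma \ref{[01]-red}(3) it is again $\rho|\cdot|^1$-reduced. Then I would extract the remaining copies of $Z_\rho[0,1]$ from the general-linear factor $\tau$: since $\Jac_{(d,d)}(Z_\rho[0,1]) = \rho \boxtimes \rho|\cdot|^1$ and $\tau$ is left $\rho|\cdot|^1$-reduced, the relevant $Z_\rho[0,1]$-extraction from $\tau$ is governed, through \cite[Theorem 5.11]{LM}, by the best matching $f$: a matched pair $(i,f(i))$ is exactly a ladder $L(\Delta_\rho[0,y_i], \Delta_\rho[1,y_{f(i)}])$ from which one copy of $Z_\rho[0,1]$ can be removed, turning $x_i=0$ into $-1$ and $x_{f(i)}=1$ into $0$, while the $l$ added copies of $\rho|\cdot|^1$ all lie in $A_{\rho|\cdot|^1}=A_{\rho|\cdot|^1}^0$ and become the empty segment $\Delta_\rho[0,1]$. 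This produces $L(\Delta_{\rho_1}[x'_1,y_1], \dots, \Delta_{\rho_r}[x'_r,y_r])$ with the $x'_i$ of the statement and accounts for $r_1 = |A_\rho^0| = |A_{\rho|\cdot|^1}|$ further copies of $Z_\rho[0,1]$. Threading these two steps through the intermediate Jantzen ``Claims'', as in items (3)--(6) of the proof of Theorem \ref{soc-positive}, will yield $D_{[0,1]}^{(k)}(\pi) = \soc(L(\Delta_{\rho_1}[x'_1,y_1], \dots, \Delta_{\rho_r}[x'_r,y_r]) \rtimes \pi_A'')$ with $k = k_A + r_1$, and Proposition \ref{010-1} (applicable since $\pi$ is $\rho|\cdot|^1$-reduced) will give its irreducibility.

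The hard part will be the maximality, i.e.\ checking $D_{[0,1]}^{(k+1)}(\pi) = 0$. Assumption (b) should be used to rule out forming a further $Z_\rho[0,1]$ out of a segment ending below $-1$, and assumption (a) together with the $\rho|\cdot|^1$-reducedness of $\pi_A''$ should ensure that no $\rho|\cdot|^1$ survives in the position needed to pair with a leftover $\rho=\rho|\cdot|^0$. A secondary, purely combinatorial difficulty is to verify that at each stage the extracted piece is genuinely $Z_\rho[0,1]$ rather than $\Delta_\rho[1,0]$ — this is precisely the ladder/traversability condition encoded in the relation $\rel$ between $A_\rho$ and $A_{\rho|\cdot|^1}$, inherited from \cite[Theorem 5.11]{LM} — and that the intermediate Jantzen ``Claims'' preserve socle irreducibility throughout, which rests on Proposition \ref{010-1} for the $Z_\rho[0,1]$-steps and on \cite[Theorem 5.11]{LM} for the general-linear steps.
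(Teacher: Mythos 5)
Your skeleton---reduce to $\pi_A'$, compute $D_{[0,1]}^{(k_A)}(\pi_A')$ by Propositions \ref{bad-der-01} and \ref{der-01-special}, and account for the remaining copies of $Z_\rho[0,1]$ through the best matching $f\colon A_\rho^0\to A_{\rho|\cdot|^1}^0$---is the right one, and your combinatorial description of what a matched pair contributes agrees with the statement. But the two load-bearing steps are not supplied, and the tools you cite for them do not apply as stated. Jantzen's algorithm and its intermediate ``Claims'' in \cite[\S 3.3]{J-dual}, as well as \cite[Theorem 5.11]{LM}, are formulated for derivatives with respect to a single supercuspidal $\rho|\cdot|^x$; neither yields a ``$Z_\rho[0,1]$-extraction'' directly, so ``threading through the Jantzen Claims'' for the $[0,1]$-derivative would require first proving $Z_\rho[0,1]$-analogues of those claims, which is essentially the content of the theorem. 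The paper's proof avoids this: its key point is the purely general-linear embedding
\[
L(\Delta_{\rho_1}[x_1,y_1],\dots,\Delta_{\rho_r}[x_r,y_r],(\rho|\cdot|^1)^l)\hookrightarrow Z_\rho[0,1]^{r_1}\times L(\Delta_{\rho_1}[x_1',y_1],\dots,\Delta_{\rho_r}[x_r',y_r]),
\]
proved via \cite[Proposition 5.6]{LM} by checking that the left-hand side is recovered from $L(\Delta_{\rho_1}[x_1',y_1],\dots)$ by taking the highest left $\rho$-derivative, then the socle with $(\rho|\cdot|^1)^{r_1}$, then the socle with $\rho^{r_1+k'}$ --- a two-step $\rho$/$\rho|\cdot|^1$ computation, not a one-step application of \cite[Theorem 5.11]{LM}. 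Combined with the fact that $Z_\rho[0,1]$ commutes past each $\Delta_{\rho_i}[x_i,y_i]$ (possible because $x_i\ge 0$ by assumption (b)), this gives $\pi\hookrightarrow Z_\rho[0,1]^{k_A+r_1}\times L(\Delta_{\rho_1}[x_1',y_1],\dots)\rtimes D_{[0,1]}^{(k_A)}(\pi_A')$.

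The second gap is maximality, which you correctly flag as the hard part but only assert ``should'' follow from (a) and (b). The actual mechanism is to apply Tadi{\'c}'s formula to $L(\Delta_{\rho_1}[x_1',y_1],\dots,\Delta_{\rho_r}[x_r',y_r])\rtimes D_{[0,1]}^{(k_A)}(\pi_A')$, using that the general-linear factor is left $\rho|\cdot|^1$-reduced, right $\rho$-reduced and right $\rho|\cdot|^{-1}$-reduced, while $D_{[0,1]}^{(k_A)}(\pi_A')$ is $Z_\rho[0,1]$-reduced and $\rho|\cdot|^1$-reduced; this forces the whole induced representation to be $Z_\rho[0,1]$-reduced, so $k=k_A+r_1$ is exactly the highest index. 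These reducedness statements are where (a) and (b) actually enter, and they need to be verified, not just invoked. Finally, the identification of $D_{[0,1]}^{(k)}(\pi)$ as the \emph{unique irreducible subrepresentation} of $L(\Delta_{\rho_1}[x_1',y_1],\dots)\rtimes D_{[0,1]}^{(k_A)}(\pi_A')$ comes from this induced representation being a subrepresentation of a standard module, hence SI; your appeal to Proposition \ref{010-1} gives irreducibility of the derivative but not by itself its identification with this particular socle.
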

\begin{proof}
Since $x_i \geq 0$ if $\rho_i \cong \rho$, we see that 
$\Delta_{\rho_i}[x_i,y_i] \times Z_{\rho}[0,1] \cong Z_\rho[0,1] \times \Delta_{\rho_i}[x_i,y_i]$
for all $i = 1, \dots, r+l$. 
Hence 
\begin{align*}
\pi 
&\hookrightarrow 
L(\Delta_{\rho_1}[x_1,y_1], \dots, \Delta_{\rho_r}[x_r,y_r], (\rho|\cdot|^1)^{l}) \rtimes \pi_A'
\\&\hookrightarrow 
L(\Delta_{\rho_1}[x_1,y_1], \dots, \Delta_{\rho_r}[x_r,y_r], (\rho|\cdot|^1)^{l}) 
\times Z_\rho[0,1]^{k_A} \rtimes D_{[0,1]}^{(k_A)}(\pi_A')
\\&\cong 
Z_\rho[0,1]^{k_A} \times 
L(\Delta_{\rho_1}[x_1,y_1], \dots, \Delta_{\rho_r}[x_r,y_r], (\rho|\cdot|^1)^{l}) 
\rtimes D_{[0,1]}^{(k_A)}(\pi_A'). 
\end{align*}
\par

We claim that
\[
L(\Delta_{\rho_1}[x_1,y_1], \dots, \Delta_{\rho_r}[x_r,y_r], (\rho|\cdot|^1)^{l}) 
\hookrightarrow 
Z_\rho[0,1]^{r_1} \times L(\Delta_{\rho_1}[x'_1,y_1], \dots, \Delta_{\rho_r}[x'_r,y_r]). 
\]
To see this, by \cite[Proposition 5.6]{LM}, 
it is enough to show that
\begin{align*}
&L(\Delta_{\rho_1}[x_1,y_1], \dots, \Delta_{\rho_r}[x_r,y_r], (\rho|\cdot|^1)^{l}) 
\\&= 
\soc\left( 
\rho^{r_1+k'} \times
\soc \left(
(\rho|\cdot|^1)^{r_1} \times 
L_\rho^{(k')} (L(\Delta_{\rho_1}[x'_1,y_1], \dots, \Delta_{\rho_r}[x'_r,y_r]) ) 
\right)
\right),
\end{align*}
where $L_\rho^{(k')} (L(\Delta_{\rho_1}[x'_1,y_1], \dots, \Delta_{\rho_r}[x'_r,y_r]) )$
is the highest left $\rho$-derivative. 
By our assumptions and by the definition of $x_i'$, 
we see that $k' = r_0-r_1$ with $r_0 = |A_\rho|$ and that 
\[
L_\rho^{(r_0-r_1)} (L(\Delta_{\rho_1}[x'_1,y_1], \dots, \Delta_{\rho_r}[x'_r,y_r]) ) 
= 
L(\Delta_{\rho_1}[x_1^{(1)},y_1], \dots, \Delta_{\rho_r}[x_r^{(1)},y_r])
\]
with 
\begin{align*}
x_i^{(1)} &= \left\{
\begin{aligned}
&-1 \iif i \in A_{\rho}^c, \\
&x_i' \other.
\end{aligned}
\right. 
\\&= 
\left\{
\begin{aligned}
&-1	\iif i \in A_\rho, \\
&0	\iif i \in A_{\rho|\cdot|^1}, \\
&x_i	\other.
\end{aligned}
\right. 
\end{align*}
Since $x_i^{(1)} \not= 1$ if $\rho_i \cong \rho$, 
we have
\begin{align*}
&\soc \left(
(\rho|\cdot|^1)^{r_1} \times 
L_\rho^{(r_0-r_1)} (L(\Delta_{\rho_1}[x'_1,y_1], \dots, \Delta_{\rho_r}[x'_r,y_r]) ) 
\right)
\\&= 
L(\Delta_{\rho_1}[x_1^{(2)},y_1], \dots, \Delta_{\rho_{r+l}}[x_{r+l}^{(2)},y_{r+l}])
\end{align*}
with 
\begin{align*}
x_i^{(2)} &= \left\{
\begin{aligned}
&-1	\iif i \in A_\rho, \\
&1	\iif i \in A_{\rho|\cdot|^1}, \\
&x_i	\other.
\end{aligned}
\right. 
\end{align*}
In particular, we note that $\Delta_{\rho_i}[x_i^{(2)},y_i] \cong \rho|\cdot|^1$ for $i > r$.
Since $x_i^{(2)} \not= 0$ if $\rho_i \cong \rho$, we have
\[
\soc(\rho^{r_0} \rtimes L(\Delta_{\rho_1}[x_1^{(2)},y_1], \dots, \Delta_{\rho_{r+l}}[x_{r+l}^{(2)},y_{r+l}]))
= 
L(\Delta_{\rho_1}[x_1,y_1], \dots, \Delta_{\rho_{r+l}}[x_{r+l},y_{r+l}]). 
\]
Hence we obtain the claim. 
\par

By the claim, we have  
\[
\pi \hookrightarrow Z_{\rho}[0,1]^{k_A+r_1} 
\times 
L(\Delta_{\rho_1}[x'_1,y_1], \dots, \Delta_{\rho_r}[x'_r,y_r])
\rtimes D_{[0,1]}^{(k_A)}(\pi_A'). 
\]
Moreover, by Tadi{\'c}'s formula (Proposition \ref{eq:Tadic}) together with the facts that
\begin{itemize}
\item
$L(\Delta_{\rho_1}[x'_1,y_1], \dots, \Delta_{\rho_r}[x'_r,y_r])$ 
is left $\rho|\cdot|^1$-reduced; 
\item
$L(\Delta_{\rho_1}[x'_1,y_1], \dots, \Delta_{\rho_r}[x'_r,y_r])$ 
is right $\rho$-reduced and right $\rho|\cdot|^{-1}$-reduced; 
\item
$D_{[0,1]}^{(k_A)}(\pi_A')$ is $Z_\rho[0,1]$-reduced and $\rho|\cdot|^1$-reduced, 
\end{itemize}
we see that $L(\Delta_{\rho_1}[x'_1,y_1], \dots, \Delta_{\rho_r}[x'_r,y_r]) \rtimes D_{[0,1]}^{(k_A)}(\pi_A')$ 
is $Z_\rho[0,1]$-reduced and $\rho|\cdot|^1$-reduced. 
Therefore, $D_{[0,1]}^{(k_A+r_1)}(\pi)$ is the highest $[0,1]$-derivative, and 
\[
D_{[0,1]}^{(k_A+r_1)}(\pi) \hookrightarrow 
L(\Delta_{\rho_1}[x'_1,y_1], \dots, \Delta_{\rho_r}[x'_r,y_r])
\rtimes D_{[0,1]}^{(k_A)}(\pi_A'). 
\]
Since this induced representation in the right hand side is a subrepresentation of a standard module, it is SI. 
In particular, $D_{[0,1]}^{(k_A+r_1)}(\pi)$ is 
the unique irreducible subrepresentation of this induced representation. 
\end{proof}

We give now the converse of Theorem \ref{der-01}.
Namely, when $\pi$ is of good or $\rho$-bad parity satisfying the assumptions (a) and (b), 
we will recover the Langlands data of $\pi$ from those of $D_{[0,1]}^{(k)}(\pi)$. 
\par

Write 
$D_{[0,1]}^{(k)}(\pi) = 
L(\Delta_{\rho_1}[x'_1,y_1], \dots, \Delta_{\rho_r}[x'_r,y_r], (\rho|\cdot|^{-1})^s, \Delta_\rho[0,-1]^t; \pi(\phi', \eta'))
$
as a Langlands subrepresentation, where 
\begin{itemize}
\item
$\phi' \in \Phi_\temp(G)$; 
\item
$s, t \geq 0$;
\item
$x'_1+y_1 \leq \dots \leq x'_r+y_r < 0$; 
\item
$\Delta_{\rho_i}[x'_i,y_i] \not\cong \rho|\cdot|^{-1}, \Delta_\rho[0,-1]$ for $i=1, \dots,r$. 
\end{itemize}
Set $\pi''_A \coloneqq L((\rho|\cdot|^{-1})^s, \Delta_\rho[0,-1]^t; \pi(\phi', \eta'))$. 
Define
\begin{align*}
B_{\rho|\cdot|^{-1}} &\coloneqq \{i \in \{1, \dots, r\} \;|\; \rho_i \cong \rho,\; x'_i = -1\}, \\
B_{\rho} &\coloneqq \{i \in \{1, \dots, r\} \;|\; \rho_i \cong \rho,\; x'_i = 0\}
\end{align*}
with the best matching function $f' \colon B_{\rho|\cdot|^{-1}}^0 \rightarrow B_\rho^0$. 
By Theorem \ref{der-01}, we see that $x'_i \not= 1$ if $\rho_i \cong \rho$.
Also, if we set $r_1 \coloneqq |B_{\rho|\cdot|^{-1}}|$, 
$k_A \coloneqq k-r_1$ and $l \coloneqq r_1 - |B_\rho^0|$, 
then we have $k_A \geq 0$ and $l \geq 0$. 

\begin{cor}\label{soc-01}
Let $\pi\in\Irr(G_n)$ of good or $\rho$-bad parity satisfying the assumptions (a) and (b). 
Then $\pi$ is the unique irreducible subrepresentation of 
\[
L(\Delta_{\rho_1}[x_1, y_1], \dots, \Delta_{\rho_r}[x_r, y_r]) \rtimes \pi_A,
\]
where
\[
x_i = \left\{
\begin{aligned}
&0 \iif i \in B_{\rho|\cdot|^{-1}}, \\
&1 \iif i \in B_\rho^0, \\
&x_i' \other, 
\end{aligned}
\right. 
\]
and 
\[
\pi_A \coloneqq S_{\rho|\cdot|^1}^{(l)} \circ S_{[0,1]}^{(k_A)}(\pi_A'').
\]
\end{cor}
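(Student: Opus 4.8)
The plan is to read Corollary \ref{soc-01} as the exact inverse of Theorem \ref{der-01}. By assumption (a) the representation $\pi$ is $\rho|\cdot|^1$-reduced, so Proposition \ref{010-1} shows that $Z_\rho[0,1]^k \rtimes \sigma$ is SI, where I abbreviate $\sigma := D_{[0,1]}^{(k)}(\pi)$; since moreover $\sigma$ is the \emph{highest} $[0,1]$-derivative of $\pi$ by Theorem \ref{der-01}, we get $\pi = S_{[0,1]}^{(k)}(\sigma) = \soc(Z_\rho[0,1]^k \rtimes \sigma)$. Everything therefore reduces to computing this socle from the Langlands data of $\sigma$ displayed just before the corollary, and the claim is that the recipe for $\sigma$ in Theorem \ref{der-01} is invertible and the inverse is exactly the one stated.

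To carry this out I would run the chain of embeddings in the proof of Theorem \ref{der-01} backwards. Writing $\sigma = \soc(L(\Delta_{\rho_1}[x'_1,y_1], \dots, \Delta_{\rho_r}[x'_r,y_r]) \rtimes \pi''_A)$, we obtain $\pi \hookrightarrow Z_\rho[0,1]^k \times L(\Delta_{\rho_1}[x'_1,y_1], \dots, \Delta_{\rho_r}[x'_r,y_r]) \rtimes \pi''_A$. The first task is to commute $r_1 = |B_{\rho|\cdot|^{-1}}|$ of the $Z_\rho[0,1]$-factors past the ladder-type representation $L(\Delta_{\rho_1}[x'_1,y_1], \dots)$: using \cite[Proposition 5.6]{LM} together with the best matching function $f' \colon B_{\rho|\cdot|^{-1}}^0 \to B_\rho^0$, exactly as in the proof of Theorem \ref{der-01}, this raises the exponents to the $x_i$ of the statement ($x'_i=-1\mapsto x_i=0$ on $B_{\rho|\cdot|^{-1}}$, $x'_i=0\mapsto x_i=1$ on $B_\rho^0$) and leaves $k_A = k-r_1$ factors $Z_\rho[0,1]$, together with $l$ surviving factors $\rho|\cdot|^1$ (matching the $(\rho|\cdot|^1)^l$ of Jantzen's algorithm in the proof of Theorem \ref{der-01}), standing next to $\pi''_A$. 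Absorbing these remaining factors via the SI properties of Propositions \ref{SI} and \ref{010-1} and the fact that $\pi''_A$ is $\rho|\cdot|^1$- and $Z_\rho[0,1]$-reduced by construction then gives $\pi \hookrightarrow L(\Delta_{\rho_1}[x_1,y_1], \dots, \Delta_{\rho_r}[x_r,y_r]) \rtimes \pi_A$ with $\pi_A = S_{\rho|\cdot|^1}^{(l)}\circ S_{[0,1]}^{(k_A)}(\pi''_A)$; since this induced module is a subrepresentation of a standard module it is SI, so $\pi$ is indeed its unique irreducible subrepresentation.

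An equivalent and perhaps cleaner route is a direct consistency check: let $\pi^\flat$ be the unique irreducible subrepresentation of $L(\Delta_{\rho_1}[x_1,y_1], \dots, \Delta_{\rho_r}[x_r,y_r]) \rtimes \pi_A$, verify from the shape of the $x_i$ and the definition of $\pi_A$ that $\pi^\flat$ is of good or $\rho$-bad parity and satisfies assumptions (a) and (b), and then apply Theorem \ref{der-01} to $\pi^\flat$; one checks that $D_{[0,1]}^{(k)}(\pi^\flat) = \sigma$, whence $\pi^\flat = S_{[0,1]}^{(k)}(\sigma) = \pi$. Either way, the point where care is needed, and what I expect to be the main obstacle, is the combinatorial bookkeeping: one must show that the best matching function $f \colon A_\rho^0 \to A_{\rho|\cdot|^1}^0$ attached to $\pi^\flat$ in Theorem \ref{der-01} is inverse, on the relevant index sets, to $f' \colon B_{\rho|\cdot|^{-1}}^0 \to B_\rho^0$ attached to $\sigma$ — equivalently, that raising the exponents and then taking the highest left $\rho|\cdot|^1$-derivative undoes the lowering — and that the analogous cancellation holds for the $\pi_A$-part, namely $D_{[0,1]}^{(k_A)}\bigl(D_{\rho|\cdot|^1}^{(l)}(\pi_A)\bigr) = \pi''_A$. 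The first is a general fact about traversable relations, provable along the lines of \cite[\S5.3]{LM} and \cite[Theorem 5.11]{LM}; the second follows from Theorem \ref{der-special}, Proposition \ref{bad-special}, Lemma \ref{[01]-red} and the uniqueness built into Propositions \ref{SI} and \ref{010-1}.
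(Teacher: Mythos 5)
Your proposal is correct and matches the paper's approach: the paper's entire proof is ``This follows from Theorem \ref{der-01}'', i.e.\ precisely your second route — the recipe is the formal inverse of the derivative formula, and the SI property of $Z_\rho[0,1]^k \rtimes D_{[0,1]}^{(k)}(\pi)$ from Proposition \ref{010-1} guarantees uniqueness of the inversion. Your identification of the combinatorial matching of the $A$- and $B$-sets as the only point needing verification is accurate, and the references you cite for it are the right ones.
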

\begin{proof}
This follows from Theorem \ref{der-01}.
\end{proof}

\subsection{The representation $\pi_A$ in the bad parity case}\label{sub:bad01}
We keep notation as in the previous paragraph. 
We are left to give an explicit formula for the highest $[0,1]$-derivative of $\pi'_A$ 
and to show how to recover the Langlands data of $\pi'_A$ from those of its highest $[0,1]$-derivative.
\par

We treat first the bad parity case, which is much simpler. 
Recall that $\pi_A = L(\Delta_\rho[0,-1]^t; \pi(\phi, \eta))$ with $\phi \in \Phi_\temp(G)$. 
Let $\pi_A' \coloneqq D_{\rho|\cdot|^1}^{(l)}(\pi_A)$ to be the highest $\rho|\cdot|^1$-derivative of $\pi_A$. 
By Proposition \ref{bad-special}, 
$\pi'_A = L(\Delta_\rho[0,-1]^{t-\kappa}; \pi(\phi', \eta'))$ 
with $\kappa \in \{0,1\}$ with $t \equiv \kappa \bmod 2$
and $\phi' \in \Phi_\temp(G)$ which does not contain $\rho \boxtimes S_3$.
In particular, $t-\kappa$ is even.
Hence what we have to prove is the following.

\begin{prop}\label{bad-der-01}
Let $\pi = L(\Delta_\rho[0,-1]^{t}; \pi(\phi, \eta))$ be of the $\rho$-bad parity 
with $t$ even and 
$\phi \in \Phi_\temp(G)$ such that $\phi \not\supset \rho \boxtimes S_3$. 
Then the highest $[0,1]$-derivative of $\pi$ is 
\[
D_{[0,1]}^{(t)}(\pi) = \pi(\phi, \eta). 
\]
\end{prop}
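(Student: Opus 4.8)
The plan is to first rewrite $\pi$ in a more convenient form, and then to read the highest $[0,1]$-derivative off Tadi\'c's formula.

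\emph{Reduction step.} Since $\phi\not\supset\rho\boxtimes S_3$ we have $m_\phi(\rho\boxtimes S_3)=0$, so I would apply the argument in the proof of Proposition~\ref{bad-special} with $x=1$ and $s=0$: the standard module $\Delta_\rho[0,-1]^t\rtimes\pi(\phi,\eta)$ is SI and, because $t$ is even, its socle equals the irreducible unitary induced representation $L_0^{t/2}\rtimes\pi(\phi,\eta)$, where $L_0\coloneqq L(\Delta_\rho[0,-1],\Delta_\rho[1,0])$. Thus $\pi=L_0^{t/2}\rtimes\pi(\phi,\eta)$, with $L_0$ self-dual. Applying Proposition~\ref{bad-special} once more (with $x=1$, $s=0$, $\kappa=0$, $m=0$) will give $D_{\rho|\cdot|^1}^{(1)}(\pi)=0$, so $\pi$ is $\rho|\cdot|^1$-reduced; by Proposition~\ref{010-1} the highest $[0,1]$-derivative of $\pi$ is then irreducible and $Z_\rho[0,1]^r\rtimes\pi$ is SI.

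\emph{Lower bound.} By Tadi\'c's formula (Proposition~\ref{eq:Tadic}) one has $\mu^\ast(\pi)=M^\ast(L_0)^{t/2}\rtimes\mu^\ast(\pi(\phi,\eta))$. I would carry out a short computation with the co-multiplication of the ladder $L_0$ — expanding $m^\ast(\Delta_\rho[0,-1]\times\Delta_\rho[1,0])-m^\ast(\Delta_\rho[1,-1]\times\rho)$, or quoting \cite[Theorem~2.1]{KL} — to see that $Z_\rho[0,1]\boxtimes Z_\rho[-1,0]$ occurs in $\left[\Jac_{(2d,2d)}(L_0)\right]$. Since $Z_\rho[-1,0]^\vee=Z_\rho[0,1]$, this forces $Z_\rho[0,1]^2\otimes\1\le M^\ast(L_0)$, hence $Z_\rho[0,1]^t\otimes\1\le M^\ast(L_0)^{t/2}$, and so $Z_\rho[0,1]^t\boxtimes\pi(\phi,\eta)\le\mu^\ast(\pi)$. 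In particular $D_{[0,1]}^{(t)}(\pi)\neq0$ and $\pi(\phi,\eta)\le D_{[0,1]}^{(t)}(\pi)$.

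\emph{The derivative is the highest one.} Next I would show $D_{[0,1]}^{(t+1)}(\pi)=0$. If $Z_\rho[0,1]^{t+1}\boxtimes\pi'$ occurred in $\mu^\ast(\pi)$, then $Z_\rho[0,1]^{t+1}$ would be a subquotient of $A\times\tau$, with $A$ the first tensor factor of a term of $M^\ast(L_0)^{t/2}$ and $\tau$ the first tensor factor of a term of $\mu^\ast(\pi(\phi,\eta))$; comparing supercuspidal supports, $A\times\tau$ is supported on $\{\rho,\rho|\cdot|^1\}$. As $\pi(\phi,\eta)$ is tempered, Casselman's criterion forces $\tau$ to contain no $\rho|\cdot|^1$, so all $t+1$ copies of $\rho|\cdot|^1$ must lie in $A$. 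But the first tensor factor of any term of $M^\ast(L_0)$ is of the form $(\tau'')^\vee\times\tau'$ with $\tau''$ a right and $\tau'$ a left Jacquet piece of $L_0$, and $\scusp(L_0)$ contains exactly one $\rho|\cdot|^1$ and one $\rho|\cdot|^{-1}$; hence it has at most two copies of $\rho|\cdot|^1$, so $A$ has at most $2\cdot(t/2)=t<t+1$ of them, a contradiction. Thus $t$ is the highest index; combining with the reduction and lower-bound steps — $D_{[0,1]}^{(t)}(\pi)$ is irreducible and contains $\pi(\phi,\eta)$ — I would conclude $D_{[0,1]}^{(t)}(\pi)=\pi(\phi,\eta)$.

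\emph{Main obstacle.} The hard part will be the lower bound: one must genuinely verify that $Z_\rho[0,1]\boxtimes Z_\rho[-1,0]$ occurs in $\left[\Jac_{(2d,2d)}(L_0)\right]$ (equivalently that $L_0\hookrightarrow Z_\rho[0,1]\times Z_\rho[-1,0]$) and does not merely appear with coefficient zero after cancellation in the Grothendieck group; this forces one through the Jacquet-module bookkeeping for the ladder $L_0$. An alternative route for the ``highest'' step is Lemma~\ref{[01]-red}(2) — compute the highest $\rho$-derivative $\pi_0$ of $\pi$ and check that its highest $\rho|\cdot|^1$-derivative has index $t$ — but this requires a comparable computation.
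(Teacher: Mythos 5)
Your route is genuinely different from the paper's and is mostly sound, but one step is misjustified. In the part showing $D_{[0,1]}^{(t+1)}(\pi)=0$, you write that ``as $\pi(\phi,\eta)$ is tempered, Casselman's criterion forces $\tau$ to contain no $\rho|\cdot|^1$.'' Casselman's criterion only gives non-negativity of the partial sums of the exponents of the GL-part of a Jacquet module of a tempered representation; since you have already constrained $\scusp(\tau)\subset\{\rho,\rho|\cdot|^1\}$, all exponents lie in $\{0,1\}$ and the criterion is vacuously satisfied — it rules out nothing. The conclusion you need is nevertheless true, but for a different reason: in the bad parity case $\pi(\phi,\eta)=\bigl(\bigtimes_i \Delta_\rho[x_i,-x_i]^{m_i/2}\bigr)\rtimes\pi(\phi_\sigma,\eta_\sigma)$ with all $x_i\neq 1$ (this is where $\phi\not\supset\rho\boxtimes S_3$ enters), and an inspection of $M^\ast(\Delta_\rho[x_i,-x_i])$ shows that any first tensor factor whose support meets $\rho|\cdot|^{1}$ must also contain $\rho|\cdot|^{2}$, which is excluded by $\scusp(\tau)\subset\{\rho,\rho|\cdot|^1\}$. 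You should replace the appeal to Casselman by this support argument. The other incomplete point is the one you flag yourself: the occurrence of $Z_\rho[0,1]\boxtimes Z_\rho[-1,0]$ in $\Jac_{(2d,2d)}(L_0)$. This is correct and follows directly from \cite[Theorem 2.1]{KL} applied to the ladder $L_0=L(\Delta_\rho[0,-1],\Delta_\rho[1,0])$ (take the decomposition points $(z_1,z_2)=(0,-1)$), so it is a computation, not an obstacle; but as written your proof does not contain it.

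For comparison, the paper's argument avoids all of this bookkeeping: it uses $\pi(\phi,\eta)=\rho^{m/2}\rtimes\pi(\phi-\rho^m,\eta)$ and the bad-parity irreducibility $(\rho|\cdot|^{-1})^t\rtimes\pi_0\cong(\rho|\cdot|^{1})^t\rtimes\pi_0$ to embed
\[
\pi \hookrightarrow \rho^{t+\frac{m}{2}}\times(\rho|\cdot|^{1})^{t}\rtimes\pi(\phi-\rho^m,\eta),
\]
from which Tadi\'c's formula gives at once that $t$ is the highest index and that $D_{[0,1]}^{(t)}(\pi)\hookrightarrow\rho^{m/2}\rtimes\pi(\phi-\rho^m,\eta)=\pi(\phi,\eta)$, which is irreducible. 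Your rewriting $\pi=L_0^{t/2}\rtimes\pi(\phi,\eta)$ (which is correct, and is itself extracted from the proof of Proposition \ref{bad-special}) buys a self-contained statement to apply Tadi\'c's formula to, but at the cost of the two Jacquet-module counts above; the paper's switch from $\rho|\cdot|^{-1}$ to $\rho|\cdot|^{1}$ is the idea your proposal is missing that makes both counts unnecessary.
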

\begin{proof}
Write $m \coloneqq m_\phi(\rho)$, which is even.
Since 
\begin{align*}
\pi 
&\hookrightarrow \rho^{t+\half{m}} \rtimes L((\rho|\cdot|^{-1})^t; \pi(\phi-\rho^m,\eta))
\\&\cong \rho^{t+\half{m}} \times (\rho|\cdot|^{-1})^t \rtimes \pi(\phi-\rho^m,\eta)
\\&\cong \rho^{t+\half{m}} \times (\rho|\cdot|^{1})^t \rtimes \pi(\phi-\rho^m,\eta), 
\end{align*}
we see that $D_{[0,1]}^{(t)}(\pi)$ is the highest $[0,1]$-derivative and 
\[
D_{[0,1]}^{(t)}(\pi) \hookrightarrow \rho^{\half{m}} \rtimes \pi(\phi-\rho^m,\eta) = \pi(\phi, \eta). 
\]
Since the right hand side is irreducible, this inclusion is an isomorphism.
\end{proof}
By this proposition, it is easy to recover $\pi$ from its highest $[0,1]$-derivative. 

\subsection{The representation $\pi_A$  in the good parity case}\label{subpos01}
To finish our algorithm we need to consider the case of $\pi = L(\Delta_\rho[0,-1]^t; \pi(\phi, \eta))$ 
with $\phi \in \Phi_\gp(G)$ and $\eta \in \widehat{\Sc_\phi}$, 
and $\rho$ is self-dual of the same type as $\phi$.
Furthermore we assume that $\pi$ is $\rho|\cdot|^1$-reduced, which is equivalent that 
if $\rho \boxtimes S_3 \subset \phi$, then 
$m_\phi(\rho) > 0$, $m_\phi(\rho \boxtimes S_3) = 1$ and $\eta(\rho)\eta(\rho \boxtimes S_3) \not= (-1)^t$. 
We determine the highest $[0,1]$-derivative of $\pi$. 

\begin{prop}\label{der-01-special}
Let $\pi = L(\Delta_\rho[0,-1]^t; \pi(\phi, \eta))$ with
$\phi \in \Phi_\gp(G)$ and $\eta \in \widehat{\Sc_\phi}$. 
Suppose that $\rho$ is self-dual of the same type as $\phi$, 
and that $\pi$ is $\rho|\cdot|^1$-reduced. 
Write $m \coloneqq m_\phi(\rho)$. 

\begin{enumerate}
\item
If $\rho \boxtimes S_3 \subset \phi$ and $m$ is odd, 
then the highest $[0,1]$-derivative of $\pi$ is 
\[
D_{[0,1]}^{(t)}(\pi) = \left\{
\begin{aligned}
&\pi(\phi, \eta) \iif t \equiv 0 \bmod 2, \\
&L(\rho|\cdot|^{-1}; \pi(\phi+\rho-\rho \boxtimes S_3, \eta)) \iif t \equiv 1 \bmod 2.
\end{aligned}
\right. 
\]

\item
If $\rho \boxtimes S_3 \subset \phi$ and $m$ is even, 
then the highest $[0,1]$-derivative of $\pi$ is 
\[
D_{[0,1]}^{(t+1)}(\pi) = \pi(\phi - \rho \boxtimes (S_1+S_3), \eta_{t+1}).  
\]

\item
If $\rho \boxtimes S_3 \not\subset \phi$ and $m$ is odd, 
then the highest $[0,1]$-derivative of $\pi$ is 
\[
\left\{
\begin{aligned}
D_{[0,1]}^{(0)}(\pi) &= \pi(\phi, \eta) \iif t = 0, \\
D_{[0,1]}^{(t-1)}(\pi) &= L(\rho|\cdot|^{-1}; \pi(\phi + \rho^2, \eta)) \iif t > 0, t \equiv 0 \bmod 2,\\
D_{[0,1]}^{(t-1)}(\pi) &= L(\Delta_\rho[0,-1]; \pi(\phi, \eta)) \iif t > 0, t \equiv 1 \bmod 2.
\end{aligned}
\right. 
\]

\item
If $\rho \boxtimes S_3 \not\subset \phi$ and $m$ is even, 
then the highest $[0,1]$-derivative of $\pi$ is 
\[
D_{[0,1]}^{(t)}(\pi) = \pi(\phi, \eta_t).
\]
\end{enumerate}
Here, in (2) and (4), we set
\[
\eta_t(\rho' \boxtimes S_d) = 
\left\{
\begin{aligned}
&(-1)^t \eta(\rho) \iif \rho' \boxtimes S_d \cong \rho, \\ 
&\eta(\rho' \boxtimes S_d) \other. 
\end{aligned}
\right. 
\]
\end{prop}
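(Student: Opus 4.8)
The plan is to reduce the computation of the highest $Z_\rho[0,1]$-derivative of $\pi$ to that of two ordinary derivatives, and then to pin down the resulting irreducible representation by exhibiting an explicit embedding. First observe that, by Proposition \ref{moe} applied with $x=1$, the representation $\pi=L(\Delta_\rho[0,-1]^t;\pi(\phi,\eta))$ equals $\pi(\psi_0,1,\eta)$ for $\psi_0=\phi\oplus(\rho\boxtimes S_2\boxtimes S_2)^t$ (when $t=0$ this is $\pi(\phi,\eta)$ and the assertion is immediate), so $\pi$ is of Arthur type. Since $\pi$ is $\rho|\cdot|^1$-reduced by hypothesis, Lemma \ref{[01]-red} applies with $\epsilon=1$: writing $D_\rho^{(k_0)}(\pi)=m_0\cdot\pi_0$ for the (isotypic) highest $\rho$-derivative and $\pi_1=D_{\rho|\cdot|^1}^{(k_1)}(\pi_0)$ for the highest $\rho|\cdot|^1$-derivative of $\pi_0$, it gives $k_0\geq k_1$, that $D_{[0,1]}^{(k_1)}(\pi)$ is the highest $[0,1]$-derivative, and that it is $\rho|\cdot|^1$-reduced; by Proposition \ref{010-1} it is then irreducible. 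Hence $k=k_1$, the target $D_{[0,1]}^{(k)}(\pi)$ is an irreducible subquotient of $\rho^{\,k_0-k_1}\rtimes\pi_1$, and it remains to make $k_1$ and this subquotient explicit.

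To do so I would first compute $\pi_0$: the highest $\rho$-derivative of $L(\Delta_\rho[0,-1]^t;\pi(\phi,\eta))$ is obtained by combining the highest left $\rho$-derivative of the Steinberg product $\Delta_\rho[0,-1]^t$ — each segment loses its left endpoint $\rho|\cdot|^0$, by the Jacquet-module formula for Steinberg representations and \cite[Theorem 5.11]{LM} — with the highest $\rho$-derivative of the tempered constituent $\pi(\phi,\eta)$, which by \cite[Theorem 4.2]{At1} is governed by $m=m_\phi(\rho)$. Feeding $\pi_0$ into Theorem \ref{der-special} at $x=1$ (with the parameters $m,m'$ there instantiated as $m_\psi(\rho\boxtimes S_3)$, $m_\psi(\rho\boxtimes S_1)$ of the relevant intermediate $A$-parameter) and again using \cite[Theorem 5.11]{LM} for the $\GL$-side bookkeeping, exactly as in the proof of Theorem \ref{soc-positive}, then yields $\pi_1$ and $k_1$. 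As in the proof of Proposition \ref{bad-special}, two copies of $\Delta_\rho[0,-1]$ recombine with a pair of discrete-series segments from the tempered part into the unitary Speh representation $L(\Delta_\rho[0,-1],\Delta_\rho[1,0])$, which commutes with the relevant standard modules; consequently it is the parities of $t$ and $m$, together with whether $\rho\boxtimes S_3\subset\phi$, that decide how many of the $t$ copies of $\Delta_\rho[0,-1]$ convert into $Z_\rho[0,1]$'s, and whether one further $Z_\rho[0,1]$ can be extracted from the tempered part (case (2)) or one $\Delta_\rho[0,-1]$ remains stuck (case (3)). This produces the four cases and the twisted character $\eta_t$.

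Next I would identify $D_{[0,1]}^{(k)}(\pi)$ with the representation $\Xi$ named in the statement by exhibiting, in each case, an embedding $\pi\hookrightarrow Z_\rho[0,1]^{k}\rtimes\Xi$. Such embeddings are assembled from a chain of standard-module inclusions together with irreducibility of several parabolic inductions (via \cite[Th{\'e}or{\`e}me (i)]{MW}, \cite[Theorem 1.2]{LT} and the commutation of ladders \cite[Corollary 6.2]{LM}), in the spirit of the proof of Proposition \ref{bad-special}; in the much simpler $\rho$-bad analogue this is carried out directly in Proposition \ref{bad-der-01}. Once such an embedding is in place, Frobenius reciprocity yields a surjection $\Jac_{P_{2dk}}(\pi)\twoheadrightarrow Z_\rho[0,1]^{k}\boxtimes\Xi$, so $\Xi$ occurs in the semisimple representation $D_{[0,1]}^{(k)}(\pi)$; since the latter is irreducible and $k$ is the maximal index, $\Xi\cong D_{[0,1]}^{(k)}(\pi)$, exactly as in the proof of Proposition \ref{010-1}.

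The main obstacle is the precise tracking of the component-group character: one must verify that it is unchanged except possibly on the $\rho=\rho\boxtimes S_1$ coordinate, and must determine the exact twist $\eta_t$ in cases (2) and (4). The constraints imposed by the $\rho|\cdot|^1$-reducedness of $\pi$ — namely $m_\phi(\rho\boxtimes S_3)=1$ and $\eta(\rho)\eta(\rho\boxtimes S_3)\neq(-1)^t$ whenever $\rho\boxtimes S_3\subset\phi$ — are exactly what is needed, both to ensure that no further $\rho|\cdot|^1$-derivative can be extracted from $\Xi$ (so that the displayed exponent is genuinely maximal) and to fix the sign of $\eta_t$. A secondary technical point is that $\rho=\rho|\cdot|^0$ is a degenerate exponent not covered by Theorem \ref{der-special}, so the highest $\rho$-derivative of $\pi$ has to be handled separately through the tempered case of \cite[Theorem 4.2]{At1} and the Jacquet modules of Steinberg representations.
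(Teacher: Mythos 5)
Your overall strategy---reduce to knowing that the highest $[0,1]$-derivative is irreducible (Lemma \ref{[01]-red} and Proposition \ref{010-1}), then identify it by exhibiting an embedding $\pi\hookrightarrow Z_\rho[0,1]^{k}\rtimes\Xi$ and checking maximality of $k$ via Tadi\'c's formula---is the same skeleton as the paper's proof, which starts from $\pi \hookrightarrow \rho^{t+u}\times L((\rho|\cdot|^{-1})^{t};\pi(\phi-\rho^{2u},\eta))$ and applies Theorem \ref{soc-positive} with $x=1$. However, there is a genuine gap at exactly the point you flag as ``the main obstacle'' and then dismiss. In case (4) (and similarly in case (3)), after commuting the $Z_\rho[0,1]$'s out one is left with a \emph{semisimple} induction: for $m=2u$ with $u>0$ one has $\rho^{u}\rtimes\pi(\phi-\rho^{2u},\eta)=\pi(\phi,\eta_t)\oplus\pi(\phi,\eta_{t+1})$, and in case (3) one has $\rho^{u+1}\rtimes L(\rho|\cdot|^{-1};\pi(\phi-\rho^{2u},\eta))=L(\rho|\cdot|^{-1};\pi(\phi+\rho^{2},\eta))\oplus L(\Delta_\rho[0,-1];\pi(\phi,\eta))$. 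The whole content of these cases is deciding \emph{which} summand receives $\pi$. Your claim that the $\rho|\cdot|^1$-reducedness constraints ``fix the sign of $\eta_t$'' cannot work in case (4): there $\rho\boxtimes S_3\not\subset\phi$, so the reducedness condition is vacuous and imposes no relation on $\eta$ whatsoever. The paper resolves this by a separate argument: it deforms $\phi$ to a dominant parameter $\phi_>$ containing neither $\rho$ nor $\rho\boxtimes S_3$ (where the answer is unambiguous), and transports the embedding back through the Jacquet functors $J_2\circ J_1$, using $\pi(\phi_>,\eta_>)\hookrightarrow\Delta_\rho[x_1,1]\times\dots\times\Delta_\rho[x_m,1]\rtimes J_1(\pi(\phi_>,\eta_>))$ and the commutation $Z_\rho[0,1]\times\Delta_\rho[x,1]\cong\Delta_\rho[x,1]\times Z_\rho[0,1]$ for $x\geq 1$. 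Nothing in your proposal substitutes for this step. Likewise, in case (3) the choice between the two summands depends on the parity of $t$ and is settled by three separate embedding computations ($t$ odd, $t=2$, $t>2$ even) combined with Tadi\'c's formula; your appeal to ``recombination into Speh representations as in Proposition \ref{bad-special}'' does not produce this dichotomy, since in the good-parity case the relevant inductions are reducible rather than irreducible.

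A secondary inaccuracy: the paper never computes the highest $\rho$-derivative of $\pi$; the embedding $\pi\hookrightarrow\rho^{t+u}\times L((\rho|\cdot|^{-1})^{t};\pi(\phi-\rho^{2u},\eta))$ comes directly from the Langlands/tempered data, and $t+u$ need not be the highest exponent. Your detour through $D_\rho^{(k_0)}$ and Theorem \ref{der-special} is workable for determining $k$, but it does not remove the need for the summand-identification argument above.
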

\begin{proof}
We note that $\pi \hookrightarrow \rho^{t+u} \times L((\rho|\cdot|^{-1})^{t}; \pi(\phi - \rho^{2u}, \eta))$
in all cases, where $m = 2u+1$ or $m = 2u$. 
We will apply Theorem \ref{soc-positive} to $L((\rho|\cdot|^{-1})^{t}; \pi(\phi - \rho^{2u}, \eta))$ and $x=1$ in each case.
\par

We show (1). 
Write $m = 2u+1$. 
By Theorem \ref{soc-positive}, we have
\[
\pi \hookrightarrow 
\rho^{t+u} \times (\rho|\cdot|^{1})^{t} \rtimes 
\left\{
\begin{aligned}
&\pi(\phi-\rho^{2u}, \eta) \iif t \equiv 0 \bmod 2, \\
&L(\rho|\cdot|^{-1}; \pi(\phi - \rho^{2u-1} - \rho \boxtimes S_3, \eta)) \iif t \equiv 1 \bmod 2. 
\end{aligned}
\right. 
\]
Note that $\rho^u \rtimes \pi(\phi-\rho^{2u}, \eta) = \pi(\phi, \eta)$
and $\rho^u \rtimes L(\rho|\cdot|^{-1}; \pi(\phi - \rho^{2u-1} - \rho \boxtimes S_3, \eta)) 
= L(\rho|\cdot|^{-1}; \pi(\phi + \rho - \rho \boxtimes S_3, \eta))$
are both irreducible by \cite[Proposition 2.4.3]{Ar} and M{\oe}glin's construction (see \cite[\S 8]{X2}). 
Hence
\begin{align*}
\pi \hookrightarrow Z_\rho[0,1]^t \rtimes 
\left\{
\begin{aligned}
&\pi(\phi, \eta) \iif t \equiv 0 \bmod 2, \\
&L(\rho|\cdot|^{-1}; \pi(\phi + \rho - \rho \boxtimes S_3, \eta)) \iif t \equiv 1 \bmod 2. 
\end{aligned}
\right. 
\end{align*}
This shows (1). 
\par

We show (2). 
Write $m = 2u$.
Note that $u > 0$ and $\eta(\rho \boxtimes S_3) = (-1)^{t+1}\eta(\rho)$. 
Hence
\[
\pi \hookrightarrow 
\rho^{t+u} \times (\rho|\cdot|^1)^{t+1} \rtimes \pi(\phi-\rho^{2u-1}-\rho \boxtimes S_3, \eta_{t+1}). 
\]
This implies that
\begin{align*}
\pi &\hookrightarrow 
Z_\rho[0,1]^{t+1} \times \rho^{u-1} \rtimes \pi(\phi-\rho^{2u-1}-\rho \boxtimes S_3, \eta_{t+1}) 
\\&= Z_\rho[0,1]^{t+1} \rtimes \pi(\phi-\rho-\rho \boxtimes S_3, \eta_{t+1}).
\end{align*}
This shows (2). 
\par

We show (3). 
When $t=0$, it is clear that $\pi$ is $Z_\rho[0,1]$-reduced (Lemma \ref{[01]-red}). 
Suppose that $t > 0$. 
Write $m = 2u+1$. 
Since 
\[
\pi 
\hookrightarrow 
\rho^{t+u} \times (\rho|\cdot|^1)^{t-1} \rtimes L(\rho|\cdot|^{-1}; \pi(\phi - \rho^{2u}, \eta)), 
\]
we have 
\[
\pi \hookrightarrow Z_\rho[0,1]^{t-1} \times \rho^{u+1} \rtimes L(\rho|\cdot|^{-1}; \pi(\phi - \rho^{2u}, \eta)). 
\]
By \cite[Proposition 2.4.3]{Ar} and M{\oe}glin's construction (see \cite[\S 8]{X2}), we have
\[
\rho^{u+1} \rtimes L(\rho|\cdot|^{-1}; \pi(\phi - \rho^{2u}, \eta))
= 
L(\rho|\cdot|^{-1}; \pi(\phi + \rho^{2}, \eta)) \oplus L(\Delta_\rho[0,-1]; \pi(\phi, \eta)).
\]
In particular, $D_{[0,1]}^{(t-1)}(\pi)$ is the highest $[0,1]$-derivative, 
and is isomorphic to one of the two direct summands in the right hand side. 
Now we note that $L(\Delta_\rho[0,-1], \Delta_\rho[1,0]) \cong \soc(Z_\rho[0,1] \times Z_\rho[-1,0])$. 
When $t$ is odd, by \cite[Proposition 2.4.3]{Ar}, we have 
\begin{align*}
\pi &\hookrightarrow L(\Delta_\rho[0,-1], \Delta_\rho[1,0])^{\half{t-1}} \rtimes L(\Delta_\rho[0,-1]; \pi(\phi, \eta)). 
\end{align*}
Since $L(\Delta_\rho[0,-1]; \pi(\phi, \eta))$ is $\rho|\cdot|^1$-reduced and $Z_\rho[0,1]$-reduced, 
by considering Tadi{\'c}'s formula (Proposition \ref{eq:Tadic}), we see that 
\[
D_{[0,1]}^{(t-1)}\left( 
L(\Delta_\rho[0,-1], \Delta_\rho[1,0])^{\half{t-1}} \rtimes L(\Delta_\rho[0,-1]; \pi(\phi, \eta)) 
\right)
= L(\Delta_\rho[0,-1]; \pi(\phi, \eta)), 
\]
which implies that $D_{[0,1]}^{(t-1)}(\pi) = L(\Delta_\rho[0,-1]; \pi(\phi, \eta))$. 
When $t = 2$, by \cite[Proposition 2.4.3]{Ar}, we have 
\begin{align*}
\pi &\hookrightarrow L(\Delta_\rho[0,-1], \Delta_\rho[1,0]) \rtimes \pi(\phi, \eta)
\\& \cong \soc(Z_\rho[0,1] \times Z_\rho[-1,0]) \rtimes \pi(\phi, \eta)
\\&\hookrightarrow Z_\rho[0,1] \times \rho|\cdot|^{-1} \rtimes \pi(\phi + \rho^2, \eta), 
\end{align*}
which implies that $D_{[0,1]}^{(1)}(\pi) = L(\rho|\cdot|^{-1}; \pi(\phi + \rho^{2}, \eta))$.
When $t > 2$ is even, we have 
\begin{align*}
\pi &\hookrightarrow L(\Delta_\rho[0,-1], \Delta_\rho[1,0])^{\half{t-2}} \rtimes L(\Delta_\rho[0,-1]^2; \pi(\phi, \eta))
\\&\hookrightarrow 
Z_\rho[0,1] \times L(\Delta_\rho[0,-1], \Delta_\rho[1,0])^{\half{t-2}} 
\rtimes L(\rho|\cdot|^{-1}; \pi(\phi + \rho^{2}, \eta)). 
\end{align*}
Since $L(\rho|\cdot|^{-1}; \pi(\phi + \rho^{2}, \eta))$ is $\rho|\cdot|^1$-reduced and $Z_\rho[0,1]$-reduced, 
by considering Tadi{\'c}'s formula (Proposition \ref{eq:Tadic}), we see that 
\[
D_{[0,1]}^{(t-1)}\left( 
Z_\rho[0,1] \times L(\Delta_\rho[0,-1], \Delta_\rho[1,0])^{\half{t-2}} 
\rtimes L(\rho|\cdot|^{-1}; \pi(\phi + \rho^{2}, \eta))
\right)
= L(\rho|\cdot|^{-1}; \pi(\phi + \rho^{2}, \eta)), 
\]
which implies that $D_{[0,1]}^{(t-1)}(\pi) = L(\rho|\cdot|^{-1}; \pi(\phi + \rho^{2}, \eta))$.
We obtain (3). 
\par

We show (4). 
Write $m = 2u$. 
Since 
\[
\pi \hookrightarrow \rho^{t+u} \times (\rho|\cdot|^1)^t \rtimes \pi(\phi- \rho^{2u}, \eta), 
\]
we have
\[
\pi \hookrightarrow Z_\rho[0,1]^{t} \times \rho^{u} \rtimes \pi(\phi - \rho^{2u}, \eta). 
\]
In particular, this shows (4) when $u=0$. 
Hereafter we assume that $u > 0$. 
Then 
\[
\rho^u \rtimes \pi(\phi - \rho^{2u}, \eta) = \pi(\phi, \eta_t) \oplus \pi(\phi, \eta_{t+1}).
\] 
To show $\pi \hookrightarrow Z_\rho[0,1]^{t} \rtimes \pi(\phi, \eta_t)$,
we use an argument inspired by M{\oe}glin's construction of $A$-packets. 
\par

Write $\phi = \rho^{m} \oplus (\oplus_{i=1}^r \rho_i \boxtimes S_{d_i})$
with $d_1 \leq \dots \leq d_r$ and $d_i > 3$ if $\rho_i \cong \rho$.
Choose $\phi_> = (\oplus_{j=1}^{m} \rho \boxtimes S_{2x_j+1}) \oplus (\oplus_{i=1}^r \rho_i \boxtimes S_{d_i'})$ 
such that
\begin{itemize}
\item
$x_j \in \Z$ with $x_j > 1$; 
\item
$d_i' \equiv d_i \bmod 2$ with $d_i' \geq d_i$;
\item
$2x_1+1 < \dots < 2x_{m}+1 < d_1' < \dots < d_r'$. 
\end{itemize}
Define $\eta_> \in \widehat{\Sc_{\phi_>}}$ by 
$\eta_>(\rho \boxtimes S_{2x_j+1}) = (-1)^t\eta(\rho)$ 
and $\eta_>(\rho_i \boxtimes S_{d_i'}) = \eta(\rho_i \boxtimes S_{d_i})$.
Then 
$\pi(\phi,\eta_t) = J_2 \circ J_1(\pi(\phi_>, \eta_>))$
with 
\begin{align*}
J_1 &= \Jac_{\rho|\cdot|^{x_{m}}, \dots, \rho|\cdot|^1} \circ \dots \circ \Jac_{\rho|\cdot|^{x_{1}}, \dots, \rho|\cdot|^1}, \\
J_2 &= 
\Jac_{\rho_t|\cdot|^{\half{d_r'-1}}, \dots, \rho_t|\cdot|^{\half{d_r+1}}}
\circ \dots \circ
\Jac_{\rho_1|\cdot|^{\half{d_1'-1}}, \dots, \rho_1|\cdot|^{\half{d_1+1}}}, 
\end{align*}
where we set 
$\Jac_{\rho|\cdot|^{x}, \dots, \rho|\cdot|^y} = D_{\rho|\cdot|^y}^{(1)} \circ \dots \circ D_{\rho|\cdot|^x}^{(1)}$.
Since $\phi_>$ contains neither $\rho$ nor $\rho \boxtimes S_3$, 
by the argument in the previous paragraph, 
we have
\[
\soc(Z_\rho[0,1]^t \rtimes \pi(\phi_>, \eta_>)) = L(\Delta_\rho[0,-1]^t; \pi(\phi_>, \eta_>)).
\]
By Theorem \ref{soc-positive}, using the assumption that $m \equiv 0 \bmod 2$, we see that 
\[
J_2 \circ J_1(L(\Delta_\rho[0,-1]^t; \pi(\phi_>, \eta_>)))
= L(\Delta_\rho[0,-1]^t; \pi(\phi, \eta)) = \pi.
\]
On the other hand, since 
\[
\pi(\phi_>,\eta_>) \hookrightarrow 
\Delta_{\rho}[x_1,1] \times \dots \times \Delta_\rho[x_{m},1] \rtimes J_1(\pi(\phi_>,\eta_>))
\]
by \cite[Lemma 5.7]{X1}, 
and since $Z_\rho[0,1] \times \Delta_\rho[x,1] \cong \Delta_\rho[x,1] \times Z_\rho[0,1]$  if $x \geq 1$, 
we see that
\begin{align*}
&J_2 \circ J_1 (\soc(Z_\rho[0,1]^t \rtimes \pi(\phi_>, \eta_>)))
\\&\hookrightarrow 
J_2 \circ J_1 (Z_\rho[0,1]^t \rtimes \pi(\phi_>, \eta_>))
\\&\hookrightarrow
J_2 \circ J_1 (\Delta_{\rho}[x_1,1] \times \dots \times \Delta_\rho[x_{m},1] 
\times Z_\rho[0,1]^t \rtimes J_1(\pi(\phi_>, \eta_>)))
\\&=J_2(Z_\rho[0,1]^t \rtimes J_1(\pi(\phi_>, \eta_>))).  
\end{align*}
Finally, since $(d_i+1)/2 > 2$ if $\rho_i \cong \rho$, we have
\[
J_2(Z_\rho[0,1]^t \rtimes J_1(\pi(\phi_>, \eta_>)))
= 
Z_\rho[0,1]^t \rtimes J_2 \circ J_1(\pi(\phi_>, \eta_>))
=
Z_\rho[0,1]^t \rtimes \pi(\phi, \eta_t).
\]
Therefore we conclude that $\pi \hookrightarrow Z_\rho[0,1]^t \rtimes \pi(\phi, \eta_t)$. 
This completes the proof of (4).
\end{proof}

Finally, we state the converse of Proposition \ref{der-01-special} in terms of $A$-parameters. 
\begin{cor}\label{soc-01-special}
Let $\pi = L(\Delta_\rho[0,1]^t; \pi(\phi, \eta))$ be the same as Proposition \ref{der-01-special}, 
and $D_{[0,1]}^{(k)}(\pi)$ be the highest $[0,1]$-derivative of $\pi$. 
Suppose that $k > 0$.
Then one can write $D_{[0,1]}^{(k)}(\pi) = L((\rho|\cdot|^{-1})^{s'}, \Delta_\rho[0,1]^{t'}; \pi(\phi', \eta'))$
with $s'+t'+m_{\phi'}(\rho \boxtimes S_3) \leq 1$. 
Moreover, with $m' \coloneqq m_{\phi'}(\rho)$, we have the following. 
\begin{enumerate}
\item
If $s'=1$, then $m' \geq 2$, $k \equiv 1\bmod 2$ and 
\[
\pi = \pi(\phi' - \rho^2 + (\rho \boxtimes S_2 \boxtimes S_2)^{k+1},m',\eta'). 
\]

\item
If $t'=1$, then $m' \equiv 1 \bmod 2$, $k \equiv 0 \bmod 2$ and
\[
\pi = \pi(\phi' + (\rho \boxtimes S_2 \boxtimes S_2)^{k+1},1,\eta'). 
\]

\item
If $m_{\phi'}(\rho \boxtimes S_3) = 1$, then $m' \equiv 1 \bmod 2$, $k \equiv 0 \bmod 2$ and
\[
\pi = \pi(\phi' + (\rho \boxtimes S_2 \boxtimes S_2)^{k},1,\eta'). 
\]

\item
If $s'+t'+m_{\phi'}(\rho \boxtimes S_3) = 0$, then 
\[
\pi = \pi(\phi' + (\rho \boxtimes S_2 \boxtimes S_2)^{k},m'+1,\eta'_k),  
\]
where $\eta_k'(\rho) = (-1)^{k}\eta'(\rho)$.
\end{enumerate}
\end{cor}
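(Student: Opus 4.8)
The plan is to invert, case by case, the four formulas of Proposition \ref{der-01-special}, treating the highest $[0,1]$-derivative $D_{[0,1]}^{(k)}(\pi)$ as given data and solving for the pair $(\phi,\eta)$ and the exponent $t$ that produced it. First I would record that the representation $\pi'' := D_{[0,1]}^{(k)}(\pi)$ is itself $Z_\rho[0,1]$-reduced and $\rho|\cdot|^1$-reduced (it is a highest derivative, and Lemma \ref{[01]-red}/Proposition \ref{010-1} apply), and that by the last part of Lemma \ref{[01]-red} it is also $\rho|\cdot|^{-1}$-reduced beyond the terms forced into its Langlands data. Writing its Langlands subrepresentation as $L((\rho|\cdot|^{-1})^{s'}, \Delta_\rho[0,-1]^{t'}; \pi(\phi',\eta'))$ with $\phi'\in\Phi_\temp(G)$, the four output shapes in Proposition \ref{der-01-special} — namely $\pi(\phi,\eta)$, $L(\rho|\cdot|^{-1};\pi(\phi+\rho-\rho\boxtimes S_3,\eta))$, $\pi(\phi-\rho\boxtimes(S_1+S_3),\eta_{t+1})$, $L(\rho|\cdot|^{-1};\pi(\phi+\rho^2,\eta))$, $L(\Delta_\rho[0,-1];\pi(\phi,\eta))$, and $\pi(\phi,\eta_t)$ — are mutually distinguished by the triple $(s',t',m_{\phi'}(\rho\boxtimes S_3))$; one checks quickly that in every case $s'+t'+m_{\phi'}(\rho\boxtimes S_3)\leq 1$, which also gives that first assertion of the corollary.

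Next I would match each value of $(s',t',m_{\phi'}(\rho\boxtimes S_3))$ to the branch of Proposition \ref{der-01-special} it can come from, and read off the parity constraints on $m'=m_{\phi'}(\rho)$ and on $k$. For instance, $s'=1$ can only arise from branch (2) with $t$ odd (giving $L(\rho|\cdot|^{-1};\pi(\phi+\rho-\rho\boxtimes S_3,\eta))$, but there $\phi$ contains $\rho\boxtimes S_3$ while $\phi+\rho-\rho\boxtimes S_3$ does not, so $m_{\phi'}(\rho\boxtimes S_3)=0$) or from branch (3) with $t>0$ even (giving $L(\rho|\cdot|^{-1};\pi(\phi+\rho^2,\eta))$); distinguishing these two by whether $m'=m_{\phi}(\rho)$ is odd (branch (3), since $\phi$ has odd $\rho$-multiplicity and we add $2$) versus even minus one shift is exactly the content that forces $m'\geq 2$, $k\equiv 1\bmod 2$. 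Then I invert: in the $s'=1$ branch coming from (3), $\phi' = \phi+\rho^2$ so $\phi = \phi'-\rho^2$, $t = t'+1$ is odd, $k = t-1 = t'$, and $\pi = L(\Delta_\rho[0,-1]^t;\pi(\phi,\eta)) = \pi(\psi,l,\eta)$ for the special $A$-parameter $\psi = \phi+(\rho\boxtimes S_2\boxtimes S_2)^t = (\phi'-\rho^2)+(\rho\boxtimes S_2\boxtimes S_2)^{k+1}$ with $l\equiv m'\bmod 2$; the translation $L(\Delta_\rho[0,-1]^t;\pi(\phi,\eta)) = \pi(\psi,l,\eta)$ is precisely the identification made at the start of \S\ref{sub:posgood} via Proposition \ref{moe}. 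The remaining three cases — $t'=1$ (from branch (3), $t$ odd, $t>1$), $m_{\phi'}(\rho\boxtimes S_3)=1$ (from branch (1), $t$ odd), and $s'+t'+m_{\phi'}(\rho\boxtimes S_3)=0$ (from branches (1) with $t$ even, (2), or (4)) — are handled by the same bookkeeping: solve for $\phi$ and $t$, check the parity of $k$, and rewrite $L(\Delta_\rho[0,-1]^t;\pi(\phi,\eta))$ in the $\pi(\psi,l,\eta)$ notation. The $\eta$-bookkeeping in case (4), where $\pi(\phi,\eta_t)$ appears with $\eta_t(\rho)=(-1)^t\eta(\rho)$, is what produces the $\eta_k'(\rho)=(-1)^k\eta'(\rho)$ twist in the final formula.

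The main obstacle I expect is the careful disentangling of the overlapping cases: several distinct $(\phi,\eta,t)$ can in principle produce Langlands data with the same shape indicator $(s',t',m_{\phi'}(\rho\boxtimes S_3))$, and one must use the parity of $m_{\phi'}(\rho)$ together with the defining reducedness constraints (the $\rho|\cdot|^1$-reducedness hypothesis on $\pi$, which in the presence of $\rho\boxtimes S_3\subset\phi$ forces $m_\phi(\rho\boxtimes S_3)=1$ and $\eta(\rho)\eta(\rho\boxtimes S_3)\neq(-1)^t$) to see that the inversion is unambiguous. A secondary subtlety is verifying that the resulting $\psi$ really is of good parity of the required form $\phi+(\rho\boxtimes S_2\boxtimes S_2)^{k'}$ with $k'\geq 1$ and that $\eta'$ (resp.\ $\eta_k'$) lies in the appropriate subgroup $\widehat{\Sc_{\psi,l}}$ of Proposition \ref{moe}; this is a direct check against the three bulleted conditions defining $\widehat{\Sc_{\psi,0}}$ and $\widehat{\Sc_{\psi,1}}$, using the parity relations on $k$ and $m'$ already established. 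Once those compatibility checks are in place, each of the four items follows by assembling the pieces, and nothing further than Propositions \ref{moe} and \ref{der-01-special} is needed.
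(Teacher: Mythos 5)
Your approach is exactly the paper's: the proof given there is literally ``This follows from Proposition \ref{der-01-special},'' i.e.\ the case-by-case inversion you describe, together with the translation $L(\Delta_\rho[0,-1]^t;\pi(\phi,\eta))=\pi(\psi,l,\eta)$ of Proposition \ref{moe} (with $x=1$), and your verification that the parity of $m'$ encodes the label $l$ is the right way to see that the four unified formulas of the corollary are consistent. One bookkeeping transposition to fix in the write-up: branch (1) of Proposition \ref{der-01-special} with $t$ \emph{even} gives $D_{[0,1]}^{(k)}(\pi)=\pi(\phi,\eta)$ with $\rho\boxtimes S_3\subset\phi'=\phi$, hence lands in case (3) of the corollary, while branch (1) with $t$ \emph{odd} gives the $s'=1$ case (where indeed $m_{\phi'}(\rho\boxtimes S_3)=0$); you have these two swapped, and consequently you also list ``branch (1) with $t$ even'' as a source of case (4), which it is not. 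With that parity corrected the case sorting is unambiguous and the rest of your plan goes through.
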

\begin{proof}
This follows from Proposition \ref{der-01-special}.
\end{proof}
\par

\section{Some examples of Zelevinsky--Aubert duality}
By the results in previous sections, we have completed Algorithm \ref{alg} to compute the Zelevinsky--Aubert duality.
In this section, we give some examples. 
Here we set $\rho \coloneqq \1_{\GL_1(F)}$, and we drop $\rho$ from the notation. 
For example, we write $\Delta[x,y] \coloneqq \Delta_\rho[x,y]$ and $Z[y,x] \coloneqq Z_\rho[y,x]$.
When $\phi = \oplus_{i=1}^r S_{d_i} \in \Phi_\gp(G)$ and $\eta(S_{d_i}) = \eta_i \in \{\pm1\}$, 
we write $\pi(\phi, \eta) = \pi(d_1^{\eta_1}, \dots, d_r^{\eta_r})$. 
\par

\subsection{Example 1}
Let us compute the Zelevinsky--Aubert dual of 
\[
L(\Delta[0,-2], \Delta[0,-1]; \pi(3^+)) \in \Irr(\Sp_{10}(F)).
\] 
Note that it is of good parity, and it is $|\cdot|^z$-reduced for $z \not= 0$ by Theorem \ref{soc-positive}.
By Algorithm \ref{alg}, we have the following commutative diagram:
\[
\xymatrix{
L(\Delta[0,-2], \Delta[0,-1]; \pi(3^+))
\ar@{|->}[d]_{D_{\Delta[0,-1]}^{(2)}} 
\ar@{|->}[rr]^{\pi \mapsto \hat\pi} 
&&
L(\Delta[0,-2], \Delta[0,-1]; \pi(3^+))
\\
L(|\cdot|^{-2}; \pi(3^+))
\ar@{|->}[d]_{D_{|\cdot|^{-2}}^{(1)}} \ar@{|->}[rr]^{\pi \mapsto \hat\pi} 
&&
L(\Delta[-1,-2]; \pi(1^+))
\ar@{|->}[u]_{S_{Z[0,1]}^{(2)}} 
\\
\pi(3^+)
\ar@{|->}[d]_{D_{|\cdot|^1}^{(1)}} \ar@{|->}[rr]^{\pi \mapsto \hat\pi} 
&&
L(|\cdot|^{-1}; \pi(1^+))
\ar@{|->}[u]_{S_{|\cdot|^2}^{(1)}} 
\\
\pi(1^+)
\ar@{|->}[rr]^{\pi \mapsto \hat\pi} 
&&
\pi(1^+)
\ar@{|->}[u]_{S_{|\cdot|^{-1}}^{(1)}} 
}
\]
For the computation of $S_{Z[0,1]}^{(2)}$, 
by Corollaries \ref{soc-01}, \ref{soc-01-special} and Theorem \ref{der-special}, we have 
\begin{align*}
S_{Z[0,1]}^{(2)}(L(\Delta[-1,-2]; \pi(1^+)))
&= 
\soc\left( \Delta[0,-2] \rtimes S_{|\cdot|^1}^{(1)} \circ S_{Z[0,1]}^{(1)}(\pi(1^+)) \right)
\\&=
\soc\left( \Delta[0,-2] \rtimes S_{|\cdot|^1}^{(1)}(\pi(1^-,1^-,3^+)) \right)
\\&=
L\left( \Delta[0,-2], \Delta[0,-1]; \pi(3^+) \right).
\end{align*}
In conclusion, we see that 
$L(\Delta[0,-2], \Delta[0,-1]; \pi(3^+))$ is fixed by the Zelevinsky--Aubert duality.

\subsection{Example 2}
Next, let us compute the Zelevinsky--Aubert dual of 
\[
\pi(1^\epsilon,1^\epsilon,3^+,5^-,5^-) \in \Irr_\temp(\Sp_{14}(F))
\]
for $\epsilon \in \{\pm\}$.
First, we compute derivatives: 
\[
\xymatrix@C=0.1pt{
\pi(1^+,1^+,3^+,5^-,5^-) 
\ar@{|->}[d]_{D_{|\cdot|^2}^{(1)}} 
&& 
\pi(1^-,1^-,3^+,5^-,5^-) 
\ar@{|->}[d]^{D_{|\cdot|^2}^{(1)}} 
\\
L(\Delta[1,-2]; \pi(1^+,1^+,3^+))
\ar@{|->}[d]_{D_{|\cdot|^1}^{(2)}} 
&&
L(\Delta[1,-2]; \pi(1^-,1^-,3^+))
\ar@{|->}[d]^{D_{|\cdot|^1}^{(1)}} 
\\
L(\Delta[0,-2]; \pi(1^+,1^+,1^+))
\ar@{|->}[d]_{D_{|\cdot|^2}^{(1)}} 
&&
L(\Delta[0,-2]; \pi(1^-,1^-,3^+))
\ar@{|->}[d]^{D_{\Delta[0,-1]}^{(1)}} 
\\
L(\Delta[0,-1]; \pi(1^+,1^+,1^+))
\ar@{|->}[d]_{D_{\Delta[0,-1]}^{(1)}} 
&&
L(|\cdot|^{-2}; \pi(1^-,1^-,3^+))
\ar@{|->}[d]^{D_{|\cdot|^{-2}}^{(1)}} 
\\
\pi(1^+,1^+,1^+),
&&
\pi(1^-,1^-,3^+).
}
\]
By Proposition \ref{temp}, 
we have $\hat\pi(1^+,1^+,1^+) = \pi(1^+,1^+,1^+)$ and $\hat\pi(1^-,1^-,3^+) = L(\Delta[0,-1]; \pi(1^+))$.
Next we compute socles: 
\[
\xymatrix@C=0.1pt{
\pi(1^+,1^+,1^+)
\ar@{|->}[d]_{S_{Z[0,1]}^{(1)}} 
&&
L(\Delta[0,-1]; \pi(1^+))
\ar@{|->}[d]^{S_{|\cdot|^2}^{(1)}} 
\\
\pi(1^-,1^-,1^-,1^-,3^+)
\ar@{|->}[d]_{S_{|\cdot|^{-2}}^{(1)}} 
&&
L(\Delta[0,-2]; \pi(1^+))
\ar@{|->}[d]^{S_{Z[0,1]}^{(1)}} 
\\
L(|\cdot|^{-2}; \pi(1^-,1^-,1^-,1^-,3^+))
\ar@{|->}[d]_{S_{|\cdot|^{-1}}^{(2)}} 
&&
L(\Delta[0,-2]; \pi(1^-,1^-,3^+))
\ar@{|->}[d]^{S_{|\cdot|^{-1}}^{(1)}} 
\\
L(\Delta[-1,-2], |\cdot|^{-1}; \pi(1^-,1^-,1^-,1^-,3^+))
\ar@{|->}[d]_{S_{|\cdot|^{-2}}^{(1)}} 
&&
L(\Delta[0,-2], |\cdot|^{-1}; \pi(1^-,1^-,3^+))
\ar@{|->}[d]^{S_{|\cdot|^{-2}}^{(1)}}
\\
L(\Delta[-1,-2], |\cdot|^{-2}, |\cdot|^{-1}; \pi(1^-,1^-,1^-,1^-,3^+)),
&& 
L(\Delta[0,-2], |\cdot|^{-2}, |\cdot|^{-1}; \pi(1^-,1^-,3^+)).
}
\]
Therefore, we conclude that
\begin{align*}
\hat\pi(1^+,1^+,3^+,5^-,5^-) &= L(\Delta[-1,-2], |\cdot|^{-2}, |\cdot|^{-1}; \pi(1^-,1^-,1^-,1^-,3^+)), \\
\hat\pi(1^-,1^-,3^+,5^-,5^-) &= L(\Delta[0,-2], |\cdot|^{-2}, |\cdot|^{-1}; \pi(1^-,1^-,3^+)). 
\end{align*}
Similarly, 
one can prove that $\hat\pi(3^+,5^-,5^-) = L(\Delta[-1,-2], |\cdot|^{-2}, |\cdot|^{-1} ; \pi(1^-,1^-,3^+))$. 
Hence we see that 
\begin{align*}
&\1_{\GL_1(F)} \rtimes L(\Delta[-1,-2], |\cdot|^{-2}, |\cdot|^{-1} ; \pi(1^-,1^-,3^+))
\\\cong&
L(\Delta[-1,-2], |\cdot|^{-2}, |\cdot|^{-1}; \pi(1^-,1^-,1^-,1^-,3^+))
\\&\oplus 
L(\Delta[0,-2], |\cdot|^{-2}, |\cdot|^{-1}; \pi(1^-,1^-,3^+)).
\end{align*}
In these computations, 
we also proved, for example, that $L(\Delta[0,-2]; \pi(1^-,1^-,3^+))$ is fixed by the Zelevinsky--Aubert duality.
This fact does not follow from results in \cite{At2}.
As in this example, 
even if $\pi$ is tempered, we need to compute $S_{Z[0,1]}^{(k)}$ in general. 
\par


\end{document}